\newcommand{\vertiii}[1]{{| \! | \! | #1 | \!
    | \! |}}
\def\be{\begin{equation}}
\def\ee{\end{equation}}
\def\bea{\begin{align}}
\def\eea{\end{align}}
\def\bea*{\begin{align*}}
\def\eea*{\end{align*}}
\def\mo{\mathfrak{o}}
\theoremstyle{plain}
\newtheorem{theorem}{Theorem}[section]
\newtheorem{corollary}{Corollary}
\newtheorem{proposition}{Proposition}
\theoremstyle{definition}
\newtheorem{definition}{Definition}
\newtheorem{example}{Example}
\newtheorem{remark}{Remark}
\DeclareMathOperator{\E}{\mathbb{E}}
\DeclareMathOperator{\F}{\mathbb{F}}
\DeclareMathOperator{\N}{\mathbb{N}}
\DeclareMathOperator{\Q}{\mathbb{Q}}
\DeclareMathOperator{\R}{\mathbb{R}}
\DeclareMathOperator{\calB}{\mathcal{B}}
\DeclareMathOperator{\calC}{\mathcal{C}}
\DeclareMathOperator{\calD}{\mathcal{D}}
\DeclareMathOperator{\calF}{\mathcal{F}}
\DeclareMathOperator{\calJ}{\mathcal{J}}
\DeclareMathOperator{\calL}{\mathcal{L}}
\DeclareMathOperator{\calM}{\mathcal{M}}
\DeclareMathOperator{\calN}{\mathcal{N}}
\DeclareMathOperator{\calO}{\mathcal{O}}
\DeclareMathOperator{\calR}{\mathcal{R}}
\DeclareMathOperator{\calS}{\mathcal{S}}
\DeclareMathOperator{\calT}{\mathcal{T}}
\DeclareMathOperator{\frakD}{\mathfrak{D}}
\DeclareMathOperator{\frakF}{\mathfrak{F}}
\DeclareMathOperator{\frakS}{\mathfrak{S}}
\DeclareMathOperator{\frakm}{\mathfrak{m}}
\newcommand*{\avint}{\mathop{\ooalign{$\int$\cr$-$}}}
\tikzset{
    myarr/.style={-stealth, thick},
    mylab/.style={font=\small},
}
\def\avint{\mathop{\,\, \rlap{--}\hspace{-1.7mm} \int \hspace{-1.6mm}}\nolimits} 
\def\avintin{\mathop{\,\, \rlap{--}\hspace{-1.05mm} \int \hspace{-1.2mm}}\nolimits} 
\title{\vspace{-10mm} 
Occupied Processes: Going with the Flow
\vspace{-3mm}}
\date{\today}
\author{Valentin Tissot-Daguette\footnote{Email: {\tt vtissotdague@bloomberg.net}. Research supported by the 2022-2023 and 2023-2024 Bloomberg Quantitative Finance Ph.D. Fellowship.  }}%
\affil
{\footnotesize Quantitative Research, Office of the  CTO, Bloomberg \vspace{-2mm} }
\begin{document}
\maketitle
\vspace{-1cm}

\vspace{0mm}
\begin{abstract}
 
A stochastic process $X$ becomes \textit{occupied} when it is enlarged with its occupation flow $\calO$ that tracks the time spent by the path at each level. When $X$  is Markov, the occupied process  $(\calO,X)$ enjoys a Markov structure as well. We develop an Itô calculus for occupied processes that lies midway between Dupire's functional Itô calculus and the classical version. We derive  Itô formulae and, through Feynman-Kac, unveil a broad class of path-dependent PDEs where $\calO$ plays the role of time.  The space variable, given by the current value of $X$,  remains finite-dimensional, thereby paving the way for standard elliptic PDE techniques and numerical methods. 

The framework's benefits are   illustrated via an  optimal stopping problem involving local times, followed by financial  applications. For the latter, we  show how occupation flows provide  unified Markovian lifts for exotic options and variance instruments, allowing financial institutions to  price derivatives books with a single numerical solver. We finally explore an extension of forward variance models so as to leverage the entire forward occupation surface.
\end{abstract}
\vspace{2mm}

\textbf{Keywords:}   Occupation  flow, 
Itô calculus, exotic derivatives, 
 variance instruments, path-dependent volatility 
\\ \vspace{-2mm}
	


\textbf{MSC (2020)}:  
60J55, 
60J25, 
60H30, 
60G40, 
91G20 

\vspace{13mm}

\textbf{Acknowledgments.} 
I would like to thank Bruno Dupire, Ibrahim Ekren, Julien Guyon, Yuxing Huang, Thibault Jeannin,  Bryan Liang, Jin Ma, Alexandre Pannier, Mete Soner, Nizar Touzi,  Jianfeng Zhang, and  Xin Zhang  for stimulating discussions. My gratitude goes also to two anonymous referees for their feedback and interesting ideas. 
 
\newpage
\setcounter{tocdepth}{2}
\tableofcontents

\newpage 

\section{Introduction}
This work  originates from an   identity  of \citet[Chapter VI]{RevuzYor}, 
\begin{equation}\label{eq:revuzYor}
    \sup_x L^x_T = \sup_{t\le T} L_t^{X_t}, 
\end{equation}
where $X$ is  Brownian motion and $L^x$ its local time process at $x$.   
We refer to $L_t^{X_t}$  as the \textit{spot local time}. 
To prove \eqref{eq:revuzYor}, let $x^*$ be a maximizer\footnote{The maximum is achieved since $x \mapsto  L_T^x$ has compact support and  admits a continuous version  \cite[Chapter VI]{RevuzYor}.} of  $x\mapsto L^x_T 
$   and define the random time $\tau^* = \sup\{t\le T \ | \ X_t = x^*\}$. Then $\sup_x L^x_T =  L_{\tau^*}^{X_{\tau^*}}$, which shows that $\sup_x L^x_T \le \sup_{t\le T} L_t^{X_t}$. The reversed inequality is immediate.  
One question arises: 
what if 
the anticipative strategy $\tau^*$ is replaced by stopping times?
I.e., what is the  optimal rule and value of the optimal stopping (OS)  problem, 
\begin{equation}\label{eq:OSLTIntro}
    \sup\{ \E[L_\tau^{X_\tau}] \ : \ \tau = \text{stopping time}\}?
\end{equation}
To our best knowledge, there is no  definitive approach in the literature to tackle stochastic control  or optimal stopping  problems involving local times as in \eqref{eq:OSLTIntro}. 
Our work addresses this by recovering a Markovian framework in these situations, classically leading  to considerable simplifications \cite{FlemingSoner,PeskirShiryaev}.  
This is achieved by exploiting the  fact that for continuous semimartingales, the local time field $(L_t^x)_{x\in \R}$ 
is 
the Radon-Nikodym derivative of the occupation measure of $X$,
\begin{equation}\label{eq:occMeasureIntro}
    \calO_t(A) = \int_0^t \mathds{1}_{A}(X_s)d\langle X \rangle_s, \quad A= \text{Borel set of } \R.  
\end{equation}
See \cite{GemanHorowitz}, \cite[Chapter VI]{RevuzYor}, and \cref{sec:occProcess}.  
The \textit{occupation flow} $\calO = (\calO_t)_{t\ge 0}$ is a measure-valued process adapted to the filtration of $X$. 
Moreover,  the \textit{occupied process} $(\calO,X)$ is Markov provided that $X$  is  Markov. 

The goal of this work is to develop a  differential calculus for occupied processes and demonstrate its relevance and practicality. 
The study of occupied processes strikes a middle ground  between the classical Itô calculus and the fully path-dependent setting introduced by  \citet{DupireFITO} (see also \cite{ContBally, ContFournie}).  
We shall see throughout that numerous 
examples in mathematical finance and  stochastic analysis   fit 
into our framework. 
Moreover, the occupation measure gives rise to  nice structural and regularity properties otherwise  unavailable in  pathwise calculi.

Introduced by \citet{Dawson75,Dawson93} and  \citet{FlemingViot}, measure-valued processes  have gained increasing attention  recently, e.g.,  in mean field games and  McKean-Vlasov controls \cite{CardDelLasLio,CarmonaDelarue,LasryLions,TTZ,TTZ2}, robust pricing and hedging   \cite{CoxKallblad,Larsson}, and infinite-dimensional polynomial diffusions \cite{CuchieroLarsson,Cuchiero2021}. The measure may represent the marginal law of a process 
or  
a conditional distribution 
which concentrates as the filtration unfolds \cite{CoxKallblad,Larsson}. 
Either way, the process takes values 
in a   suitable subspace  of  probability measures. 
In contrast, $\calO$ 
evolves  in the convex cone $\calM$ of finite positive  measures. The  meaning  behind  the occupation measure also greatly differs from flows of distributions: it is a pathwise object whose total mass  increases 
with the passage of time. Besides, occupation measures can be  added to one another. To study occupation functionals, 
this  presents   a clear advantage over  working in
the path space,  deprived of vector space structure.  

The study of functionals  of occupied processes and their dynamics appears to be new. 
In fact, the  occupation flow  is rarely the main object of interest and 
comes  rather
as a  
tool. Let us 
mention  the use of the expected occupation measures 
to formulate relaxed optimal stopping or stochastic control problems \cite{Bhatt,Tankov, FlemingVermes}. In contrast, the occupation flow is herein  given at  the outset, and may directly appear in the objective function of control problems as 
 exemplified by the optimal stopping problem \eqref{eq:OSLTIntro}. 
 Moreover, local times may control the dynamics of the system, as shown in the recent work by \citet{Bethencourt} which  adopted the framework developed here. The control problem considered therein relates to the evolution of filamentous fungi  \cite{tomaševićFungus,BoswellDavidson} offering another promising line of application for occupied processes.
  For the theoretical foundation of controlled occupied processes, we refer the interested reader to our companion  paper \cite{SonerTissotZhang}
 
\subsection{Contributions}
In \cref{thm:ItoOX},  we generalize Itô's formula to \textit{occupation functionals} $ f(\calO_t,X_t)$  and discover, to our fortunate  
surprise, that the expression is almost identical to  the classical case. The only difference is that the time derivative  in Itô's formula for functions $ f(t,X_t)$ 
is replaced by the \textit{occupation derivative} $\partial_{\mo}f(\calO_t,X_t) = \delta_{\mo}f(\calO_t,X_t)(X_t)$ where $\delta_{\mo}$ 
is the linear derivative \cite[Chapter 2]{CardDelLasLio} and  $\mo$ is the occupation flow variable which takes values in the space $\calM$ of finite measures. The projection of  $\delta_{\mo}f(\calO_t,X_t) \in \calC(\R)$ onto the spot  $X_t$ comes from  the singular dynamics of  the occupation measure   
$d\calO_t = \delta_{X_t} d\langle X \rangle_t$, where $\delta_x$ denotes the Dirac mass at $x$.  
 We are  then able to extend Itô's formula in \cref{thm:ItoOX} to occupied continuous semimartingales, namely
\begin{equation}\label{eq:Itointro}
    df(\calO_t,X_t) = \Big(\partial_{\mo} + \frac{1}{2} \partial_{xx}\Big)f(\calO_t,X_t) d\langle X \rangle_t + \partial_x f(\calO_t,X_t)dX_t.
\end{equation}
 To compare the above formula with other It\^o calculi, we shall introduce the \textit{calendar time occupation flow} 
$\tilde{\calO}(A) = \int_0^{\cdot} \mathds{1}_{A}(X_s)ds,$ leading to the more familiar expression
\begin{equation}\label{eq:ItointroStd}
    df(\tilde{\calO}_t,X_t) = \partial_{\mo}f(\tilde{\calO}_t,X_t) dt +  \frac{1}{2} \partial_{xx}f(\tilde{\calO}_t,X_t) d\langle X \rangle_t + \partial_x f(\tilde{\calO}_t,X_t)dX_t.
\end{equation} 
We can then show that  \eqref{eq:ItointroStd} is consistent with  Dupire's \textit{functional Itô formula} \cite{DupireFITO} and the classical one. Precisely, the occupation derivative $\partial_{\mo}$ replaces   the (functional) time derivative. We then apply It\^o's formula in the case where $(\calO,X)$, $(\tilde{\calO},X)$ is solution of  \textit{occupied} stochastic differential equations (OSDEs),
\begin{align}
     dX_t  &=   b(\calO_t,X_t)dt + \sigma(\calO_t,X_t)dW_t, \qquad d\calO_t =  \delta_{X_t} \sigma(\calO_t,X_t)^2dt\label{eq:OSDEIntro} \\[0.5em] 
  dX_t  &=   b(\tilde{\calO}_t,X_t)dt + \sigma(\tilde{\calO}_t,X_t)dW_t, \qquad \ d\tilde{\calO}_t =  \delta_{X_t} dt\label{eq:OSDEIntroStd} 
\end{align}
where their wellposedness is proved in \cref{sec:OSDE}. OSDE \eqref{eq:OSDEIntroStd} shares similarities with \textit{self-interacting diffusions} \cite{DurrettRogers, CranstonLeJan, Raimond, Benaim1,Benaim2,Benaim3,Benaim4}  where the drift uses the  calendar time occupation flow to create mean reversion or repulsion. 

Applying \eqref{eq:ItointroStd} to $f(\tilde{\calO}_t,X_t) = \E^{\Q}[\varphi(\tilde{\calO}_T,X_T) | \calF_t]$ where 
$(\tilde{\calO},X)$ evolves according to \eqref{eq:OSDEIntroStd},  
we then obtain by Feynman-Kac's formula (\cref{thm:FKDirichletStd}) that $f$ solves  the  backward  PDE (possibly in a weak sense)  
\begin{equation}\label{eq:PDEINTRO}
    \Big(\partial_{\mo} + b \partial_x + \frac{\sigma^2}{2}\partial_{xx}\Big) u(\mo,x) = 0, \;  \mo(\R) < T, \quad  u(\mo,x) = \varphi(\mo,x), \;   \mo(\R) = T.  
\end{equation}

Under the stochastic  clock $\calO_t(\R) = \langle X \rangle_t$, one obtains a Dirichlet problem instead; see  \cref{thm:FKDirichlet}. 
We can therefore recast a large class of path-dependent PDEs  involving the occupation derivative $\partial_{\mo}$ in lieu of the usual time derivative $\partial_t$.  
As the space derivatives in \eqref{eq:PDEINTRO} are the classical ones, classical results in viscosity theory can be leveraged to  study  a large class of (possibly nonlinear)  path-dependent PDEs; see the accompanying paper \cite{SonerTissotZhang}. 
 
Proving the   regularity of solutions of path-dependent PDEs is notoriously hard  \cite{ZhangEkren2014,PengWang}  (see also \cite[Chapter 11]{ZhangBook}) and still a subject of active research. 
 In a recent work, \citet{BouchardTan} study  linear parabolic PPDEs for functionals depending on  $X_t$ and  $I_t = \int_0^t X_s dA_s$ for some continuous function $A$   of finite variation. Note that their setting  relates to ours as the integral process $I$ can be retrieved from the occupation measure in some situations. Indeed, if    $A_t := \langle X \rangle_t$  is deterministic (e.g., when $X$ is a Gaussian martingale) and  fulfills the conditions in \cite{BouchardTan}, then  $I_t  = \int_{\R} x \calO_t(dx)$. 

Effective state enlargement ensures that Markovian representations of conditional expectations are both regular and numerically tractable; see 
\cite{ViensZhang} for Gaussian Volterra processes or  the \textit{Better Asian PDE} example  in \cite{DupireFITO}.  
While  the  path functional $$\textnormal{v}(t,\omega) = \E^{\Q}[\varphi(\calO_T,X_T) | \calF_t](\omega), \quad \omega \in \calD([0,T], \R), $$ 
  trivially induces a Markovian framework,  this representation is often impractical. Moreover,  the continuity in $\omega$ of \textnormal{v} is typically not guaranteed: we shall see in \cref{ex:heatspotLT}, related to spot local times,  that   \textnormal{v}   is  discontinuous in the input path while  $v$ is smooth. This further supports choosing a more adequate Markovian lift, like  $(\calO,X)$.

\subsection{Applications}

The  OS problem \eqref{eq:OSLTIntro} 
is  analyzed in \cref{sec:spotLT}. After extending standard tools  to our context, we shed light in  on  the numerical resolution of \eqref{eq:OSLTIntro} using 
a least square Monte Carlo approach  \cite{LS}.  
The stopping of spot local time  reveals great challenges  and comes as a novel  benchmark for numerical methods solving  path-dependent OS  problems such as \cite{Bayer,bayraktar,Becker}.

Section \ref{sec:finApp} is dedicated to  financial applications. We shall see that the occupation measure is ever-present, particularly  in the eponymous occupation time derivatives \cite{Detemple,Hugonnier,ChesneyJeanblancYor}, timer options, and corridor variance swaps.  
It is a challenge for financial institutions to consistently price and manage complex derivatives books, and  we show in  \ref{sec:finApp} that occupation flows offer a unified framework for this task.  We finally propose a generalization of forward variance models in \cref{sec:FwdOccupation},  opening the doors to \textit{nonlinear} corridor variance instruments.

The calendar time occupied SDE \eqref{eq:OSDEIntroStd} can also be applied to model stock price dynamics directly. This is the focus of our companion paper \cite{TissotLOV} that introduces  \textit{local occupied volatility} (LOV) models, at the intersection of  local volatility \cite{DupireLV} and  path-dependent volatility  \cite{Guyon2014,GuyonSlides,GuyonLekeufack,HobsonRogers}.

\section{Occupied Processes}\label{sec:occProcess}  

We restrict ourselves to one space dimension to simplify the exposition. 
  Let $\lambda$ be the Lebesgue measure on  $\R$. We may write $\int f d\lambda$ or $\int f(x)  dx$, whichever is more convenient. We also write $\calB(\R)$ for the Borel $\sigma-$algebra on $\R$  and $\calM$ for the set of finite, positive Borel measures on $\R$. 
Given $\mu \in \calM$,  $\avintin_A f d\mu = \frac{1}{\mu(A)}\int_A f d\mu$ denotes the average of $f$ with respect to $\mu$  inside $A\in \calB(\R)$.  We   write $B_{\varepsilon} = (-\varepsilon,\varepsilon)$ for the open ball of radius $\varepsilon$ centered at the origin and $B_{\varepsilon}(x) = x + B_{\varepsilon}$. Finally,  int$A$, $\partial A$, $\bar{A}$ denote the interior, boundary, and closure of a set $A$, respectively.

Let $X$ be a real-valued continuous semimartingale 
on a probability space $(\Omega,\calF,\F,\Q)$ where $\F = (\calF_t)_{t\ge 0}$ is the natural filtration of $X$. The  \textit{occupation measure  of $X$} at $t\ge 0$ is defined as  
\begin{equation}\label{eq:occMeasure}
    \calO_t(\omega): \calB(\R) \to \R, \quad  \calO_t(\omega)(A) =\int_0^t\mathds{1}_A(X_s
)d\langle X \rangle_s(\omega), \quad A \in \calB(\R), \quad \omega \in \Omega. 
 \end{equation}
Unless stated otherwise,  we  omit the dependence on $\omega$ throughout and write $\calO_t(A)$ for the random variable $\omega \mapsto \calO_t(\omega)(A)$. 
\begin{definition}
The collection of  occupation measures $\calO = (\calO_t)_{t\ge 0}$  is called the \textit{occupation flow of $X$}. Moreover, the pair $(\calO,X)$ is  an \textit{occupied process}, and takes values in the product space $\calD  : =  \calM \times \R$. 
\end{definition}
The  occupation flow can also be regarded as a measure-valued process; see  \cite{Dawson75,Dawson93,FlemingViot}. 
\begin{proposition}\label{prop:OccProperties} 
The occupation flow $\calO$ satisfies the following properties. 
    \begin{itemize}
        \item[\textnormal{I.}] \textnormal{(Strong Time Additivity)} 
        For all stopping times $\varsigma \le \tau$ and $A \in \calB(\R)$,  
\begin{equation}\label{eq:additive}
    \calO_{\tau}(A) = \calO_{\varsigma}(A) + \calO_{\varsigma,\tau}(A), \quad \Q-a.s., 
\end{equation}
with the incremental flow
$\calO_{\varsigma,\tau}(A) := \int_\varsigma^\tau \mathds{1}_{A}(X_s)d\langle X \rangle_s$. 

        \item[\textnormal{II.}] \textnormal{(Absolute Continuity)} $\calO_t \ll \lambda$ $ \ \Q-$a.s., where $\lambda$ is the Lebesgue measure on $\R$. 
             \item[\textnormal{III.}] \textnormal{(Occupation Time Formula)} For every Borel function $\phi:\R\to \R_{+}$, 
\begin{equation}\label{eq:OTF}
    \int_0^t \phi(X_s)d\langle X \rangle_s = \int_{\R} \phi(x)\calO_t(dx).  
\end{equation}
             
    \end{itemize}
\end{proposition}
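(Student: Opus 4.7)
The plan is to dispatch the three items in an order that exploits their dependencies, namely III first (since it is essentially a restatement of the definition), then II (which follows from III combined with the existence of local times), and finally I (which is purely pathwise).

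For \textnormal{III}, the \emph{Occupation Time Formula} is obtained by the standard monotone class argument. For indicator functions $\phi = \mathds{1}_A$ with $A \in \calB(\R)$, both sides equal $\calO_t(A)$ by the very definition \eqref{eq:occMeasure}. Linearity extends the identity to nonnegative simple Borel functions, and the monotone convergence theorem applied to an increasing sequence of such simple functions yields the identity for arbitrary Borel $\phi : \R \to \R_+$.

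For \textnormal{II}, the \emph{Absolute Continuity} property follows by invoking the classical existence theorem for local times of continuous semimartingales (e.g., Revuz–Yor, Ch.~VI, Thm.~1.7), which provides a jointly measurable field $(L^x_t)_{x\in\R,t\ge0}$ such that, $\Q$-a.s.,
$$
\int_0^t \phi(X_s)\,d\langle X\rangle_s \;=\; \int_{\R} \phi(x) L^x_t\,dx
$$
for every nonnegative Borel $\phi$. Specializing to $\phi = \mathds{1}_A$ and comparing with III yields $\calO_t(A) = \int_A L^x_t\,dx$, so that $L^{\cdot}_t$ is a Radon–Nikodym density of $\calO_t$ with respect to $\lambda$, proving $\calO_t \ll \lambda$ almost surely.

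For \textnormal{I}, the \emph{Strong Time Additivity} is purely pathwise: fixing $A\in\calB(\R)$ and stopping times $\varsigma \le \tau$, the Lebesgue–Stieltjes integral of $\mathds{1}_A(X_\cdot)$ against the continuous increasing process $\langle X\rangle$ splits at $\varsigma$, giving
$$
\calO_{\tau}(A) \;=\; \int_0^{\varsigma} \mathds{1}_A(X_s)\,d\langle X\rangle_s + \int_{\varsigma}^{\tau} \mathds{1}_A(X_s)\,d\langle X\rangle_s \;=\; \calO_{\varsigma}(A) + \calO_{\varsigma,\tau}(A),\quad \Q\text{-a.s.}
$$

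The only subtlety I foresee is the order of quantifiers: in I and II, the exceptional null sets a priori depend on $A$. Since $\calB(\R)$ is countably generated, one fixes a countable generating algebra, controls a single $\Q$-null set on which the identities fail on that algebra, and then extends to all Borel sets via a monotone class / $\pi$–$\lambda$ argument using the fact that both sides are $\sigma$-additive in $A$. This bookkeeping is the most delicate part but is standard.
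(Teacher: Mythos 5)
Your argument is correct and is essentially the paper's approach: the paper simply cites the classical results in Revuz--Yor (Ch.~VI and X) and Geman--Horowitz, and your proof unpacks exactly those facts --- III is the pushforward/monotone-class identity built into the definition of $\calO_t$, II follows from the classical density-of-occupation (local time) theorem compared with III, and I is pathwise additivity of the Lebesgue--Stieltjes integral at the random time $\varsigma$. One minor remark: the null-set bookkeeping you flag at the end is not actually needed, since the splitting in I holds pathwise for every $A$ simultaneously, and the classical occupation-density formula already comes with a single exceptional null set uniform over all nonnegative Borel $\phi$ and all $t$.
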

\begin{proof}
See I. \cite[X]{RevuzYor}; II.  \cite[VI]{RevuzYor} or \cite{GemanHorowitz}; III. \cite[VI]{RevuzYor}.
\end{proof}
The Radon-Nikodym derivative of $\calO_t$ is the  \textit{local time field}  $L_t = (L_t^x)_{x\in \R} $, 
\begin{equation}\label{eq:LTdef}
     L_t^x = \frac{d\calO_t}{d\lambda}(x) =   \lim_{\varepsilon\to 0} \frac{1}{2\varepsilon}\int_0^t\mathds{1}_{B_{\varepsilon}(x)}(X_s) d\langle X \rangle_s. 
\end{equation} 
Evidently, the (strong) time additivity of $\calO$  carries over to $L$, that is $
    L_\tau = L_{\varsigma} + L_{\varsigma,\tau}$ with $L_{\varsigma,\tau} := \frac{d\calO_{\varsigma,\tau}}{d\lambda}$ for all stopping times $\varsigma \le \tau$.
 The local time 
 is in fact central 
in the theory of \textit{additive functionals}   \cite{BlumenthalGetoor}, \cite[Chapter X]{RevuzYor}. 
For local martingales,  it is well-known that the process $L_t= (L_t^x)_{x\in \R}$  
(indexed by $x$) 
admits a version that is  $\alpha-$H\"older continuous for all $\alpha <1/2$. For general semimartingales,  $x\mapsto L_t^x$ may only shown to be càdlàg \cite[Chapter VI]{RevuzYor}.  
Finally, note that  $\calO_t(\R) = \langle X \rangle_t$, i.e.,  the total mass of  $\calO$ coincides with the stochastic clock of $X$. 
 
Introduce also the space of compactly supported,   absolutely continuous Borel measures, 
\begin{equation}\label{eq:M}
     \calM^{\lambda} = \big\{\mo \in \calM \ : \ \mo \ll \lambda \text{ and }  \frac{d\mo}{d\lambda}:\R\to \R \text{ has compact support} \big\}.
\end{equation}
 Clearly,  $\calO_t \in \calM^{\lambda} \ \Q-$a.s. from \cref{prop:OccProperties} II. and  the compactness of $\{X_s :s\le t\}=\text{supp}(\calO_t)$  
 as $X$ is continuous.  
Note that $\calM^{\lambda}$ and  $\calM$ are convex cones. 
In particular, occupation measures can be  added to one another. 
This vector space structure  is 
 advantageous over  
a pathwise setting 
where the addition of  paths on different horizons cannot be reasonably defined. 

\subsection{Calendar Time Occupation Flow}\label{sec:calendarTimeOcc}
Let us  also introduce the \textit{calendar time occupation flow}, 
\begin{equation}\label{eq:stdTimeoccupation}
    \tilde{\calO}_t(A) :=\int_0^t\mathds{1}_A(X_s
)ds, \quad A \in \calB(\R), \quad t\in [0,T], 
     \end{equation}  
     and \textit{calendar time occupied process} $(\tilde{\calO},X)$. 
     Note that $\tilde{\calO}$ also satisfies properties I., III. in \cref{prop:OccProperties}, but not II (absolute continuity). Indeed, $\tilde{\calO}$ may generate atoms if $X$ is constant on a time interval of positive length. 
     Hence $\tilde{\calO}_t \in \calM \setminus \calM^{\lambda}$ in general, so there may not exist an associated local time process.  
     On the other hand, 
     the calendar time occupation measure is well-defined  for paths of any roughness 
     and nontrivial for 
     smooth trajectories. 
Therefore, $\tilde{\calO}$ is more universal than $\calO$ and may be  preferred in some applications;  see Section  \ref{sec:occPayoffs} and \cite{TissotLOV}.  

\subsection{Occupation Flow Dynamics}\label{sec:occFlowDynamics}

By definition, occupation flows accumulate mass at the levels visited by  $X$. As the latter is  continuous, the flow increases only in a neighborhood of the current value  within a short  time window. 
Indeed, as  $d\calO_t(A) = \mathds{1}_{A}(X_t)d\langle X \rangle_t$,  the occupation time $\calO_t(A)$ increases at $t$ if and only if $X_t\in A$. Noting also that $\mathds{1}_{A}(X_t) = \delta_{X_t}(A)$ leads to  the following dynamics 
\begin{align}
   d\calO_t = \delta_{X_t}d\langle X \rangle_t. \label{eq:flowDynamics} 
\end{align} 
Similarly,  the calendar time occupation flow  evolves according to   
\begin{align}
  d\tilde{\calO}_t = \delta_{X_t}dt, \label{eq:flowDynamicsStd} 
\end{align}
Occupation flows thus admit singular dynamics  that involves a Dirac impulse at the current level. 
See \cref{fig:dynamicO}.

 \begin{figure}[H]
\centering
\caption{Occupation flow  $\calO$ and its dynamics $d\calO_t = \delta_{X_t}d\langle X\rangle_t$. }
  \includegraphics[height=2.3in,width=3.2in]{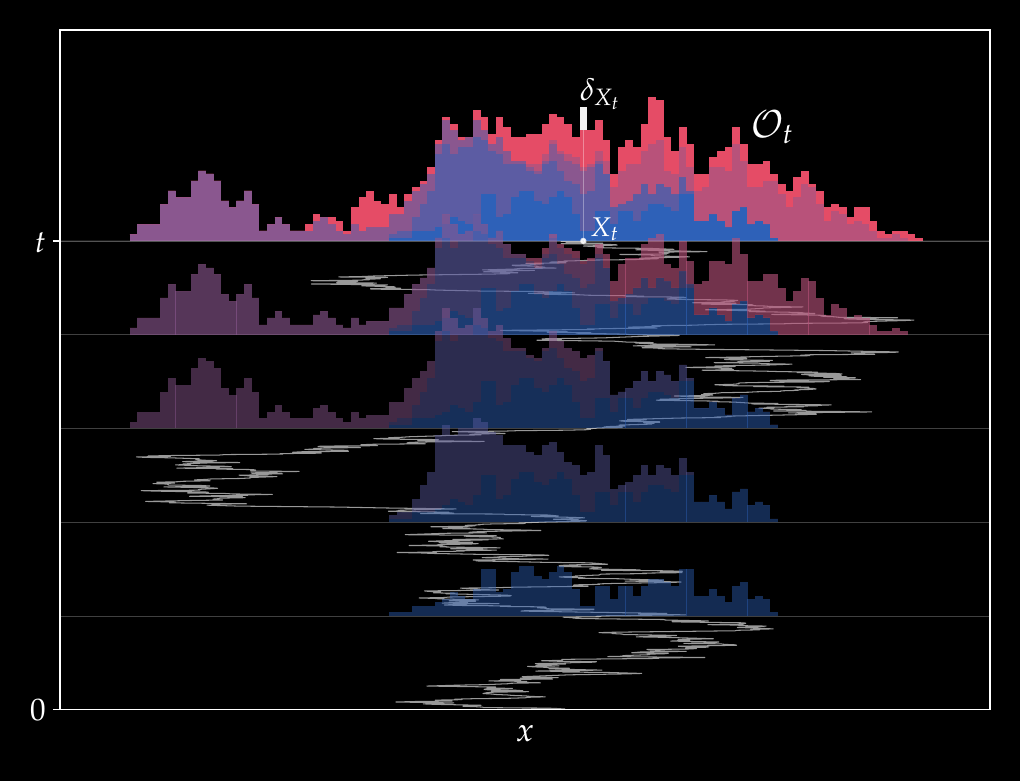}
 \label{fig:dynamicO}
 \end{figure}

 \subsection{Occupation Functional and Markov Property}

An \textit{occupation functional} is a map $f: \calD \to \R$, $(\mo,x) \mapsto f(\mo,x)$. 
Naturally, we are  interested in plugging in  occupied processes   and studying properties of  $f(\calO_t,X_t)$ or $f(\tilde{\calO}_t,X_t)$.  
We stress that occupation functionals do not explicitly depend on the time variable because  occupation flows already play  that role. 
This simplification   will  reveal its  grace in \cref{sec:ITOMAIN}.

We now establish the (strong) Markov property of occupied processes. 

\begin{proposition}\label{prop:Markov}
Let $X$ be a strong Markov process. Then the occupied processes $(\calO,X)$,  $(\tilde{\calO},X)$ are  strong Markov as well. 
\end{proposition}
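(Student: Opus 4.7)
The plan is to deduce the strong Markov property of $(\calO, X)$ directly from the strong time additivity of the occupation flow (\cref{prop:OccProperties} I.) and the strong Markov property of $X$. Fix a stopping time $\tau$, a horizon $s \geq 0$, and a bounded measurable functional $\varphi: \calD \to \R$. Strong time additivity yields the decomposition
$$\calO_{\tau+s} = \calO_\tau + \calO_{\tau, \tau+s}, \qquad \calO_{\tau,\tau+s}(A) = \int_\tau^{\tau+s}\mathds{1}_A(X_u) \, d\langle X \rangle_u,$$
in which the first summand $\calO_\tau$ is $\calF_\tau$-measurable (and thus behaves as a known ``initial condition'' after conditioning), while the increment $\calO_{\tau, \tau+s}$ depends only on the post-$\tau$ trajectory of $X$.

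I would then observe that $(\calO_{\tau, \tau+s}, X_{\tau+s})$ is a measurable functional of the shifted path $(X_{\tau+u})_{u \in [0,s]}$, since $\langle X\rangle$ can be recovered pathwise as a quadratic-variation limit along dyadic partitions. The strong Markov property of $X$ then implies that the conditional law of this shifted path given $\calF_\tau$ depends only on $X_\tau$. Combining this with the $\calF_\tau$-measurability of $\calO_\tau$ yields
$$\E\bigl[\varphi(\calO_{\tau+s}, X_{\tau+s}) \,\big|\, \calF_\tau\bigr] = \psi(\calO_\tau, X_\tau), \qquad \psi(\mo, x) := \E^{x}\bigl[\varphi(\mo + \calO_s, X_s)\bigr],$$
which is precisely the strong Markov property for $(\calO, X)$ on the state space $\calD = \calM \times \R$. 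The case of $(\tilde{\calO}, X)$ is treated identically: one replaces $d\langle X\rangle_u$ by $du$, and the required additivity of $\tilde{\calO}$ is recorded in \cref{sec:calendarTimeOcc}.

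The main obstacle is a measurability issue: one must verify that the map $(X_{\tau+u})_{u \in [0,s]} \mapsto \calO_{\tau, \tau+s}$ is measurable when $\calM$ is equipped with the natural $\sigma$-algebra generated by the evaluation maps $\mo \mapsto \mo(A)$, $A \in \calB(\R)$. This reduces by a monotone class argument to checking the case where $A$ is an interval, which in turn follows from the pathwise approximation of $\int_\tau^{\tau+s} \mathds{1}_A(X_u) \, d\langle X\rangle_u$ by Riemann-Stieltjes sums along the dyadic partition of $[\tau, \tau+s]$. Once measurability is in place, the remainder of the argument is a routine application of the defining property of strong Markovianity.
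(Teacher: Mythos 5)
Your proof is correct and follows essentially the same route as the paper: decompose $\calO_{\tau+s}=\calO_\tau+\calO_{\tau,\tau+s}$ via strong time additivity and use the strong Markov property of $X$ to see that the post-$\tau$ increment (and $X_{\tau+s}$) is conditionally independent of $\calF_\tau$ given $X_\tau$, so the conditional expectation is a function of $(\calO_\tau,X_\tau)$ only. The only difference is that you additionally spell out the measurability of $\calO_{\tau,\tau+s}$ as a functional of the shifted path and write the transition kernel explicitly, details the paper leaves implicit.
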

\begin{proof}
This follows from 
the strong time additivity  of the occupation measure (\cref{prop:OccProperties} I): for  
all stopping times $0\le \tau\le \varsigma \le T$ and occupation functional   $f:\calD \to \R$, \begin{align*}
    \E^{\Q}[f(\calO_{\varsigma},X_{\varsigma}) \ |  \calF_{\tau}]
    =  
    \E^{\Q}[f(\calO_{\tau}+\calO_{\tau,\varsigma},X_{\varsigma}) \ |   \calF_{\tau}]
     = \E^{\Q}[f(\calO_{\varsigma},X_{\varsigma}) \ |  (\calO_\tau,X_\tau)], 
\end{align*} 
noting that  $\calO_{\tau,\varsigma}$ (and $X_{\varsigma}$) is conditionally independent of $\calF_{\tau}$ given $X_{\tau}$. The same arguments apply to the calendar time occupied process. 
\end{proof}

\begin{remark}\label{rem:DDS}
 Suppose that  $X$ is a continuous  local martingale with Dambins-Dubins-Schwartz (DDS) Brownian motion
 $$W_\theta := X_{\tau_\theta}, \qquad 
\tau_\theta = \inf\{t \in [0,T] \ : \ \langle X \rangle_t > \theta\},\qquad \theta \le \Theta :=  \langle X\rangle_T , $$ or conversely $X_t = W_{\theta_t}$, $\theta_t := \langle X\rangle_t$. If $\calO^X$ (respectively $\calO^W$) denotes the occupation flow of $X$ (resp. $W$), it is easily seen that $\calO_t^X(\omega)=\calO_{\theta_t}^W(\omega)$ for all $\omega \in \Omega$.   
 In particular, for every  $\F-$stopping time $\tau$, and functional $f:\calD\to \R$,
 \begin{equation}
  f(\calO_\tau^{X},X_\tau) = f(\calO_{\theta_{\tau}}^W,W_{\theta_{\tau}}). 
 \end{equation}
Note that $\theta_{\tau}$ is a stopping time under the Brownian filtration $\F^W = (\calF_{\tau_\theta})_{\theta\le \Theta}$. This observation will be particularly useful  in \cref{sec:spotLT} to derive simple 
formulation of 
optimal stopping problems.   
See also \cite[Chapter IV]{PeskirShiryaev}.      
\end{remark}

\subsection{Metrics on Space of Measures}\label{sec:metrics}
In connection with local time, we introduce  
 \begin{equation}\label{eq:metrics}
    \frakm_p(\mo,\mo') :=  \lVert \mo-\mo' \rVert_{p}, \quad \lVert \mo \rVert_{p} := \left\lVert \frac{d\mo}{d\lambda} \right\rVert_{L^{p}(\R)}, \quad \mo,\mo' \in \calM^{\lambda}, \quad p\in [1,\infty].  
 \end{equation}
 Since $\frac{d\mo}{d\lambda}$ is bounded and compactly supported for each $\mo\in \calM^{\lambda}$, then $\lVert \mo\rVert_p <\infty$  $\forall \ p\in [1,\infty]$. 
 Note that the distance between any two measures $ \calM^{\lambda}$ makes sense irrespective of their respective total mass. 
This is particularly convenient when comparing the occupation measure 
at different times, to wit,  
$$  \frakm_p(\calO_s,\calO_t) = \lVert\calO_{s,t}\rVert_p  = \lVert L_{s,t}\rVert_{L^{p}(\R)}, \quad  s<t. $$
Important values for $p$ are 
$p\in \{1,\infty\}$. First,  $\frakm_1$ 
coincides with the total variation distance of finite measures, so it can  be extended to $\calM$ and used for the calendar time occupation flow as well. Moreover, for all $s\le t,$  $\frakm_1(\calO_s,\calO_t) = \lVert \calO_{t-s} \rVert_1$ is conveniently the total mass of $ \calO_{t-s}$.  
For these reasons, we shall endow $\calM^{\lambda}$ (and $\calM$)  with $\frakm_1$   when establishing Itô's formula in \cref{sec:ITOMAIN}. 
Second,  the supremum distance $\frakm_{\infty}$ is pivotal to  
the spot local time in \cref{sec:spotLT}. 
Indeed, a strong metric is required to guarantee the continuity of $\mo \mapsto  \varphi(\mo,x) = \frac{d\mo}{d\lambda}(x)$ (hence $  \varphi(\calO_t,X_t)  = L_t^{X_t}$). 
Note that 
$\lVert\calO_{t}\rVert_{\infty} = \sup_{x\in \R} L^x_t$, which admits well-known bounds in $L^p(\Q)$; 
see \cite[XI]{RevuzYor}, \cite{BarlowYor}, and references therein.  

Besides, we shall use in \cref{sec:OSDE} the  \textit{bounded Lipschitz}  \textit{distance} \cite{Hanin} to prove  
   the existence and uniqueness of solutions of \textit{occupied} stochastic differential equations SDEs.  This metric proves    
   more suitable than 
   $\frakm_p$ due to its dual representation.

\section{It\^o Calculus}\label{sec:ITOMAIN}


\subsection{Occupation Derivative}\label{sec:occDerivative}
We seek a suitable 
differential operator, say $\delta_\mo :\calC^1(\calM) \to \calC(\R)$, with the class  $\calC^1(\calM)$ still to be defined,  
such that $f(\calO_t)$ satisfies the chain rule
\begin{equation}\label{eq:chainRule}
    df(\calO_{t})  = \delta_\mo f(\calO_t) \cdot  d\calO_t, \quad f\in \calC^{1}(\calM),
\end{equation}
with the dual pairing 
\begin{equation}\label{eq:scalar}
    \phi \cdot \mo := \int_{\R} \phi(x)\mo(dx), \quad \phi:\R\to \R, \quad \mo \in \calM. 
\end{equation}
Plugging  the occupation flow dynamics $d\calO_t = \delta_{X_t}d\langle X \rangle_t$ into \eqref{eq:chainRule}, 
\begin{equation}\label{eq:dynamicsf2}
    df(\calO_t) = \delta_{\mo}f(\calO_t) \cdot d\calO_t = \delta_\mo f(\calO_t) \cdot \delta_{X_t}d\langle X \rangle_t = \delta_\mo f(\calO_t)(X_t)d\langle X \rangle_t. 
\end{equation}
It is natural to impose that \eqref{eq:dynamicsf2}   holds  for all linear maps $f(\mo) = \phi\cdot \mo$, $\phi \in \calC(\R)$. From the occupation time formula \eqref{eq:OTF}, observe that  $f(\calO_t) = \int_0^t \phi(X_s) d\langle X \rangle_s$. Together with  \eqref{eq:dynamicsf2},  this implies that  
\begin{equation}\label{eq:linearFctalCond}
     \delta_{\mo} (\mo \mapsto \phi \cdot \mo) \equiv \phi, 
\quad \forall \phi\in \calC(\R).
\end{equation}
Hence $\delta_{\mo}$ must be  the  \textit{linear derivative} 
\cite{CardDelLasLio,CarmonaDelarue}. 
Although typically used for probability measures, we  effortlessly extend its definition  to arbitrary (finite) measures. 
In what follows, we   endow  $\calM$ 
with the total variation metric $\frakm_1$  in \cref{sec:occProcess}. 
We also extend the dual pairing in \eqref{eq:scalar} to signed measures $\mo = \mo^+ - \mo^-$, $\mo^+, \mo^- \in \calM$, by setting   $\phi \cdot \mo = \phi \cdot \mo^+ - \phi \cdot \mo^-$. 
 
\begin{definition}\label{def:linearderivative}
   A functional   $f:\calM \to \R$  is  \textit{continuously linearly differentiable} 
 ($f\in \calC^1(\calM)$)  if there exists  $\delta_\mo  f: \calM \to \calC(\R)$,
   the \textit{linear derivative of $f$},  such that

   \begin{itemize}
       \item[I.]   $\calM\times \R \ni   (\mo,x) \mapsto \delta_\mo  f(\mo)(x)$ is continuous in the product topology, 

        \item[II.]  $|\delta_\mo  f(\mo)(x)| \le C(1+|x|^2)$ for some  $C\in (0,\infty)$, uniformly in $\mo$,
       
       \item[III.]   For all $\mo,\mo'\in \calM$ and $\Delta = \mo' - \mo$, then 
    \begin{equation}\label{eq:linDer}
        f(\mo') - f(\mo) = \int_0^1  \delta_\mo f(\mo + \eta \Delta) \cdot  \Delta  \ d\eta. 
    \end{equation}
   \end{itemize}

\end{definition}
 In light of \eqref{eq:linDer}, we can regard  $\delta_{\mo}f(\mo)(x)$ as the Gateaux derivative of $f$ at $\mo$ in the direction  $\delta_x$, namely
   $ \frac{d}{dh} f(\mo+h\delta_x)|_{h=0}.$ 
In the mean field game literature \cite{CarmonaDelarue,CardDelLasLio,TTZ},
the linear derivative is defined up to an additive constant as it is restricted to  probability measures.   Herein, 
the  total mass of the test measures $\mo,\mo'$ may differ, which pins down  $\delta_\mo  f$  uniquely. 

Next, we introduce  a notion of differential tailored to occupation measures. 

\begin{definition}\label{def:occDerivative}
    The \textit{occupation derivative} $\partial_{\mo} f: \calM \times  \R \to \R$, is   defined as 
    \begin{equation}\label{eq:defOccDerivative}
    \partial_{\mo}f(\mo,x) = \begin{cases}
        \delta_\mo  f(\mo)(x), \quad & f\in \calC^{1}(\calM), \\[0.5em]
        \delta_\mo  f(\mo,x)(x), \quad  & f\in \calC^{1,0}(\calM \times \R),
    \end{cases}  
\end{equation}
where $\calC^{1,0}(\calM \times \R)$ denotes the space of continuous functions $f:\calM \times \R\to \R $ in the product topology such that $f(\cdot,x)\in \calC^1(\calM)$ for all $x\in \R$. 
\end{definition} 
Evidently, the occupation derivative may be restricted to   $\calD$. When $f = f(\mo,x)$, we  write $f\in \calC^{1,2}(\calD)$ if $\delta_{\mo} f$, $\partial_x f$, $\partial_{xx} f$   exist and are jointly continuous.\footnote{Note also  that condition II. in \cref{def:linearderivative} is replaced by II'.  $|\delta_\mo  f(\mo,x)(y)| \le C(1+|y|^2)$ for some constant $C\in (0,\infty)$, uniformly in $(\mo,x)\in \calD$.}  
The motivation behind introducing the occupation derivative 
becomes clear when plugging in  the occupied process in \eqref{eq:defOccDerivative}. Given  $f\in \calC^{1}(\calD)$, then  
$\partial_{\mo} f(\calO_t,X_t)$ takes the linear derivative $\delta_\mo  f(\calO_t,X_t)(\cdot)$ and evaluates it at  $X_t$. 
In terms of directional derivative, this reads $$\partial_{\mo} f(\calO_t,X_t) =  \frac{d}{dh} f(\calO_t+h\delta_{X_t},X_t)|_{h=0}.$$
See \cref{fig:occDer}. The local nature of the occupation derivative comes directly from the  dynamics of the occupation flow  \eqref{eq:flowDynamics}, whence the terminology. 
As shown in \cref{sec:FITO}, the occupation derivative is intrinsically linked to the time derivative in the functional It\^o calculus \cite{DupireFITO}. Similar to the functional derivatives, $\partial_{\mo}$ and $\partial_x$ do not commute. Hence, the \textit{Lie bracket} $$[\partial_{\mo},\partial_x]f := \partial_{x\mo}f - \partial_{\mo x}f,$$ may be  nonzero in general. 
A simple example is $f(\mo,x) = \int_{\R}y \mo(dy)$, where $[\partial_{\mo},\partial_x]f = 1$. %
Next, we  see how the linear derivative and occupation measure interact with each other and provide a precise meaning of the chain rule \eqref{eq:chainRule}. 
 \begin{proposition}\label{prop:itoO} 
      If $X$ is a continuous semimartingale and 
      $f\in \calC^{1}(\calM)$, then
     \begin{align}
      df(\calO_t)  &=  \partial_{\mo} f(\calO_t,X_t) d\langle X \rangle_t,\label{eq:ITOO}\\[1em]
        df(\tilde{\calO}_t)  &=  \partial_{\mo} f(\tilde{\calO}_t,X_t) dt. 
        \label{eq:ITOOStd}
      \end{align}
  \end{proposition}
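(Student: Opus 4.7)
The plan is to promote the heuristic flow identity $d\calO_t = \delta_{X_t}d\langle X\rangle_t$ into a rigorous chain rule by combining the integral representation of the linear derivative (\cref{def:linearderivative} III) with the occupation time formula (\cref{prop:OccProperties} III). For $0 \le s < t$, let $\Delta_{s,t} := \calO_{s,t} = \calO_t - \calO_s$. Then
\begin{equation*}
f(\calO_t) - f(\calO_s) = \int_0^1 \delta_{\mo} f(\calO_s + \eta \Delta_{s,t}) \cdot \Delta_{s,t}\, d\eta,
\end{equation*}
and the occupation time formula rewrites each inner pairing as
\begin{equation*}
\delta_{\mo} f(\calO_s + \eta \Delta_{s,t}) \cdot \Delta_{s,t} = \int_s^t \delta_{\mo} f(\calO_s + \eta \Delta_{s,t})(X_u) \, d\langle X \rangle_u.
\end{equation*}

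Next, I would pass from this integrated identity to the differential statement by inserting a partition $0 = t_0 < t_1 < \cdots < t_n = t$ of vanishing mesh and summing the increments across the subintervals. The crucial observation is that $t \mapsto \calO_t$ is total variation continuous with modulus
\begin{equation*}
\frakm_1(\calO_s, \calO_t) = \lVert \Delta_{s,t}\rVert_1 = \Delta_{s,t}(\R) = \langle X \rangle_t - \langle X \rangle_s,
\end{equation*}
so on a fine partition $\calO_s + \eta \Delta_{s,t}$ is close in total variation to $\calO_u$ for every $u \in [s,t]$, uniformly in $\eta \in [0,1]$. Combined with the joint continuity of $\delta_{\mo} f$ on $\calM \times \R$ and the quadratic growth bound in \cref{def:linearderivative} II, a dominated convergence argument (against the product measure $d\eta\, d\langle X\rangle_u$) gives
\begin{equation*}
f(\calO_t) - f(\calO_0) = \int_0^t \delta_{\mo} f(\calO_u)(X_u) \, d\langle X \rangle_u = \int_0^t \partial_{\mo} f(\calO_u, X_u) \, d\langle X \rangle_u,
\end{equation*}
which is the integrated form of \eqref{eq:ITOO}. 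Relation \eqref{eq:ITOOStd} then follows verbatim by replacing $d\langle X \rangle$ with $du$; the only extra input is $\frakm_1(\tilde{\calO}_s, \tilde{\calO}_t) = t - s$, which supplies the same TV continuity even though $\tilde{\calO}$ may carry atoms.

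The main technical obstacle will be the limit passage. It is the total variation continuity of $\calO$ (rather than mere weak convergence) that legitimizes the substitution $\calO_s + \eta \Delta_{s,t} \rightsquigarrow \calO_u$ uniformly in $\eta$ and $u$, which is precisely the reason $\frakm_1$ is the natural metric on $\calM$ here. The quadratic growth in $x$ from \cref{def:linearderivative} II is handled by a standard localization, stopping $X$ at the exit of a large ball so that $\delta_{\mo} f(\cdot)(X_\cdot)$ is uniformly bounded and $\langle X \rangle$ stays finite on the stopped interval; removing the localization at the end is routine.
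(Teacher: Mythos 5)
Your argument is correct and is essentially the paper's: the paper states the proposition as a byproduct of Theorem \ref{thm:ItoOX}, whose appendix proof handles the occupation increments by exactly your route — the integral representation of $\delta_{\mo}f$, the occupation time formula, a vanishing-mesh partition, and the total-variation modulus $\frakm_1(\calO_s,\calO_t)=\langle X\rangle_t-\langle X\rangle_s$ together with joint continuity of $\delta_{\mo}f$ to pass to the limit. Your direct proof simply specializes that argument to $f$ independent of the spot (so no classical Itô terms are needed), and the same remark applies to the calendar-time case.
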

  \begin{proof}
      It is a byproduct of \cref{thm:ItoOX} below. 
  \end{proof}

         \begin{figure}[t]
\centering
\caption{The occupation derivative $\partial_{\mo}f(\calO_t,X_t)$ gives the sensitivity of $f$ with respect to a Dirac impulse at the spot $X_t$. } 
 \includegraphics[height=2.1in,width=3.0in]{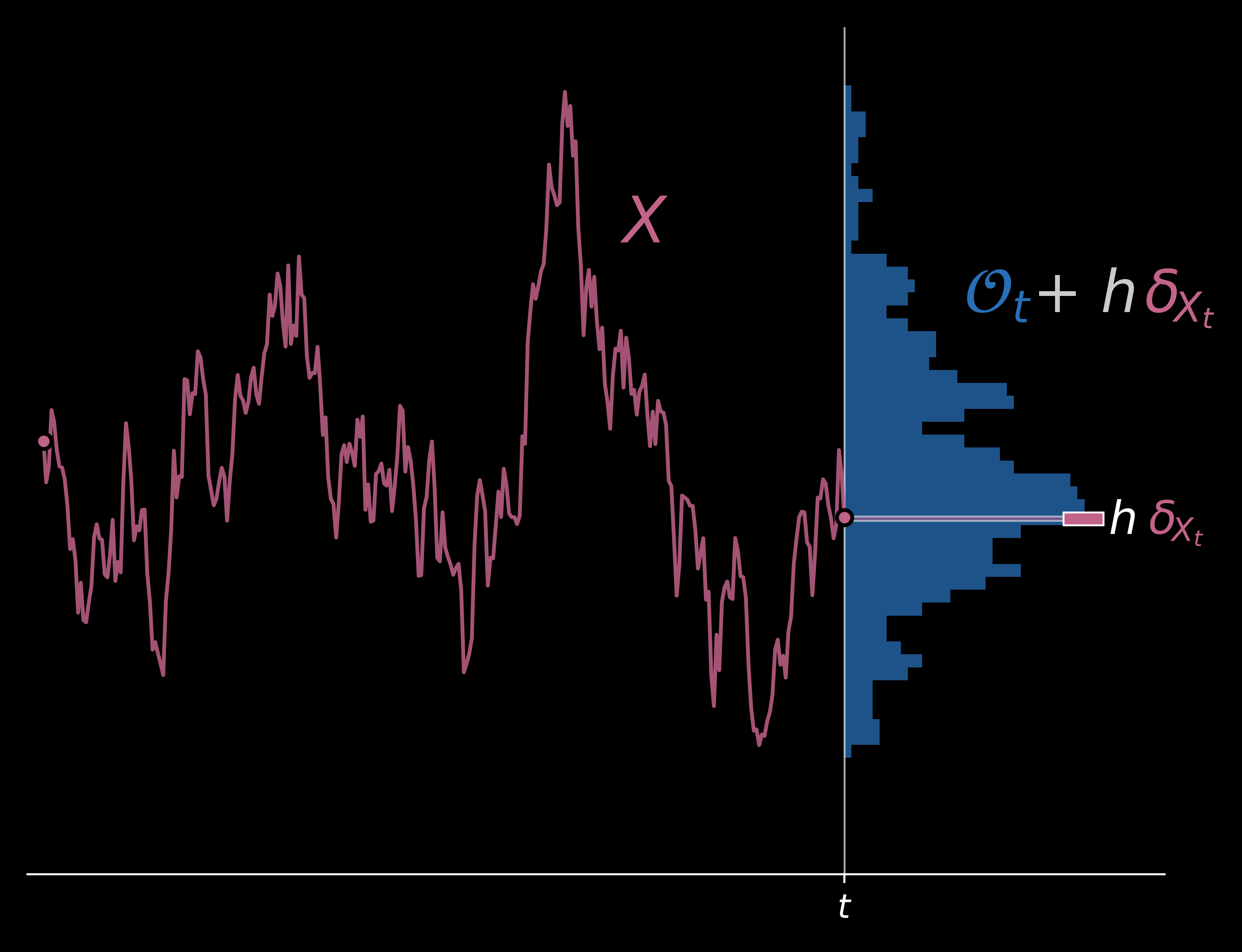}
 \label{fig:occDer}
 \end{figure}
      
\begin{example}\label{ex:linear}
Consider the linear functional $f(\mo) = \phi\cdot \mo$. In light of the occupation time formula \eqref{eq:OTF}, $f(\calO_t) = \int_{0}^t \phi(X_u) d\langle X \rangle_u$. Since  $\delta_\mo  f(\mo) \equiv  \phi$, 
 \begin{equation*}
    f(\calO_{t }) - f(\calO_{0}) =  \int_0^1  \delta_\mo f(\eta \calO_{t}) \cdot \calO_{t} d\eta = \int_{\R} \phi(x) \calO_{t}(dx) = \int_0^t\phi(X_u)d\langle X \rangle_u, 
\end{equation*}
which indeed yields \eqref{eq:ITOO}.  
\end{example}

\begin{example}\label{ex:t}
Consider the trivial but key  example $f(\mo) = g(\mo(\R))$ for some $g\in \calC^1([0,T])$. Then $f(\tilde{\calO}_t) = g(t)$, and as   $\mo(\R) = \mathds{1} \cdot \mo$ with $\mathds{1}(x)\equiv 1$, we obtain from \cref{ex:linear}   
and the chain rule that 
$$\partial_{\mo} f(\tilde{\calO}_t,X_t) = \delta_{\mo} f(\tilde{\calO}_t)(X_t) = g'(\tilde{\calO}_t(\R)) \ \underbrace{\delta_{\mo}(\mathds{1} \cdot \tilde{\calO}_t)(X_t)}_{= \ 1} = g'(t).$$
Applying \eqref{eq:ITOOStd}, we  recover the fundamental theorem of calculus $g(t)-g(0) = \int_0^t g'(u)du$. 
\end{example}

\begin{example}\label{ex:runMax} 
\textnormal{(Running maximum)} 
Consider the  functional
\begin{equation}
   M(\mo) =  \sup\Big\{ x \in \R \ : \ \frac{d\mo}{d\lambda}(x) > 0\Big\} = \sup \textnormal{supp}(\mo). 
\end{equation} 
Hence $M(\calO_t) = \sup_{s\le t} X_s$ is the maximum of  $X$ on $[0,t]$. If $t=0$, we use the convention that $\textnormal{supp}(\calO_0) = \{X_0\}$, i.e.,  $M(\calO_0) = X_0$. 
Note that  $M \notin \calC^1(\calM)$ as its linear derivative is discontinuous at the maximum $x=M(\mo)$ since $\lim_{y\uparrow  x} \delta_{\mo}M(\mo)(y) = 0 \ne 1 = \lim_{y\downarrow  x} \delta_{\mo}M(\mo)(y)$. 
We thus consider a mollified version of $M$ to apply \cref{prop:itoO}.  Given $\mo \in \calM^{\lambda}$, $\mo(\R)>0$, consider the smooth functional (akin to Laplace's principle in large deviations theory)  
\begin{equation}\label{eq:mollifMax}
    M^{\varepsilon}(\mo) =
        \varepsilon \log \avint_{\R} e^{x/\varepsilon}\mo(dx). 
\end{equation}
where we recall that $\avintin_A f do = \frac{1}{\mo(A)}\int_A f do$. 
It is straightforward to see that  $M^{\varepsilon} \to M$ pointwise. 
Moreover, using Examples \ref{ex:linear}, \ref{ex:t}, and the chain rule,   
\begin{equation}\label{eq:linDerMax}
    \partial_{\mo}M^{\varepsilon}(\mo,x) = \delta_{\mo}M^{\varepsilon}(\mo)(x) 
    = \frac{\varepsilon}{\mo(\R)}\left(e^{[x - M^{\varepsilon}(\mo)]/ {\varepsilon}} - 1\right). 
\end{equation}
Hence 
$M^{\varepsilon}\in \calC^{1}(\calM^{\lambda})$, and    $M^{\varepsilon}(\calO_t) = X_0 + \int_0^t \delta_{\mo}M^{\varepsilon}(\calO_s) \cdot d\calO_s$ using \cref{prop:itoO}. 
Next, a straightforward calculation shows that $\delta_{\mo}M^{\varepsilon}(\mo)$  converges,  as $\varepsilon \to 0$,  
to the Dirac mass at 
$M(\mo)$. 
If  $M_t := M(\calO_t)$, we thus have that 
$$M_t -X_0 = \lim_{\varepsilon \downarrow 0} M^{\varepsilon}(\calO_t) -X_0 =  
 \int_0^t \delta_{M_s}(X_s) d\langle X \rangle_s = L_t^0(M  - X), $$
where $L^0(M  - X)$ is the local time at zero of   
$M_t - X_t = \sup_{s\le t} X_s-X_{t}$. We thus  recover the  equality $M_t = X_0+ L^0_t(M  - X) \ \Q-a.s.$ established by \citet[Example 2]{DupireFITO},  generalizing   Lévy's distributional result.   
\end{example}

\subsection{It\^o's Formula}


\begin{theorem}\label{thm:ItoOX}
    \textnormal{{(Itô's formula)}} 
    Let $X$ be a continuous semimartingale and $\calO$ its flow of occupation measures. 
    If $f\in \calC^{1,2}(\calD)$, then  
       \begin{align}
       df(\calO_t,X_t) &= \Big(\partial_{\mo} + \frac{1}{2} \partial_{xx}\Big)f(\calO_t,X_t) d\langle X \rangle_t + \partial_x f(\calO_t,X_t)dX_t.\label{eq:itoOX}
    \end{align} 
Similarly, if $\tilde{\calO}$ is the calendar time occupation flow of $X$, then 
  \begin{align}\label{eq:itoOXstd}
       df(\tilde{\calO}_t,X_t) =  \partial_{\mo} f(\tilde{\calO}_t,X_t) dt   + \frac{1}{2} \partial_{xx}f(\tilde{\calO}_t,X_t) d\langle X \rangle_t + \partial_x f(\tilde{\calO}_t,X_t)dX_t.
    \end{align}
    
\end{theorem}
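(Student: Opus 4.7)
The plan is to mimic the standard Itô proof by partitioning time and doing two separate Taylor-type expansions: a linear derivative expansion in the measure variable, and a second-order Taylor expansion in the space variable. Fix $t>0$ and a sequence of partitions $\pi_n=\{0=t_0<\cdots<t_{k_n}=t\}$ with $|\pi_n|\to 0$. Telescope and split each increment through the intermediate point $(\calO_{t_i},X_{t_{i+1}})$:
\begin{equation*}
f(\calO_t,X_t)-f(\calO_0,X_0)=\sum_i \underbrace{\bigl[f(\calO_{t_{i+1}},X_{t_{i+1}})-f(\calO_{t_i},X_{t_{i+1}})\bigr]}_{A_i}+\sum_i \underbrace{\bigl[f(\calO_{t_i},X_{t_{i+1}})-f(\calO_{t_i},X_{t_i})\bigr]}_{B_i}.
\end{equation*}
Before taking limits, I would localize with the stopping times $\tau_m=\inf\{s:|X_s|+\langle X\rangle_s\ge m\}$ so that the occupied process stays in a compact subset of $\calD$ (using that $\frakm_1(\calO_s,\calO_u)=\langle X\rangle_u-\langle X\rangle_s$), allowing uniform continuity arguments.

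For the space increments $B_i$, I would apply the classical pointwise Taylor expansion in $x$ at $(\calO_{t_i},X_{t_i})$ and then invoke standard semimartingale arguments: the first-order terms $\sum_i \partial_xf(\calO_{t_i},X_{t_i})(X_{t_{i+1}}-X_{t_i})$ converge in probability to the Itô integral $\int_0^t\partial_xf(\calO_s,X_s)\,dX_s$, the second-order terms $\frac12\sum_i \partial_{xx}f(\calO_{t_i},X_{t_i})(X_{t_{i+1}}-X_{t_i})^2$ converge to $\frac12\int_0^t\partial_{xx}f(\calO_s,X_s)\,d\langle X\rangle_s$, and the remainder vanishes thanks to the uniform continuity of $\partial_{xx}f$ on the localized compact set.

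For the measure increments $A_i$, I would apply the linear derivative identity from \cref{def:linearderivative} III. with $\Delta=\calO_{t_i,t_{i+1}}$:
\begin{equation*}
A_i=\int_0^1\!\!\int_{\R}\delta_{\mo}f\bigl(\calO_{t_i}+\eta\,\calO_{t_i,t_{i+1}},X_{t_{i+1}}\bigr)(y)\,\calO_{t_i,t_{i+1}}(dy)\,d\eta.
\end{equation*}
The crux is that $\calO_{t_i,t_{i+1}}$ is supported on the path range $\{X_s:s\in[t_i,t_{i+1}]\}$, which shrinks to $\{X_{t_i}\}$ by continuity of $X$; meanwhile its total mass is $\Delta\langle X\rangle_{t_i}$ and $\frakm_1(\calO_{t_i}+\eta\,\calO_{t_i,t_{i+1}},\calO_{t_i})\le \Delta\langle X\rangle_{t_i}\to 0$. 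Using the joint continuity of $(\mo,x,y)\mapsto \delta_\mo f(\mo,x)(y)$ (in the $\frakm_1$-product topology) on the compact set supplied by localization, together with the quadratic growth bound II', I would show that $A_i=\partial_{\mo}f(\calO_{t_i},X_{t_i})\,\Delta\langle X\rangle_{t_i}+\varepsilon_i\,\Delta\langle X\rangle_{t_i}$ with $\max_i|\varepsilon_i|\to 0$, so $\sum_i A_i\to \int_0^t\partial_{\mo}f(\calO_s,X_s)\,d\langle X\rangle_s$. Sending the mesh to zero and then $m\to\infty$ yields \eqref{eq:itoOX}.

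The main obstacle is this last convergence for $A_i$: it couples the concentration of the incremental occupation measure $\calO_{t_i,t_{i+1}}$ near $X_{t_i}$ with the joint continuity of $\delta_\mo f$ in the measure argument, and requires a uniform modulus-of-continuity argument. The calendar-time formula \eqref{eq:itoOXstd} follows by the same scheme, replacing $\calO_{t_i,t_{i+1}}$ by $\tilde{\calO}_{t_i,t_{i+1}}$, whose total mass $t_{i+1}-t_i$ produces the $dt$ factor on the occupation derivative term while the classical piece $\sum_i B_i$ is unchanged and still yields $d\langle X\rangle_t$.
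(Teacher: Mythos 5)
Your proposal is correct and follows essentially the same route as the paper's proof: the identical telescoping split through $(\calO_{t_i},X_{t_{i+1}})$, the classical Itô/Taylor treatment of the frozen-measure space increments, and the linear-derivative expansion of the measure increments combined with the concentration of $\calO_{t_i,t_{i+1}}$ (via the occupation time formula in the paper, via its shrinking support in your write-up) and the joint continuity of $\delta_\mo f$ under $\frakm_1$. Your added localization is a harmless variant of the paper's pathwise modulus-of-continuity argument on $[s,t]$, so no genuinely different ideas are involved.
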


\begin{proof}
    See \ref{app:ItoOX}. 
\end{proof}


\begin{example}\label{ex:tx}
Let   $f(\mo,x) = g(\mo(\R),x)$,  $g\in \calC^{1,2}([0,T]\times \R)$. Then $f(\tilde{\calO}_t,X_t) = g(t,X_t)$. As in \cref{ex:t}, we have  $\partial_{\mo} f = \partial_t g$.  
As $\partial_x g =\partial_x f$, $\partial_{xx} g =\partial_{xx} f$, so we  recover from \eqref{eq:itoOXstd} the classical It\^o formula, 
\begin{align*}
    dg(t,X_t) &= \partial_t g(t,X_t) dt  + \partial_x g(t,X_t)dX_t + \frac{1}{2} \partial_{xx}g(t,X_t) d\langle X \rangle_t. 
\end{align*}
\end{example}

\begin{remark}\label{rem:multiD}
    Let $X = (X^1,\ldots,X^d)$ be a continuous semimartingale in $\R^d$, $d\ge 2$, with covariation process $\langle X \rangle = (\langle X^i,X^j \rangle)\in \R^{d\times d}.$ The occupation flow of $X$  is given  by  
    $\calO_t = \int_0^t \delta_{X_s}\textnormal{tr}(d\langle X \rangle_s)$, taking values in the space  $\calM_d$ of  Borel measures on $\R^d$.  Note that $\calO$ is still a single entity,  akin to  the  time variable. 
    \cref{thm:ItoOX} is then easily generalized: if $f\in \calC^{1,2}(\calM_d\times \R^d)$,  
      \begin{align*}
        df(\calO_t,X_t) &=  \partial_{\mo} f(\calO_t,X_t) \textnormal{tr}(d\langle X \rangle_t)  +   \frac{1}{2} \textnormal{tr}(\nabla^2 f(\calO_t,X_t) d\langle X \rangle_t) \\
        &+ \nabla f(\calO_t,X_t) \cdot  dX_t,\label{eq:itoGeneralmultiD}
    \end{align*}
    where $\nabla, \nabla^2$ denote  the gradient and Hessian operator. 
    The occupation derivative 
    is again given by $\partial_{\mo} f(\mo,x) = \delta_{\mo}f(\mo,x)(x)$. 
    A general It\^o's formula with arbitrary stochastic clock can be found in \cite[Proposition 3.3]{SonerTissotZhang}. 
\end{remark}

  \subsubsection{Connection with the Functional It\^o Calculus}\label{sec:FITO} 
  
We first recall the functional It\^o formula, established by \citet{DupireFITO} and further studied   by  \citet{ContBally, ContFournie}. Adopting the canonical setting, let  $\Omega_T$ be the space of càdlàg paths  $\omega:[0,T] \to \R$ for some $T>0$, and introduce the coordinate process $X_t(\omega) = \omega_t$.  Fix also a non-anticipative path functional $\textnormal{f}:[0,T]\times \Omega_T \to \R$ meaning that $\textnormal{f}(t,\omega)$ only depends on $\omega$ up to time $t$.  If $\textnormal{f}$ is regular enough, we then have from \cite{DupireFITO} that 
     \begin{equation}\label{eq:FITO}
        d\textnormal{f}(t,\omega) = \Delta_t\textnormal{f}(t,\omega)dt  
        + \Delta_{x}\textnormal{f}(t,\omega) dX_t(\omega) +   \frac{1}{2}\Delta_{xx} \textnormal{f}(t,\omega) d\langle X \rangle_t(\omega),  \quad \Q-a.s.,
    \end{equation}
    where   $\Delta_t\textnormal{f}(t,\omega) = \frac{d}{dh} \textnormal{f}(t+h,\omega_{\cdot \wedge t})|_{h=0}$,  $\Delta_x\textnormal{f}(t,\omega) = \frac{d}{dh} \textnormal{f}(t,\omega + h \mathds{1}_{[t,T]})|_{h=0}$. 
To connect \eqref{eq:FITO} with \cref{thm:ItoOX}, fix $f \in \calC^{1,2}(\calM\times \R)$ and introduce the functional 
 $$\textnormal{f}(t,\omega) = f(\tilde{\calO}_t,X_t)(\omega) = f(\tilde{\calO}_t(\omega),\omega_t).$$ 
 First, note that  $\Delta_x \textnormal{f} = \partial_x f$ and $\Delta_{xx}\textnormal{f} = \partial_{xx} f$ 
as a vertical bump of $\omega$ at $t$ does not affect the Lebesgue integrals $\int_0^t \mathds{1}_{\cdot} (\omega_s)ds$, therefore leaving $\tilde{\calO}_t$ unchanged.   
For the functional time derivative, we have for all $h\in [0,T-t]$ that 
\begin{align*}
\textnormal{f}(t+h,\omega_{\cdot \wedge t}) = f(\tilde{\calO}_{t+h}(\omega_{\cdot \wedge t}), \omega_t)  
= f(\tilde{\calO}_{t}(\omega), \omega_t) + h\int_0^1  \delta_\mo  f(\tilde{\calO}_t(\omega) + \eta h \delta_{\omega_t},\omega_t) \cdot \delta_{\omega_t} d\eta, 
\end{align*}
using that $\tilde{\calO}_{t+h}(\omega_{\cdot \wedge t}) - \tilde{\calO}_{t}(\omega) = h\delta_{\omega_t}$ since $\omega$ is frozen on $[t,t+h]$.  Taking derivative with respect to $h$ and letting $h\downarrow 0$ leads to 

\begin{equation}\label{eq:OCCvsFITO}
 \Delta_t \textnormal{f}(t,\omega) = \partial_{\mo} f(\tilde{\calO}_t,X_t)(\omega),
\end{equation}
further confirming that $\mo$ replaces time. 
All in all, we indeed obtain that 
\begin{align*}
df(\tilde{\calO}_t,X_t)(\omega)  &= d\textnormal{f}(t,\omega) \\
 &=    \Delta_t\textnormal{f}(t,\omega)dt  
        + \Delta_{x}\textnormal{f}(t,\omega) dX_t(\omega) +   \frac{1}{2}\Delta_{xx} \textnormal{f}(t,\omega) d\langle X \rangle_t(\omega)\\
&=  \left(\partial_{\mo} f(\tilde{\calO}_t,X_t) dt+ \partial_x f(\tilde{\calO}_t,X_t)dX_t  + \frac{1}{2} \partial_{xx}f(\tilde{\calO}_t,X_t) d\langle X \rangle_t\right)(\omega).
\end{align*}

\subsubsection{Connection with the Malliavin Calculus}\label{sec:Malliavin}
 
 This section regards terminal occupation functionals $\calO_T \mapsto f(\calO_T)$ through the lens of Malliavin calculus; see, e.g.,  \cite{DiNunno}.  
 Suppose that $\Q$ is the Wiener measure 
 so that the canonical process $X$ is Brownian motion. Then $\calO_T = \tilde{\calO}_T = \int_0^T \delta_{X_t} dt$ $\Q-$a.s. due to  $\langle X \rangle_t = t$.  We  then obtain the following result.

\begin{proposition} \label{prop:Malliavin} Let $f\in \calC^{1}(\calM)$ such that $x\mapsto  \partial_\mo f(\mo,x)$ is differentiable for every $\mo\in \calM^{\lambda}$, and $\textnormal{f}(\omega) :=f(\calO_T(\omega))$, $\omega\in \Omega_T$, is Malliavin differentiable (see \cite{DiNunno})). Then, the Malliavin derivative of $\textnormal{f}$ at $t\in [0,T]$ admits the alternative expression 
\begin{equation}\label{eq:malliavin}
        D_{t}\textnormal{f}(\omega)  := \frac{d}{dh}\textnormal{f}(\omega + h\mathds{1}_{[t,T]})\Big |_{h=0}  = \int_t^T \partial_{x\mo}f(\calO_T(\omega),\omega_s)ds, \quad \Q-a.s. 
    \end{equation}
\end{proposition}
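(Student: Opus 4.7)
The plan is to compute the directional derivative $\frac{d}{dh}f(\calO_T(\omega^h))\big|_{h=0}$ directly, where $\omega^h := \omega + h\mathds{1}_{[t,T]}$. Since $\Q$ is Wiener measure, $\langle X\rangle_s = s$ and $\calO_T$ coincides with the calendar-time occupation measure, so
\begin{equation*}
\calO_T^h := \calO_T(\omega^h) = \int_0^t \delta_{\omega_s}\, ds + \int_t^T \delta_{\omega_s+h}\, ds \in \calM,
\end{equation*}
and hence $\calO_T^h - \calO_T = \int_t^T(\delta_{\omega_s+h} - \delta_{\omega_s})\, ds$ as a signed measure. Observe that the convex combination $\mo_\eta^h := (1-\eta)\calO_T + \eta\calO_T^h$ remains in $\calM$ for every $\eta\in[0,1]$, so $\delta_\mo f(\mo_\eta^h)$ is well-defined all along the segment.

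Next I would apply the linear-derivative identity \eqref{eq:linDer} with $\mo = \calO_T$ and $\mo' = \calO_T^h$, and exchange the order of integration via Fubini to rewrite
\begin{equation*}
\textnormal{f}(\omega^h) - \textnormal{f}(\omega) = \int_0^1\!\!\int_t^T\bigl[\delta_\mo f(\mo_\eta^h)(\omega_s + h) - \delta_\mo f(\mo_\eta^h)(\omega_s)\bigr]\, ds\, d\eta.
\end{equation*}
The hypothesis that $x \mapsto \partial_\mo f(\mo, x) = \delta_\mo f(\mo)(x)$ is differentiable then lets me express the bracket as $h\int_0^1 \partial_{x\mo}f(\mo_\eta^h, \omega_s + rh)\, dr$. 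Dividing by $h$ and letting $h \to 0$ should produce the claimed formula, provided the limit can be pushed inside the three integrations.

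The main obstacle, and essentially the only non-cosmetic step, is justifying this limit interchange. My plan is a dominated-convergence argument: as $h \to 0$, $\mo_\eta^h \to \calO_T$ in the total-variation metric $\frakm_1$ uniformly in $\eta \in [0,1]$, while $\omega_s + rh \to \omega_s$ uniformly in $(s, r, \eta)$ within a neighborhood of the compact range $\{\omega_s : s \in [t,T]\}$. Joint continuity of $(\mo, x) \mapsto \partial_{x\mo}f(\mo, x)$, inherited from Definition~\ref{def:linearderivative}~I.\ together with the differentiability-in-$x$ assumption, supplies the pointwise limit, while the growth bound II.\ furnishes a uniform majorant on this neighborhood. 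Dominated convergence then delivers \eqref{eq:malliavin}; the assumed Malliavin differentiability of $\textnormal{f}$ ensures that this pathwise Gâteaux derivative coincides with the standard Sobolev-type Malliavin derivative.
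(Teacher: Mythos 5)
Your overall strategy is the same as the paper's: identify the shift $\omega\mapsto\omega+h\mathds{1}_{[t,T]}$ with a translation of the incremental occupation measure, apply the linear-derivative identity \eqref{eq:linDer}, and use the occupation time formula to turn the pairing with $\calO_T^h-\calO_T$ into $\int_t^T\bigl[\delta_\mo f(\cdot)(\omega_s+h)-\delta_\mo f(\cdot)(\omega_s)\bigr]ds$. The gap is precisely at the step you single out as the only non-cosmetic one. You differentiate $\delta_\mo f(\mo_\eta^h)$ in $x$ at the \emph{moving} measures $\mo_\eta^h$ and then invoke dominated convergence, claiming that joint continuity of $(\mo,x)\mapsto\partial_{x\mo}f(\mo,x)$ is ``inherited from Definition~\ref{def:linearderivative}~I.\ together with the differentiability-in-$x$ assumption'' and that condition II.\ supplies a majorant. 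Neither claim holds: condition I.\ gives joint continuity of $\delta_\mo f(\mo)(x)$ itself, and differentiability in $x$ for each fixed $\mo$ does not make $\partial_x\delta_\mo f(\mo)(x)$ continuous in $(\mo,x)$ (nor even in $x$ alone); likewise the quadratic-growth bound II.\ controls $|\delta_\mo f(\mo)(y)|$, not $|\partial_{x\mo}f(\mo,y)|$, so no dominating function for the derivative is available from the hypotheses. Consequently the pointwise convergence $\partial_{x\mo}f(\mo_\eta^h,\omega_s+rh)\to\partial_{x\mo}f(\calO_T,\omega_s)$, which your dominated-convergence argument needs, is unjustified. (A smaller point: writing the increment as $h\int_0^1\partial_{x\mo}f(\mo_\eta^h,\omega_s+rh)\,dr$ already presumes absolute continuity of $x\mapsto\delta_\mo f(\mo_\eta^h)(x)$; the mean value theorem is the safer tool under bare differentiability.)

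The paper's proof is arranged exactly so as to avoid any regularity of $\partial_{x\mo}f$ in the measure variable. It first collapses the $\eta$-integral by the mean value theorem, obtaining $\delta_\mo f(\calO_T+\eta^h\Delta_{t,T}^h)\cdot\Delta_{t,T}^h$; since $\frakm_1(\calO_T+\eta^h\Delta_{t,T}^h,\calO_T)\le\lVert\Delta_{t,T}^h\rVert_1\to0$ (continuity of translation in $L^1$), the joint continuity of $\delta_\mo f$ from Definition~\ref{def:linearderivative}~I.\ lets one write $\delta_\mo f(\calO_T+\eta^h\Delta_{t,T}^h)=\delta_\mo f(\calO_T)+\varepsilon_h$ with $\varepsilon_h\to0$, and the difference-quotient limit $\frac1h\,\phi\cdot\Delta_{t,T}^h\to\int_t^T\partial_x\phi(\omega_s)\,ds$ is then applied only to the single \emph{frozen} function $\phi=\delta_\mo f(\calO_T)$, which is differentiable by hypothesis. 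To repair your argument you would either have to add joint continuity (and local boundedness) of $\partial_{x\mo}f$ as an extra assumption, or restructure it along these lines by freezing the measure argument at $\calO_T$ before differentiating in $x$.
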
 
\begin{proof}
    See \cref{app:Malliavin}. 
\end{proof}

\begin{remark}
Introduce  the Fr\'echet derivative  $\frakD \! \textnormal{f} \in L^2([0,T])$ of  $\textnormal{f}(\omega) =f(\calO_T(\omega))$, defined  via   $\frac{d}{dh}\textnormal{f}(\omega + h\omega') |_{h=0} =  
 (\frakD \! \textnormal{f}(\omega), \omega')_{L^2([0,T])}$  $\forall \omega,\omega' \in \Omega_T$. Then  \eqref{eq:malliavin} implies that 
   $$\frakD_t \! \textnormal{f}(\omega) = - \partial_tD_t\textnormal{f}(\omega) 
   = \partial_{x\mo}f(\calO_T(\omega),\omega_{t}).$$ 
Interpretating the path functional  $\textnormal{f}$ as a lift of $f$, i.e.,   $\textnormal{f}(\omega) = f(\mo)$ if $\calO_T(\omega) = \mo$, 
then the relation 
$\frakD \! \textnormal{f} = \partial_{x\mo}f  = \partial_{x} \delta_{\mo}f $  
is analogous to the link between the \textit{Lions} derivative and  $\delta_{\mo}$ in mean field games  \cite{CardDelLasLio,CarmonaDelarue,LasryLions}.
\end{remark}

\subsection{Feynman-Kac's Formula for the  Backward Heat Equation}\label{sec:FKHeat}
We first  prove a Feynman-Kac theorem for occupied Brownian motion, while a more general result will be given in  \cref{sec:OSDE}. 
Write $\varphi\in \calC^{2}(\calD)$ if $\varphi \in \calC^{2,2}(\calD)$  and $(x,y)\mapsto \delta_\mo \varphi(\mo,x)(y) $,   $(y,y')\mapsto \delta_{\mo\mo} \varphi(\mo,x)(y,y')$ are twice continuously differentiable with bounded derivatives. 

\begin{theorem}\label{thm:FK}
\textnormal{\text{(Feynman-Kac)}}  Let $\varphi:\calD\to \R$ such that $\varphi(\calO_T,X_T) \in L^1(\Q)$ and consider  the  backward heat equation, 
\begin{subnumcases}{}
\Big(\partial_{\mo} + \frac{1}{2}\partial_{xx}\Big) u = 0, & \text{ on } \ \ $\calD_{T} := \{(\mo,x) \in  \calD  :  \mo(\R) < T\}$, \label{eq:heatPDE}\\[0.2em]
      u = \varphi, & \text{ on } \hspace{-0.75mm} $\partial \! \calD_{T} := \{(\mo,x) \in  \calD  :  \mo(\R) = T\}$. \label{eq:heatTerminal}
\end{subnumcases}
If $v\in \calC^{1,2}(\calD)$ is a classical solution of  \eqref{eq:heatPDE}$-$\eqref{eq:heatTerminal}, then  
\begin{equation}\label{eq:valueFctal}
    v(\mo,x) = \E_{\mo,x}^{\Q}[\varphi(\calO_T,X_T)]  := \E^{\Q}[\varphi(\mo+\calO_{T-t}^x,x + X_{T-t})], \quad \mo(\R) = t, 
\end{equation}
where  $\calO_{T-t}^x(A) = \calO_{T-t}(A-x) $. 
Conversely, if $\varphi\in \calC^{2}(\calD)$, 
then $v$ in \eqref{eq:valueFctal} belongs to $\calC^{2}(\calD)$ as well and is the unique classical solution of \eqref{eq:heatPDE}$-$\eqref{eq:heatTerminal}.  
 \end{theorem}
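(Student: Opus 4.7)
The plan is to split the theorem into (a) a verification argument showing any classical solution equals the expectation in~\eqref{eq:valueFctal}, and (b) a construction argument showing the candidate defined by~\eqref{eq:valueFctal} is itself a classical solution in $\mathcal{C}^{2}(\mathcal{D})$; uniqueness will then follow directly from (a). The main difficulty will lie in the regularity step of (b).

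For (a), fix $(\mathfrak{o}, x) \in \mathcal{D}_T$, set $t = \mathfrak{o}(\mathbb{R}) < T$, and realize the occupied Brownian motion on $[t, T]$ started from $(\mathfrak{o}, x)$ by taking $X_s = x + W_{s-t}$ and $\mathcal{O}_s = \mathfrak{o} + \int_t^s \delta_{X_u}\,du$, where $W$ is a standard Brownian motion under $\mathbb{Q}$. Since $d\langle X\rangle_s = ds$, Itô's formula (Theorem~\ref{thm:ItoOX}) applied to the classical solution $v$ yields
\begin{equation*}
v(\mathcal{O}_T, X_T) - v(\mathfrak{o}, x) = \int_t^T \Bigl(\partial_\mathfrak{o} + \tfrac{1}{2}\partial_{xx}\Bigr)v(\mathcal{O}_s, X_s)\,ds + \int_t^T \partial_x v(\mathcal{O}_s, X_s)\,dX_s.
\end{equation*}
The drift integrand vanishes by~\eqref{eq:heatPDE}, so $v(\mathcal{O}_\cdot, X_\cdot)$ is a local martingale on $[t, T]$. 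Localizing along $\tau_n = T \wedge \inf\{s \geq t : |X_s| \geq n\}$ turns $\partial_x v$ into a bounded integrand by continuity, producing a genuine martingale; taking expectation, sending $n \to \infty$, and invoking~\eqref{eq:heatTerminal} together with the $L^1$-integrability of $\varphi(\mathcal{O}_T, X_T)$ to justify the limit passage delivers~\eqref{eq:valueFctal}.

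For (b), I would first exploit the translation invariance of Brownian motion to recast~\eqref{eq:valueFctal} as
\begin{equation*}
v(\mathfrak{o}, x) = \mathbb{E}\Bigl[\varphi\Bigl(\mathfrak{o} + \int_0^{T-t}\delta_{x+W_s}\,ds,\; x+W_{T-t}\Bigr)\Bigr], \qquad t = \mathfrak{o}(\mathbb{R}),
\end{equation*}
and then differentiate under the expectation. The bounded-derivative assumptions on $\delta_\mathfrak{o}\varphi$ and $\delta_{\mathfrak{o}\mathfrak{o}}\varphi$ built into the definition of $\mathcal{C}^{2}(\mathcal{D})$ are tailored precisely to furnish integrable dominants for $\partial_x$, $\partial_{xx}$, $\delta_\mathfrak{o}$, and the requisite mixed derivatives applied inside the expectation. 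This step is where the main difficulty lies: because $x$ enters both arguments of $\varphi$ (through the shift $x + W_s$ of the running path and through the terminal point $x + W_{T-t}$), computing $\partial_x v$ forces a chain rule that couples $\partial_x \varphi$ with $\delta_\mathfrak{o} \varphi$, and $\partial_{xx} v$ must honestly track the mixed $\mathfrak{o}$-$x$ derivatives rather than treat the two arguments as independent.

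Once $v \in \mathcal{C}^{2}(\mathcal{D})$ is established, the PDE~\eqref{eq:heatPDE} follows quickly. By the strong Markov property of $(\mathcal{O}, X)$ (Proposition~\ref{prop:Markov}), $M_s := v(\mathcal{O}_s, X_s) = \mathbb{E}^{\mathbb{Q}}[\varphi(\mathcal{O}_T, X_T) \mid \mathcal{F}_s]$ is a $\mathbb{Q}$-martingale on $[t, T]$, and Itô's formula identifies its finite-variation part as $\int_t^{\cdot}(\partial_\mathfrak{o} + \tfrac{1}{2}\partial_{xx})v(\mathcal{O}_s, X_s)\,ds$. This must therefore vanish identically; evaluating at $s = t$ and letting $(\mathfrak{o}, x)$ range over $\mathcal{D}_T$ yields the PDE pointwise. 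The boundary condition~\eqref{eq:heatTerminal} is immediate from~\eqref{eq:valueFctal}, and uniqueness follows from (a): any two classical solutions differ by a $\mathcal{C}^{1,2}$ solution of the homogeneous problem with zero terminal data, which the first part forces to vanish identically.
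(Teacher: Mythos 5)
Your proposal follows essentially the same route as the paper: the verification half via Itô's formula and the martingale property of $v(\calO,X)$, and the converse half by differentiating \eqref{eq:valueFctal} under the expectation, where you correctly identify the key subtlety that the $x$-derivative couples $\partial_x\varphi$ with $\partial_y\delta_{\mo}\varphi$ integrated against the translated occupation measure $\calO^x_{T-t}$ — exactly the decomposition $\partial_x v = w_1^{\varphi}+w_2^{\varphi}$ carried out in the paper's appendix. The only difference is that you leave the regularity computation at the level of a plan while the paper executes it, and your localization argument in the verification step is in fact slightly more careful than the paper's terse treatment.
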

 \begin{proof}
     See \ref{app:FKBM}.
 \end{proof}

We finally discuss several examples where $\varphi$ has little regularity while the value functional is still smooth. As we shall see, this is in contrast with other path-dependent calculi 
where the  associated value functional typically fails to be even continuous. 
\begin{example}\label{ex:LT0}
\textnormal{(Local time at zero)} 
Let $X$ be Brownian motion and  $\varphi(\mo,x) =\frac{d\mo}{d\lambda}(0)$, irrespective of $x$. Hence  $\varphi(\calO_T,X_T) = L_T^0$ is the local time at $0$. 
If $(\mo,x)\in \partial\calD_t$, a simple calculation shows that 
  \begin{align*}
            v(\mo,x) &= \E^{\Q}_{\mo,x}[L_T^{0}] 
             =   \E^{\Q}_{\mo,x}\left[L_t^0 + L_{t,T}^{0}\right]  
               = \frac{d\mo}{d\lambda}(0)  +  \E^{\Q}[L_{T-t}^{|x|}].  
        \end{align*}
         The first term in the last equality is linear in $\mo$ and Lipschitz continuous with respect to the metric $\frakm_{\infty}$ seen in \cref{sec:occProcess}. 
         Moreover, the second term is continuous in $x$  and even piecewise smooth on $\R\setminus \{0\}$ for all $t<T$. From  the pathwise setting  in \cref{sec:FITO}, we note that the functional 
         \begin{equation}\label{eq:loc0Fctal}
             \textnormal{v}(t,\omega) = \E^{\Q}[L_T^{0}  |   \calF_t](\omega) = L^0_t(\omega) + \E^{\Q}\big[L_{T-t}^{|x|}\big]\big |_{x=\omega_t}, \quad \omega\in \Omega_T, 
         \end{equation} 
         is \textit{not} continuous in $\omega$ (with respect to the supremum norm), regardless of the definition of  $L^0_t:\Omega_T \to \R$. 
         First, one may set 
         $L^0_t(\omega) = \lim_{\varepsilon \to 0} \frac{1}{2\varepsilon}\int_0^t \mathds{1}_{B_{\varepsilon}}(\omega_s)ds $ as the  occupation measures $\calO,\tilde{\calO}$ almost surely coincide under the Wiener measure. 
         However, if $(\omega^N)$ is a sequence of piecewise linear paths   that converges to  $\omega$ (e.g. using Schauder's theorem) it may  happen that $L^0_t(\omega^N) = \infty$ as the occupation measure of $\omega^N$ is not absolutely continuous with respect to $\lambda$. 
         Another possibility  is 
         $L^0_t(\omega) := \lim_{\varepsilon \to 0} \frac{1}{2\varepsilon}\int_0^t \mathds{1}_{B_{\varepsilon}}(X_s)d\langle X \rangle_s(\omega) $ 
         when $\langle X \rangle$ is well-defined. But  $L^0_t(\omega^N) = 0$ for the above sequence $(\omega^N)$, leading again to the discontinuity of $L^0_t$ with respect to $\omega$.  
         Finally, we could derive  the local time at zero (or any other level) from Tanaka's formula 
         \cite[Chapter VI]{RevuzYor}. 
   However, the expression would involve stochastic integrals  which  are infamously discontinuous with respect to the input path. This representation  is thus subject to the same fate as the previous ones. 
\end{example}
\begin{example} \label{ex:heatspotLT} \textnormal{(Spot Local time) } Let $\varphi(\mo,x) = \frac{d\mo}{d\lambda}(x)$, that is $\varphi(\calO_t,X_t) = L_t^{X_t}$. Then $\varphi$ is continuous in $x$, since $X$ is a continuous martingale, and also continuous in  $\mo$  with respect to $\frakm_{\infty}$. 
However, $\varphi$ is  not differentiable in $x$.  
For every $(\mo,x) \in \partial\calD_t$, $t<T$, let us  write the value function as  
  \begin{align*}
            v(\mo,x) = \E^{\Q}_{\mo,x}[L_T^{X_T}] 
              = \int_{\R} \frac{d\mo}{d\lambda}(y) \phi_{T-t}(x-y) dy +  \E^{\Q}\big[L_{T-t}^{X_{T-t}}\big],
        \end{align*}
        where $\phi_{\tau}$ is the density 
        of $\calN(0,\tau).$ 
        From $\frac{d\mo}{d\lambda}(y)dy = \mo(dy)$ and \cref{prop:EUPrice} below,  we obtain 
    \begin{equation}\label{eq:condexpLT}
              v(\mo,x)= \int_{\R} \phi_{T-t}(x-y) \mo(dy) + \sqrt{\frac{2}{\pi}(T-t)}. 
          \end{equation}
          Since $x\mapsto \phi_{T-t}(x-y)$ is smooth for all $y\in \R,$ $t<T$, and the second term is  independent of $x$, then $v$ is smooth in $x$. For the occupation derivative, recall that $t = \mo(\R)$ as $(\mo,x) \in \partial\calD_t$,  and $\delta_{\mo}(\mo\mapsto \mo(\R)) \equiv 1$. Hence,  
          \begin{align*}
            \partial_{\mo}v(\mo,x)  &= \int_{\R} \partial_t\phi_{T-t}(x-y) \mo(dy) + \underbrace{\phi_{T-t}(0)  -\big(2\pi (T-t)\big)^{-1/2}}_{=0}\\
            &= -\frac{1}{2}\int_{\R} \partial_{xx}\phi_{T-t}(x-y) \mo(dy),
          \end{align*}
          since  $\phi$ solves the heat equation $\partial_{\tau} \phi_{\tau} = \frac{1}{2}\partial_{xx}\phi_{\tau}$. 
          We conclude that $v$ belongs to $\calC^{1,2}(\calD)$  and  is thus a classical solution of \eqref{eq:heatPDE}$-$\eqref{eq:heatTerminal}. 
         On the other hand, the  path functional $\textnormal{v}(t,\omega) = \E^{\Q}[L_T^{X_T}  |   \calF_t](\omega)$ 
         is  discontinuous in  $\omega$  similar to \cref{ex:LT0}. 
\end{example}

\subsection{Occupied SDEs and Dirichlet Problems} \label{sec:OSDE}
 Consider the  \textit{occupied stochastic differential equation (OSDE)}, 
\begin{subnumcases}{}
d\calO_t =  \delta_{X_t} d\langle X \rangle_t,  & $\calO_0 \in \calM$, \label{eq:OSDEOcc}\\[0.2em] 
 dX_t  =   b(\calO_t,X_t)dt + \sigma(\calO_t,X_t)dW_t, \; & $X_0\in \text{supp}(\calO_0),$
\label{eq:OSDE}
\end{subnumcases}
with  coefficients $b,\sigma: \calD\to \R$. 
Note that \eqref{eq:OSDEOcc} can be equivalently written as $d\calO_t = \delta_{X_t} \sigma^2(\calO_t,X_t) dt$.  
We first discuss the wellposedness of  \eqref{eq:OSDEOcc}$-$\eqref{eq:OSDE}. 
Similar to classical SDEs, the coefficients $b,\sigma$ must satisfy Lipschitz and growth conditions with respect to an appropriate metric for $\mo$. While the 
  metrics $\frakm_p$ in \cref{sec:occProcess} may lead to similar results, it proves convenient  to use a distance with explicit dual representation, such as  the \textit{bounded Lipschitz} 
\textit{distance}    \cite{Hanin}. 
 Let $\phi:\R\to \R$ be a bounded Lipschitz function and $[\phi]_{\text{Lip}}$ denote its  Lipschitz constant. If  $\lVert \phi \rVert_{\textnormal{\tiny{BL}}} =\lVert\phi \rVert_{L^\infty(\R)} \vee [\phi]_{\text{Lip}}$ and $\mo$ is a signed Radon measure   on $\R$,  define its  bounded Lipschitz norm  by 
 \begin{equation}\label{eq:MongeKantorovic}
    |\mo|_{\textnormal{\tiny{BL}}} = \sup\{\phi \cdot \mo \ : \  \lVert \phi \lVert_{\textnormal{\tiny{BL}}} \ \le \  1 \}.
 \end{equation}
Also,  introduce  the parabolic norm  $\varrho(\mo,x) = \sqrt{|\mo|_{\textnormal{\tiny{BL}}} +|x|^2}$, $(\mo,x)\in \calD$. 
\begin{theorem}
\label{thm:OSDE}
Suppose that $b,\sigma\in \calC(\calD)$  satisfy the following Lipschitz  and growth conditions: there exists $K>0$ such that for all $(\mo,x),(\mo',x')\in \calD$, 
\begin{align}
     |b(\mo,x)|^2 &\le K^2[1+\varrho(\mo,x)^2], \quad  |\sigma(\mo,x)|^2 \le K^2[1+\varrho(\mo,x)], \label{eq:OSDEGrowth}\\[1em]  
|f(\mo',x')-f(\mo,x)| &\le K\varrho(\mo'-\mo,x'-x), \quad f\in \{b,\sigma\}. \label{eq:OSDELip}
\end{align}
   Then the OSDE \eqref{eq:OSDEOcc}$-$\eqref{eq:OSDE} 
   admits a unique strong solution. 
\end{theorem}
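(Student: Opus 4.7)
The plan is to adapt the classical Picard iteration from SDE theory to the coupled flow-process system, using the dual representation of the bounded Lipschitz norm to control the measure-valued component. Set $X^0_t \equiv X_0$, $\calO^0_t \equiv \calO_0$, and define recursively
\begin{align*}
X^{n+1}_t &= X_0 + \int_0^t b(\calO^n_s, X^n_s)\,ds + \int_0^t \sigma(\calO^n_s, X^n_s)\,dW_s, \\
\calO^{n+1}_t &= \calO_0 + \int_0^t \delta_{X^{n+1}_s}\,\sigma(\calO^n_s, X^n_s)^2\,ds.
\end{align*}
Each $X^{n+1}$ is then just an explicit It\^o integral with adapted, linear-growth integrands, and $\calO^{n+1}$ coincides with the genuine occupation flow of $X^{n+1}$ since $d\langle X^{n+1}\rangle_s = \sigma(\calO^n_s,X^n_s)^2 ds$.

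The first step is uniform-in-$n$ moment bounds $\sup_n \E[\sup_{s\le T}\varrho(\calO^n_s, X^n_s)^2] < \infty$. Since $|\calO^n_t|_{\textnormal{\tiny{BL}}} \le \calO^n_t(\R) = \calO_0(\R) + \int_0^t \sigma(\calO^{n-1}_s, X^{n-1}_s)^2\,ds$, the sublinear growth $|\sigma|^2 \le K^2(1+\varrho)$ combined with BDG and Doob on the SDE part yields a Gr\"onwall estimate independent of $n$.

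The crucial step is the stability of the occupation flow under $|\cdot|_{\textnormal{\tiny{BL}}}$. For any $\phi$ with $\lVert\phi\rVert_{\textnormal{\tiny{BL}}} \le 1$ and two iterates, the dual representation \eqref{eq:MongeKantorovic} gives
$$\phi \cdot (\calO^{n+1}_t - \calO^{k+1}_t) = \int_0^t \bigl[\phi(X^{n+1}_s)\sigma(\calO^n_s, X^n_s)^2 - \phi(X^{k+1}_s)\sigma(\calO^k_s, X^k_s)^2\bigr] ds.$$
Splitting the integrand and applying $|\phi(y)-\phi(y')| \le |y-y'|$, $|\phi|\le 1$, along with $|\sigma_n^2-\sigma_k^2| \le (|\sigma_n|+|\sigma_k|)\cdot K\varrho(\calO^n_s-\calO^k_s, X^n_s-X^k_s)$ and the a priori moment bounds, one obtains
$$|\calO^{n+1}_t - \calO^{k+1}_t|_{\textnormal{\tiny{BL}}} \le C\int_0^t \Xi_s\bigl[|X^{n+1}_s - X^{k+1}_s| + \varrho(\calO^n_s - \calO^k_s, X^n_s - X^k_s)\bigr]ds$$
for a random $\Xi$ with controlled $L^2$ moments.

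Combining this flow estimate with standard BDG and Cauchy-Schwarz bounds on $X^{n+1} - X^{k+1}$ from the SDE (which give $\E[\sup_{s\le t}|X^{n+1}_s - X^{k+1}_s|^2] \le C\int_0^t \E[\sup_{u\le r}\varrho(\calO^n_u - \calO^k_u, X^n_u - X^k_u)^2] dr$), set $k=n-1$ and $\Psi_n(t) := \E[\sup_{s\le t}\varrho(\calO^{n+1}_s - \calO^n_s, X^{n+1}_s - X^n_s)^2]$. A Young-type splitting closes the loop into $\Psi_n(t) \le C_T\int_0^t \Psi_{n-1}(s)\,ds$; iteration yields $\Psi_n(T) \le (C_T T)^n/n!$, so $(\calO^n, X^n)$ is Cauchy in the $L^2$-sup norm induced by $\varrho$. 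The limit $(\calO,X)$ solves \eqref{eq:OSDEOcc}--\eqref{eq:OSDE} by passing to the limit in the coefficients via continuity and uniform integrability from the moment bounds; uniqueness follows by applying the same contraction estimate to any pair of solutions. I expect the main obstacle to be the quadratic coupling in step three: the $\sigma^2$ driving $d\calO_t$ yields a difference bounded by $|\sigma|$ itself, and since $|\sigma| \lesssim \varrho^{1/2}$, the cross-terms must be distributed carefully via Young's inequality so that no norm higher than $\varrho^2$ appears on the right-hand side of the Gr\"onwall loop.
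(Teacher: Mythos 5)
Your overall strategy (successive approximation in the parabolic norm built from $\varrho$) belongs to the same family as the paper's, which runs a Banach fixed-point argument for the map $\Psi(\calO,X)=\big(\int_0^{\cdot}\delta_{X_s}\sigma(\calO_s,X_s)^2ds,\ \int_0^{\cdot}b\,ds+\int_0^{\cdot}\sigma\,dW\big)$ on pairs $(\calO,X)$. The genuine gap is the step where you claim that "a Young-type splitting closes the loop into $\Psi_n(t)\le C_T\int_0^t\Psi_{n-1}(s)\,ds$." The obstacle you flag at the end is real and is not removed by Young's inequality: $\varrho^2(\mo,x)=|\mo|_{\textnormal{\tiny{BL}}}+|x|^2$ mixes the measure component at \emph{first} order with the spatial component at \emph{second} order. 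Your flow estimate, after Cauchy--Schwarz against the random factor $\Xi$, controls $\E\big[\sup_{s\le t}|\calO^{n+1}_s-\calO^{n}_s|_{\textnormal{\tiny{BL}}}\big]$ by a constant times $\int_0^t\big(\E[\varrho(\calO^n_s-\calO^{n-1}_s,X^n_s-X^{n-1}_s)^2]\big)^{1/2}ds$ plus a similar term in $\|X^{n+1}_s-X^n_s\|_{L^2}$, i.e.\ by $\int_0^t\sqrt{\Psi_{n-1}(s)}\,ds$, not by $\int_0^t\Psi_{n-1}(s)\,ds$. Writing $\sqrt{\Psi_{n-1}}\le\frac{\eta}{2}+\frac{1}{2\eta}\Psi_{n-1}$ gives $\Psi_n(t)\le C\eta t+C_\eta\int_0^t\Psi_{n-1}(s)\,ds$, and iterating this only yields $\limsup_n\sup_{t\le T}\Psi_n(t)\le C\eta T\,e^{C_\eta T}$, which does not vanish as $\eta\downarrow0$ because $C_\eta\uparrow\infty$. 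Hence the factorial bound $(C_TT)^n/n!$, the Cauchy property of the Picard scheme, and the uniqueness argument (which rests on the same estimate) are not established as written.

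The paper avoids exactly this homogeneity mismatch by not seeking a linear Gr\"onwall recursion at all: it restricts $\Psi$ to the unit ball $\calR_1$ of the norm $\vertiii{\varrho(\calO,X)}_T$, shows $\Psi(\calR_1)\subseteq\calR_1$ for $T$ small, and proves the deliberately asymmetric stability estimate $\lVert\Psi(\calO',X')-\Psi(\calO,X)\rVert^2\le C_2(T)\,\lVert(\calO'-\calO,X'-X)\rVert$ (squared on the left, first power on the right), which, combined with the boundedness of the iterates in $\calR_1$ and the smallness of $T$, produces the fixed point; the solution on $[0,T]$ is then obtained by concatenation over subintervals of length $\delta T$ with $C_1(\delta T)<1$. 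To salvage your scheme you would either have to change the distance (e.g.\ square the BL component, which then forces fourth-moment a priori bounds to handle terms like $\E[\Xi^2\varrho(\Delta)^2]$, and a corresponding reworking of the growth hypotheses), or trade the factorial decay for smallness of the horizon and boundedness of the iterates, which is precisely the paper's route. Your a priori moment bounds and the derivation of the BL stability estimate itself are fine and mirror the paper's Step 1 and Step 2 computations.
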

\begin{proof}
    See \cref{app:OSDE}. 
\end{proof}
\noindent
Notice the unusual linear growth condition for the \textit{squared} diffusion coefficient $\sigma^2$ in \eqref{eq:OSDEGrowth} instead of $\sigma$  as in the classical case \cite[Chapter IX]{RevuzYor}. This is because $\sigma^2(\calO,X)$ appears in the dynamics of the occupation flow.  

OSDEs
 builds on concepts found in self-attracting diffusions \cite{CranstonLeJan} inspired by  Brownian polymer models   \cite{DurrettRogers}.     
Therein, the drift is influenced by the
the calendar time occupation flow so as to create ergodic behaviors. 
These diffusions were further studied by Raimond \cite{Raimond}, and extended in \cite{Benaim2,Benaim3,Benaim4,Benaim1}   to   \textit{self-interacting diffusions}, also including self-repelling SDEs. While these  diffusions use the occupation flow in the drift,  OSDEs involve the occupation in the diffusion coefficient as well. 
Let us also mention the recent papers \cite{ChassagneuxPages,DuJiangLi}  linking self-interacting diffusions to ergodic McKean-Vlasov SDEs.

\begin{theorem}\label{thm:ItoOSDE}
    \textnormal{\text{(Itô's formula for  OSDEs)}} 
    Let $(\calO,X)$ be a strong solution of the OSDE \eqref{eq:OSDEOcc}$-$\eqref{eq:OSDE} with coefficients $b,\sigma$. 
    If $f\in \calC^{1,2}(\calD)$, then
\begin{align}\label{eq:itoOSDE}
        df(\calO_t,X_t)  &=   \big(\sigma^2\partial_{\mo} + b \partial_x + \frac{\sigma^2}{2}\partial_{xx}\big) f(\calO_t,X_t) dt + (\sigma \partial_x) f(\calO_t,X_t)dW_t,
\end{align} 
\end{theorem}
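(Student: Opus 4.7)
The plan is simply to apply the general Itô formula for occupied continuous semimartingales (\cref{thm:ItoOX}) to the strong solution $(\calO,X)$ and then substitute in the OSDE coefficients. Since $(\calO,X)$ solves \eqref{eq:OSDEOcc}$-$\eqref{eq:OSDE}, the process $X$ is a continuous semimartingale with decomposition
\begin{equation*}
dX_t = b(\calO_t,X_t)\,dt + \sigma(\calO_t,X_t)\,dW_t,
\end{equation*}
whose quadratic variation equals $d\langle X \rangle_t = \sigma(\calO_t,X_t)^2 \,dt$. Moreover, by the very definition of $\calO$ as the occupation flow of $X$, equation \eqref{eq:OSDEOcc} is automatically satisfied, so no separate consistency check is required beyond invoking \cref{thm:OSDE} for existence.

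Next I would apply \eqref{eq:itoOX} in \cref{thm:ItoOX} to the functional $f\in \calC^{1,2}(\calD)$ along $(\calO,X)$:
\begin{equation*}
df(\calO_t,X_t) = \Big(\partial_{\mo} + \tfrac{1}{2}\partial_{xx}\Big)f(\calO_t,X_t)\,d\langle X \rangle_t + \partial_x f(\calO_t,X_t)\,dX_t.
\end{equation*}
Substituting $d\langle X \rangle_t = \sigma^2(\calO_t,X_t)\,dt$ and $dX_t = b(\calO_t,X_t)\,dt + \sigma(\calO_t,X_t)\,dW_t$ and regrouping the finite-variation and martingale parts yields exactly the stated identity \eqref{eq:itoOSDE}.

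Essentially there is no obstacle here: the content of the theorem is a direct specialization of \cref{thm:ItoOX} to the semimartingale $X$ whose characteristics are governed by the OSDE. The only subtlety worth mentioning is that the linear growth assumption on $\sigma^2$ (rather than $\sigma$) in \eqref{eq:OSDEGrowth}, needed for \cref{thm:OSDE}, guarantees that $d\langle X \rangle_t = \sigma^2(\calO_t,X_t)\,dt$ is integrable along the solution path, so that all terms in the resulting Itô decomposition are well defined. Thus the present statement is best viewed as a corollary packaging \cref{thm:ItoOX} in the form most useful for the Feynman-Kac and PDE developments in the sequel.
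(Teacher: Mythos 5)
Your proposal is correct and matches the paper's argument, which proves the theorem as a direct consequence of \cref{thm:ItoOX} by substituting $d\langle X\rangle_t=\sigma^2(\calO_t,X_t)\,dt$ and the semimartingale decomposition of $X$ from the OSDE. The paper simply states this as an immediate corollary, so your slightly more detailed write-up (including the integrability remark) is the same route, just spelled out.
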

\begin{proof}
    This is a direct consequence of \cref{thm:ItoOX}.
\end{proof}
Next, we  extend \cref{thm:FK} to OSDEs. If $(\calO^{\mo,x},X^{\mo,x})$ denotes the solution of the OSDE \eqref{eq:OSDEOcc}$-$\eqref{eq:OSDE}   
given 
$(\calO_t,X_t) = (\mo,x)$, 
it is tempting to define 
$v(\mo,x) = \E^{\Q}[\varphi( \calO_{T}^{\mo,x},X_{T}^{\mo,x})]$  as in \eqref{eq:valueFctal}. 
However, 
 unlike Brownian motion or any continuous Gaussian martingale where $\calO_T(\R) = \langle X \rangle_T$ is  deterministic, the total mass  may well be random. Take for instance $b\equiv 0$ and $\sigma(\mo,x) = x$.   
The terminal condition is  therefore misspecified in general. 
A remedy is to replace the  horizon $T$ by the  time at which 
the total mass $\calO(\R)$ first exceeds a 
threshold, 
say $\Theta >0$.   
 With this new interpretation, the terminal date becomes stochastic and corresponds to the exit time 
\begin{equation}\label{eq:ExitTime}
    \tau_{\Theta}  =  \inf\{t\ge 0 \ :  (\calO_t,X_t) \notin \calD_\Theta\} =  \inf\{t\ge 0 \ :  \langle X \rangle_t \ge \Theta\}.
\end{equation} 
This is precisely the logic behind the DDS transformation in  \cref{rem:DDS}.  
We  now state a Feynman-Kac's theorem for OSDEs. 
Its proof is almost identical to  the first part in \cref{thm:FK}. 

\begin{theorem} \textnormal{\text{(Feynman-Kac)}}\label{thm:FKDirichlet}
  Let  $\varphi:\calD\to \R$ such that $\varphi(\calO_T,X_T) \in L^1(\Q)$ and  consider  the Dirichlet problem 
\begin{subnumcases}{}
\big(\sigma^2\partial_{\mo} + b \partial_x + \frac{\sigma^2}{2}\partial_{xx}\big)u(\mo,x) = 0, & \text{ on } \ \ $\calD_{\Theta},$ \label{eq:dirPDE}\\[0.2em] 
u(\mo,x)  = \varphi(\mo,x), & \text{ on } \hspace{-0.8mm} $\partial \! \calD_{\Theta},$
\label{eq:dirTerminal}
\end{subnumcases}
If $(\calO,X)$ is  a strong solution of  \eqref{eq:OSDEOcc}$-$\eqref{eq:OSDE}, $\E^{\Q}[\tau_\Theta]<\infty$,  and $v\in \calC^{1,2}(\calD)$ is a   classical solution  \eqref{eq:dirPDE}$-$\eqref{eq:dirTerminal}, then 
\begin{equation}\label{eq:FKDirichletExp}
    v(\mo,x) = \E^{\Q}[\varphi(\calO_{\tau_{\Theta}}^{\mo,x},X_{\tau_{\Theta}}^{\mo,x})],  \quad (\mo,x) \in \overline{\calD_{\Theta}}. 
\end{equation}
\end{theorem}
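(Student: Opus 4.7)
The proof strategy is the classical Feynman–Kac argument: apply It\^o's formula for OSDEs (\cref{thm:ItoOSDE}) to the candidate solution $v$ along the occupied process, observe that the drift vanishes because $v$ solves \eqref{eq:dirPDE}, and identify the expectation with the boundary data $\varphi$ at the exit time $\tau_\Theta$.

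More concretely, fix $(\mo,x)\in \overline{\calD_\Theta}$. If $(\mo,x)\in \partial\calD_\Theta$ then $\tau_\Theta=0$ a.s., and \eqref{eq:FKDirichletExp} follows immediately from the boundary condition \eqref{eq:dirTerminal}. Assume therefore that $(\mo,x)\in \calD_\Theta$ and let $(\calO^{\mo,x},X^{\mo,x})$ be the strong solution of \eqref{eq:OSDEOcc}$-$\eqref{eq:OSDE} started at $(\mo,x)$, whose existence is provided by \cref{thm:OSDE}. For brevity write $(\calO,X)=(\calO^{\mo,x},X^{\mo,x})$. Applying \cref{thm:ItoOSDE} to $v\in \calC^{1,2}(\calD)$ on $[0,t\wedge \tau_\Theta]$ gives
\begin{equation*}
  v(\calO_{t\wedge \tau_\Theta},X_{t\wedge \tau_\Theta}) \;=\; v(\mo,x) + \int_0^{t\wedge \tau_\Theta} \! \big(\sigma^2\partial_\mo + b\partial_x + \tfrac{\sigma^2}{2}\partial_{xx}\big)v(\calO_s,X_s)\,ds + \int_0^{t\wedge \tau_\Theta} \! (\sigma\partial_x v)(\calO_s,X_s)\,dW_s.
\end{equation*}
By the very definition of $\tau_\Theta$, the process $(\calO_s,X_s)$ lives in $\calD_\Theta$ for $s<\tau_\Theta$, so the Lebesgue integrand vanishes by \eqref{eq:dirPDE}. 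The remaining stochastic integral is a local martingale.

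To take expectations I localize: pick a sequence of stopping times $\tau_n\uparrow \infty$ such that $M_\cdot^n:=\int_0^{\cdot\wedge \tau_n}(\sigma\partial_x v)(\calO_s,X_s)\,dW_s$ is a true martingale (for example, $\tau_n=\inf\{s:|X_s|+\int_0^s(\sigma\partial_xv)^2\ge n\}$). Then
\begin{equation*}
  v(\mo,x) \;=\; \E^\Q\!\left[v(\calO_{t\wedge \tau_\Theta\wedge \tau_n},X_{t\wedge \tau_\Theta\wedge \tau_n})\right].
\end{equation*}
Since $\E^\Q[\tau_\Theta]<\infty$, $\tau_\Theta<\infty$ a.s., hence letting $t\to \infty$ and $n\to \infty$, the argument converges a.s. to $(\calO_{\tau_\Theta},X_{\tau_\Theta})\in \partial\calD_\Theta$, at which point $v=\varphi$ by \eqref{eq:dirTerminal}. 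The passage to the limit inside the expectation is then justified by dominated convergence using the integrability hypothesis on $\varphi(\calO_{\tau_\Theta},X_{\tau_\Theta})$ together with the continuity of $v$ (and, if needed, a uniform integrability assumption encoded in the $L^1$ statement of the theorem). This yields $v(\mo,x)=\E^\Q[\varphi(\calO_{\tau_\Theta}^{\mo,x},X_{\tau_\Theta}^{\mo,x})]$.

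The principal technical obstacle is the final limiting argument, which requires uniform integrability of the family $\{v(\calO_{t\wedge \tau_\Theta\wedge \tau_n},X_{t\wedge \tau_\Theta\wedge \tau_n})\}_{t,n}$. This is where the assumption $\E^\Q[\tau_\Theta]<\infty$ combines with the linear/polynomial growth of $b,\sigma$ from \eqref{eq:OSDEGrowth} (and an implicit growth bound on $v$) to give the necessary control via standard moment estimates for the SDE solution. Everything else is routine: the PDE kills the drift, the stochastic part integrates to zero in expectation after localization, and the boundary condition supplies the terminal value.
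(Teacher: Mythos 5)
Your proposal follows essentially the same route as the paper, which proves this result only by reference: it states that the argument is "almost identical to the first part of \cref{thm:FK}", i.e.\ apply It\^o's formula to $v$ along the occupied process, use \eqref{eq:dirPDE} to kill the drift so that $v(\calO_{\cdot\wedge\tau_\Theta},X_{\cdot\wedge\tau_\Theta})$ is a (local) martingale, and evaluate at the exit time via the boundary condition \eqref{eq:dirTerminal}. Your extra localization and uniform-integrability discussion is a faithful (and slightly more careful) filling-in of the same argument, so the approaches coincide.
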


The condition $\E^{\Q}[\tau_\Theta]<\infty$ in \cref{thm:FKDirichlet} holds for instance if 
$\sigma(\mo,x)\ge \underline{\sigma}$ for some  constant $\underline{\sigma}>0$. As $\calO_t(\R) = \langle X \rangle_t = \int_0^t\sigma(\calO_s,X_s)^2ds$, it is then immediate that $\tau_{\Theta} \le \Theta/\underline{\sigma}^2$. 

\begin{remark}
    When $b\equiv 0$, then \eqref{eq:dirPDE} reads 
$\sigma^2(\partial_{\mo} + \frac{1}{2}\partial_{xx})u = 0$ 
which is equivalent to the heat equation \eqref{eq:heatPDE}$-$\eqref{eq:heatTerminal} when  $\sigma$ is positive. 
This is not surprising since $X$ 
is then  a local martingale, hence a time-changed Brownian motion; see  \cref{rem:DDS} and \cref{ex:timer}. 
\end{remark}

We finish this section by stating Feynman-Kac's theorem for functionals of the calendar time occupation flow. To this end, consider  
\begin{subnumcases}{}
d\tilde{\calO}_t =  \delta_{X_t} dt,  & $\tilde{\calO}_0 \in \calM$, \label{eq:OSDEOccStd}\\[0.2em] 
 dX_t  =   b(\tilde{\calO}_t,X_t)dt + \sigma(\tilde{\calO}_t,X_t)dW_t, \; & $X_0\in \text{supp}(\tilde{\calO}_0).$
\label{eq:OSDEStd}
\end{subnumcases}
The wellposedness of \eqref{eq:OSDEOccStd}-\eqref{eq:OSDEStd} may be established through a straightforward adaptation of \cref{thm:OSDE}; see Theorem 4.2.14 in \cite{TissotThesis}. 

\begin{theorem} \textnormal{\text{(Feynman-Kac, calendar time)}}\label{thm:FKDirichletStd}
   Consider  $\varphi:\calD\to \R$ such that  $\varphi(\tilde{\calO}_T,X_T) \in L^1(\Q)$  and   the backward occupied PDE
\begin{subnumcases}{}
\big(\partial_{\mo} + b \partial_x + \frac{\sigma^2}{2}\partial_{xx}\big)u(\mo,x) = 0, & \text{ on } \ \ $\calD_{T},$ \label{eq:PDEStd}\\[0.2em] 
   u(\mo,x) = \varphi(\mo,x), & \text{ on } \hspace{-0.8mm} $\partial \! \calD_{T},$
\label{eq:dirTerminalStd}
\end{subnumcases}
If \eqref{eq:OSDEOccStd}$-$\eqref{eq:OSDEStd} admits  a  strong  solution and  \eqref{eq:PDEStd}$-$\eqref{eq:dirTerminalStd} has a  solution $v\in \calC^{1,2}(\calD)$, then 
\begin{equation}\label{eq:FKDirichletExpStd}
    v(\mo,x) = \E^{\Q}[\varphi(\tilde{\calO}_{T}^{\mo,x},X_{T}^{\mo,x})],  \quad (\mo,x) \in \overline{\calD_{T}}. 
\end{equation}
\end{theorem}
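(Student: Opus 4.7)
The plan is to adapt the classical Feynman--Kac argument to the occupied framework, using the calendar time It\^o formula \eqref{eq:itoOXstd}. Fix $(\mo, x) \in \overline{\calD_T}$ with $s := \mo(\R) \le T$, and let $(\tilde{\calO}, X) = (\tilde{\calO}^{\mo,x}, X^{\mo,x})$ denote the assumed strong solution of \eqref{eq:OSDEOccStd}$-$\eqref{eq:OSDEStd} started at $(\mo, x)$, so that $\tilde{\calO}_t(\R) = t$ for $t \in [s, T]$ and the state $(\tilde{\calO}_t, X_t)$ reaches $\partial\!\calD_T$ precisely at $t = T$.

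First I would apply It\^o's formula \eqref{eq:itoOXstd} to $v(\tilde{\calO}_t, X_t)$ on $[s, T]$, substitute $dX_t = b(\tilde{\calO}_t, X_t)\,dt + \sigma(\tilde{\calO}_t, X_t)\,dW_t$ and $d\langle X\rangle_t = \sigma(\tilde{\calO}_t, X_t)^2\,dt$, and regroup the Lebesgue terms to obtain
\begin{align*}
v(\tilde{\calO}_T, X_T) - v(\mo,x) &= \int_s^T \Big(\partial_{\mo} + b\partial_x + \tfrac{\sigma^2}{2}\partial_{xx}\Big) v(\tilde{\calO}_t, X_t)\,dt \\
&\quad + \int_s^T (\sigma \partial_x v)(\tilde{\calO}_t, X_t)\,dW_t.
\end{align*}
By hypothesis $v$ solves \eqref{eq:PDEStd} on $\calD_T$, so the drift integral vanishes identically and only the stochastic integral survives.

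To take expectations I would localize with the stopping times $\tau_n := \inf\{t \ge s : |X_t| \ge n\} \wedge T$. On $[s, \tau_n]$ the state $(\tilde{\calO}_t, X_t)$ lies in a bounded region of $\calD$ (the total mass of $\tilde{\calO}_t$ is at most $T$, and $|X_t| \le n$), so the joint continuity of $\sigma\partial_x v$ makes the stopped integrand bounded and the stopped stochastic integral a genuine martingale. Optional stopping then yields $v(\mo, x) = \E^{\Q}[v(\tilde{\calO}_{\tau_n}, X_{\tau_n})]$ for every $n$.

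The final step sends $n\to\infty$. The linear growth condition \eqref{eq:OSDEGrowth} rules out explosion of $X$, so $\tau_n \uparrow T$ almost surely, and the continuity of $v$ together with the boundary condition \eqref{eq:dirTerminalStd} give $v(\tilde{\calO}_{\tau_n}, X_{\tau_n}) \to \varphi(\tilde{\calO}_T, X_T)$ a.s. The hard part will be justifying the exchange of limit and expectation, since $v$ is not assumed bounded. The cleanest route is to impose (or deduce from regularity of $\varphi$) a polynomial growth bound of the form $|v(\mo,x)| \le C(1+|x|^p + |\mo|_{\textnormal{\tiny{BL}}}^p)$ and invoke the standard $L^p$ moment estimates for $X$ implied by \eqref{eq:OSDEGrowth}$-$\eqref{eq:OSDELip}, which together with the assumed $L^1(\Q)$ integrability of $\varphi(\tilde{\calO}_T, X_T)$ provide the uniform integrability of $\{v(\tilde{\calO}_{\tau_n}, X_{\tau_n})\}_n$. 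Identifying the limit then yields \eqref{eq:FKDirichletExpStd}; everything else mirrors the Brownian case \cref{thm:FK} and the Dirichlet case \cref{thm:FKDirichlet}.
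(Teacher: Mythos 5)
Your proof follows the same route as the paper's: apply the calendar-time It\^o formula \eqref{eq:itoOXstd} along the strong solution, use the PDE \eqref{eq:PDEStd} to annihilate the drift so that $v(\tilde{\calO},X)$ becomes a (local) martingale, and identify $v(\mo,x)$ with the expectation of the terminal value via \eqref{eq:dirTerminalStd}. The paper's own argument (given for \cref{thm:FK} and invoked for \cref{thm:FKDirichlet} and this statement) simply asserts the martingale property, so your localization and uniform-integrability step is just a more careful rendering of the same approach.
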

 The above results allows to characterize the pricing PDE of  financial derivatives contingent on  the calendar time occupation flow;  see \cref{sec:occPayoffs}. 

\begin{remark}
Similar to path-dependent PDEs, the occupied PDEs \eqref{eq:dirPDE}$-$\eqref{eq:dirTerminal},  \eqref{eq:PDEStd}$-$\eqref{eq:dirTerminalStd} typically do not admit a classical solution.
Fortunately, a viscosity theory  can be effectively established, even in the fully nonlinear case; see the follow-up paper \cite{SonerTissotZhang}. In particular, it is shown therein that,  under mild assumptions,  the value function $v$ in \eqref{eq:FKDirichletExp} is the unique viscosity solution of \eqref{eq:dirPDE}$-$\eqref{eq:dirTerminal}.
\end{remark}

\section{Stopping Spot Local Time}\label{sec:spotLT}

This section intends to study the spot local time $L_t^{X_t}$ in more depth, particularly in the context of optimal stopping (OS). We dwell on this problem as we believe it is highly  challenging both theoretically and numerically. 
Hence, the stopping of spot local time can be used as a  benchmark for algorithms solving  path-dependent OS  problems. 
For instance, it would be worth checking the performance  on this example 
of  recent methods   involving deep learning and/or the path signature \cite{Bayer,bayraktar,Becker}.  
See \cref{sec:American} for further numerical considerations. 

\subsection{Optimal Stopping}\label{sec:OS}
Let us first adapt standard results in optimal stopping to occupied processes. 
Suppose  that $X$ is Brownian motion and refer to \cref{rem:DDS} for the general semimartingale case.  Given a reward functional $\varphi:\calD \to \R_+$ such that $\sup_{t\le T} \varphi(\calO_t,X_t) \in L^1(\Q)$ and   exercise dates $\calT \subseteq [0,T]$ containing $T$, consider the \textit{optimal stopping (OS) problem} \cite{PeskirShiryaev},  
\begin{equation}\label{eq:OS}
    \sup_{\tau \in \vartheta(\calT)} \E^{\Q}[\varphi(\calO_{\tau},X_\tau)],
\end{equation}  
where $\vartheta(\calT)$ is the set of $\calT-$valued $(\F,\Q)-$stopping times. 
Define the \textit{value function}, 
    \begin{equation}\label{eq:valuefct}
    v(\mo,x) = \sup_{\tau \in \vartheta(\calT_t)} \E^{\Q}_{\mo,x}[\varphi(\calO_{\tau}, X_{\tau}) ], \quad \calT_t = \calT \cap \ [t,T], \quad t = \mo(\R).
\end{equation} The \textit{stopping region} is traditionally defined as the coincidence set  $\calS_T = \{(\mo,x) \in  \calD_T : v(\mo,x) = \varphi(\mo,x)\}$. In particular,   $\partial\calS = \partial\calD_T$ since $T\in \calT$. 
As it is challenging to  
 represent $\calS$ either mentally or  visually, 
we can regard the stopping region as a subset of $\R$  
for a given occupation measure, i.e.,    $\calS(\mo) = \{x\in \text{supp}(\mo) \ : \ (\mo,x) \in \calS \}$.
For every $t
\in [0,T]$ , this gives the pathwise partition 
$$\{X_s  :  s\le t\} = \text{supp}(\calO_t) = \calS(\calO_t) \cup \calC(\calO_t),$$
with the \textit{continuation region} $\calC(\mo) = \{x \in \text{supp}(\mo)\ : \ v(\mo,x) > \varphi(\mo,x)\}$. See \cref{fig:stopcont} and  \cite[Chapter 7]{Detemple} for a related comment.  


The Dynamic Programming Principle (DPP) states that for all $\varsigma \in \vartheta(\calT_t)$, 
\begin{align}\label{eq:DPP}
    v(\mo,x) &= \sup_{\tau \in \vartheta(\calT_t)} \ \E^{\Q}_{\mo,x}[ \varphi(\calO_{\tau},X_{\tau}) \mathds{1}_{\{ \tau < \varsigma \}} + v(\calO_\varsigma,X_{\varsigma},)\mathds{1}_{\{ \tau \ge \varsigma\}} ].  
\end{align}
Let us  recall how to    
 derive the  dynamic programming equation satisfied by $v$. 
 If $v\in \calC^{1,2}(
 \calD)$ and  $Y_t := v(\calO_t,X_t)$ denotes the \textit{Snell envelope} of $\varphi$,   
then  Itô's formula (\cref{thm:ItoOX}) gives 
\begin{align} \label{eq:itoSnell}
    dY_t &= \Big( \partial_\mo + \frac{1}{2}\partial_{xx} \Big) v(\calO_t,X_t) dt + \partial_x v(\calO_t,X_t)dX_t. 
\end{align}
It is classical that the Snell envelope $Y$ is the smallest supermartingale dominating  $\varphi$ \cite{PeskirShiryaev}.  
In particular,  the drift in \eqref{eq:itoSnell} is $\Q-$a.s. non-positive and equal to zero outside of the stopping region $\calS$. This  leads to the \textit{dynamic programming equation}, 
 \begin{subnumcases}{}
\Big( \partial_\mo + \frac{1}{2}\partial_{xx} \Big) v = 0  & \; \text{on} \; $\calD_T$, \label{eq:DPECont}\\[0.2em] 
v = \varphi & \; \text{on} $\ \calS_T \cup \ \partial \calD_T. $
\label{eq:DPEStop}
\end{subnumcases}
The present discussion is of course formal as the value function is typically not regular enough to be a classical solution of \eqref{eq:DPECont}$-$\eqref{eq:DPEStop}.  Nevertheless, one can adapt the arguments from our companion paper \cite{SonerTissotZhang} to show that $v$ is the unique viscosity solution  of the obstacle problem \eqref{eq:DPECont}$-$\eqref{eq:DPEStop}. 





 \begin{figure}[t]
\centering
\caption{Occupation flow $\calO$ and stopped spot local time $L_{\tau}^{X_{\tau}}$. Two simulations.}
\vspace{-2mm}

\begin{subfigure}[b]{0.47\textwidth}
     \centering
\includegraphics[height=2.1in,width=2.6in]{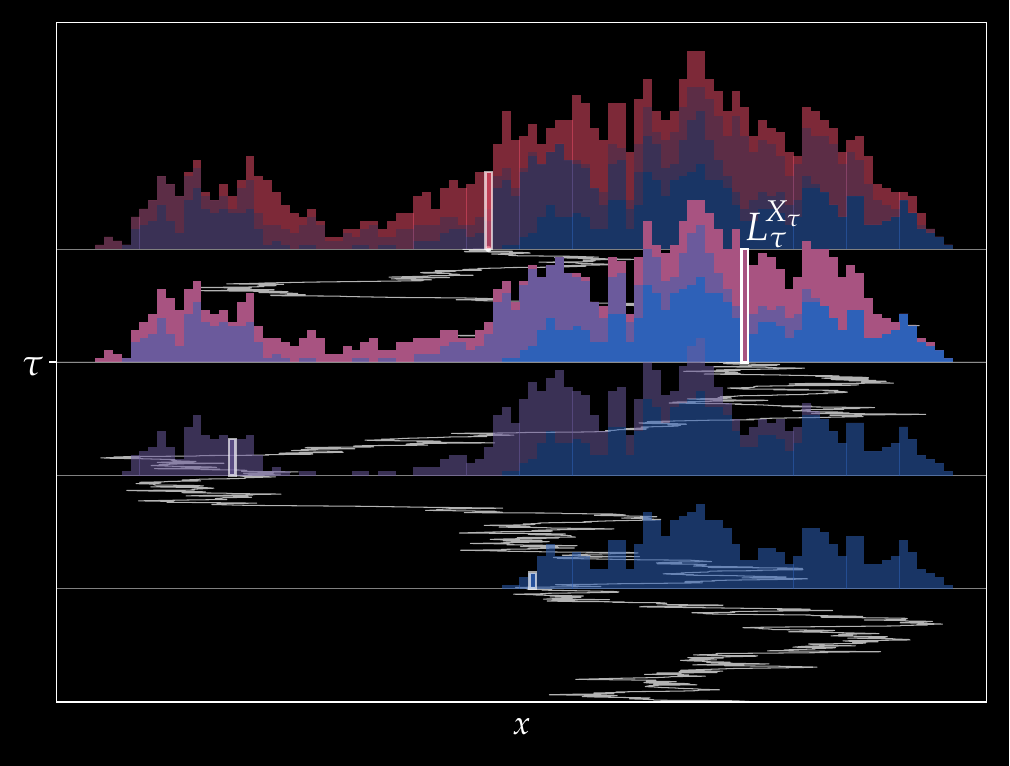}
\end{subfigure}
\begin{subfigure}[b]{0.47\textwidth}
     \centering
\includegraphics[height=2.1in,width=2.6in]{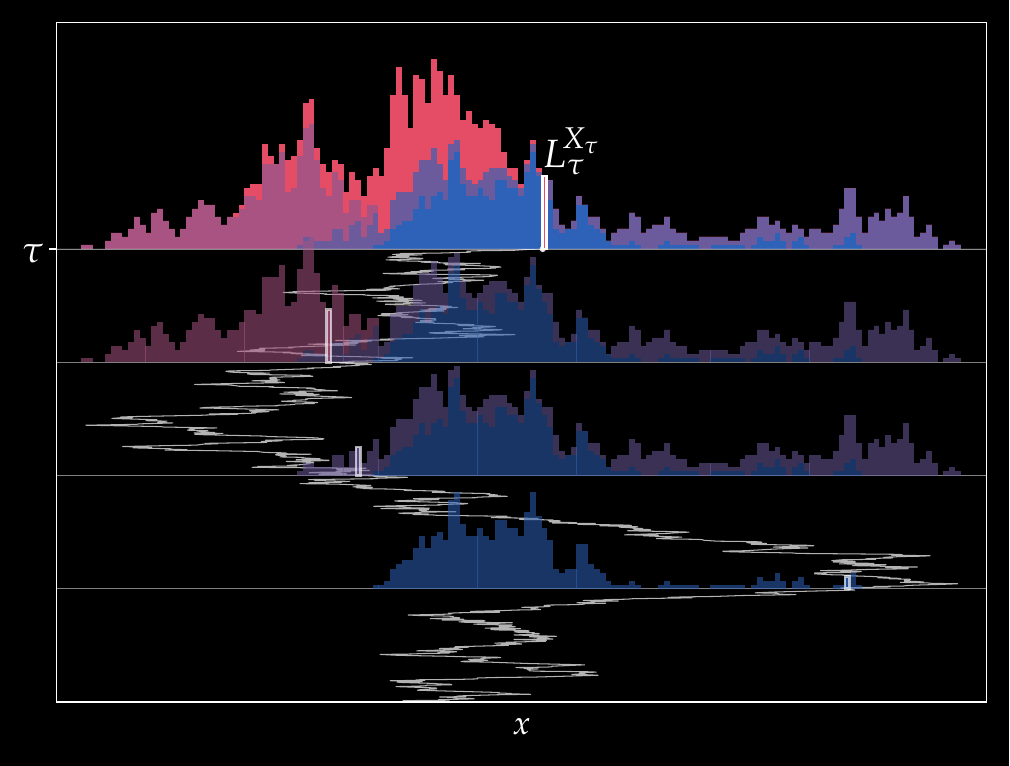}
 \end{subfigure}
\vspace{5mm}
 \label{fig:OSLT}
 \end{figure}
 
\subsection{Spot Local Time and Early Exercise Premium}\label{sec:spotLTpremium}

We are now set to study  the OS problem, 
\begin{equation}\label{eq:OSLT}
  \text{maximize } \;  v(\tau) := \E^{\Q}[L_{\tau}^{X_\tau}] \;\; \text{ over } \;\tau \in \vartheta(\calT).  
\end{equation}
The spot local time   corresponds to the reward $\varphi(o,x) = \frac{do}{d\lambda}(x)$ which is defined on $\calD^{\lambda}$, namely those elements $(o,x)$ of $ \calD$ such that $o\in \calM^{\lambda}$. Clearly,  $\sup_{t\le T} \varphi(\calO_t,X_t) \in L^1(\Q)$ since  $\varphi(\calO_t,X_t) \le \lVert \calO_T \rVert_{\infty} =  \sup_{x\in \R} L_T^{x} $ which is bounded in $L^p(\Q)$ for all $p\ge 1$ \cite{BarlowYor}. Hence the above OS  problem is well-defined. 
Within the family of metrics $\{\frakm_p :  p \in [1,\infty]\}$ introduced in \cref{sec:metrics},  note that  $\varphi$ is continuous \textit{only} with respect to $\frakm_{\infty}$. 

It is intuitive that \eqref{eq:OSLT} has a positive \textit{early exercise premium} when $\calT \ne \{T\}$ (recall our assumption $T\in \calT$). See also \cref{fig:OSLT}. 
In other words, there exists  $\tau \in \vartheta(\calT)$ such that $v(\tau) $ strictly dominates 
$v(T)=\E^{\Q}[L_T^{X_T}]$. 
First, let us compute 
the latter
explicitly. 

\begin{proposition} \label{prop:EUPrice}
Let $X$ be Brownian motion and  $T>0$. Then  $\E^{\Q}[L_T^{X_T}] = \E^{\Q}[L_T^{0}]  = \sqrt{\frac{2}{\pi}T}$.
\end{proposition}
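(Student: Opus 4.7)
The plan is to handle the two equalities separately: a distributional/time-reversal argument for $\E^{\Q}[L_T^{X_T}] = \E^{\Q}[L_T^0]$, and Tanaka's formula for $\E^{\Q}[L_T^0]=\sqrt{2T/\pi}$.

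\textbf{Step 1 (time reversal).} Introduce the time-reversed, spot-centered process $Y_s := X_{T-s}-X_T$, $s\in[0,T]$. Since $X$ is a standard Brownian motion, the process $s\mapsto X_T - X_{T-s}$ is also a standard Brownian motion, and by the symmetry $B\stackrel{d}{=}-B$, so is $Y$.
The plan is to identify $L_T^{X_T}(X)$ with $L_T^0(Y)$ pathwise. Starting from the standard approximation
\begin{equation*}
L_T^{X_T}(X) \;=\; \lim_{\varepsilon\downarrow 0}\tfrac{1}{2\varepsilon}\int_0^T \mathds{1}_{\{|X_u-X_T|\le \varepsilon\}}\,du,
\end{equation*}
the change of variable $u=T-s$ yields
\begin{equation*}
L_T^{X_T}(X) \;=\; \lim_{\varepsilon\downarrow 0}\tfrac{1}{2\varepsilon}\int_0^T \mathds{1}_{\{|Y_s|\le \varepsilon\}}\,ds \;=\; L_T^0(Y) \quad \Q\text{-a.s.}
\end{equation*}
Taking expectations and using that $Y$ is a Brownian motion gives $\E^{\Q}[L_T^{X_T}] = \E^{\Q}[L_T^0]$.

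\textbf{Step 2 (Tanaka).} For Brownian motion, Tanaka's formula reads
\begin{equation*}
|X_T| \;=\; |X_0| + \int_0^T \mathrm{sgn}(X_s)\,dX_s + L_T^0.
\end{equation*}
Since $X_0=0$ and the stochastic integral is an $L^2$ martingale (the integrand being bounded), taking expectations yields $\E^{\Q}[L_T^0] = \E^{\Q}[|X_T|]=\sqrt{2T/\pi}$, where the last equality is the mean of a half-normal with variance $T$.

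\textbf{Potential obstacles.} The one subtle point is the pathwise identification in Step~1: the existence of the limit defining $L_T^{X_T}(X)$ from the occupation-based formula needs to be justified, but this follows from the joint continuity of the Brownian local time field $(t,x)\mapsto L_t^x$ (\cite[Chapter VI]{RevuzYor}), which guarantees that the limit exists $\Q$-a.s.\ for every deterministic $x$ and, by continuity in $x$, for the random level $X_T$ as well. The rest of the argument is bookkeeping.
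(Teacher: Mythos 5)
Your proof is correct and follows essentially the same route as the paper: time-reverse about the terminal value to identify $L_T^{X_T}$ pathwise with the local time at zero of the Brownian motion $Y_s = X_{T-s}-X_T$, then compute $\E^{\Q}[L_T^0]$. The only cosmetic differences are that the paper performs the pathwise identification by invoking its chronology-invariance result (\cref{thm:chronology}) where you use a direct change of variables in the occupation-time approximation (justified, as you note, by the jointly continuous local time field together with the occupation time formula), and it cites L\'evy's identity $L_T^0 \stackrel{d}{=} |X_T|$ where you take expectations in Tanaka's formula — both equally valid.
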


\begin{proof} Consider the time reversed process $\overleftarrow{X}_s = X_{T-s}-X_T$, $s\in [0,T]$ and denote its local time  at $0$ by $\overleftarrow{L^0}$. By the chronology invariance   of occupation functionals (\cref{thm:chronology}), then   $L_T^{X_T}  = \overleftarrow{L^0_T}$ pathwise. 
    Moreover, it is classical that  $\overleftarrow{X}$ is also Brownian motion with respect to its own filtration, hence $\E^{\Q}[L_T^{X_T}] = \E^{\Q}[\overleftarrow{L^0_T}] = \E^{\Q}[L_T^{0}]$. The last equality in the statement  follows from  L\'evy's classical result that $L_T^{0}$ and $|X_T|$ have the  same law. 
\end{proof}
Next, we  verify the existence of an  early exercise premium  in the minimal case   $\calT = \{t,T\}$. 
Invoking the dynamic programming principle \eqref{eq:DPP} with $\varsigma = T$, then $Y_t :=   v(\calO_{t},X_t) = L_t^{X_t} \vee C_t$ with the continuation value $C_t =\E^{\Q}[L_T^{X_T} | \calF_t]$ derived in \cref{ex:heatspotLT}, namely 
\begin{equation}\label{eq:contVal}
    C_t = \int_{\R} L_t^y\  \phi_{T-t}(y-X_t) dy + \E^{\Q}[L_{T-t}^0], \quad \phi_s(x) = \frac{e^{-\frac{x^2}{2s}}}{\sqrt{2\pi s}}.
\end{equation}
We can finally approximate the initial value $Y_0 = \E^{\Q}[Y_t]$ using Monte Carlo simulations. For the numerical computation of the local times in \eqref{eq:contVal}, see \cref{rem:locTimeNum}. 
The optimal strategy is classically to stop at $t$ if the intrinsic value $L_t^{X_t}$ is greater than or equal to $C_t$; see \cref{fig:stopcont}. As the heat kernel $\phi_{T-t}$ concentrates around zero (especially when $T-t$ is small), we gather from  \eqref{eq:contVal} that the field $(L_t^x)_{x\in \R}$ is relevant for the stopping decision \textit{only} in a neighborhood of the spot.  This observation is crucial in \cref{sec:American} when truncating the occupation measure.  

\begin{figure}[t]
\centering
\caption{Spot local time OS problem with $\calT = \{t,T\}$.   Stopping region (red) and continuation region (green) at  $t$ depending whether the intrinsic value $L_t^x$ (if $X_t = x$) exceeds the continuation value  given in \eqref{eq:contVal}.   }
 \includegraphics[height=2.2in,width=2.9in]{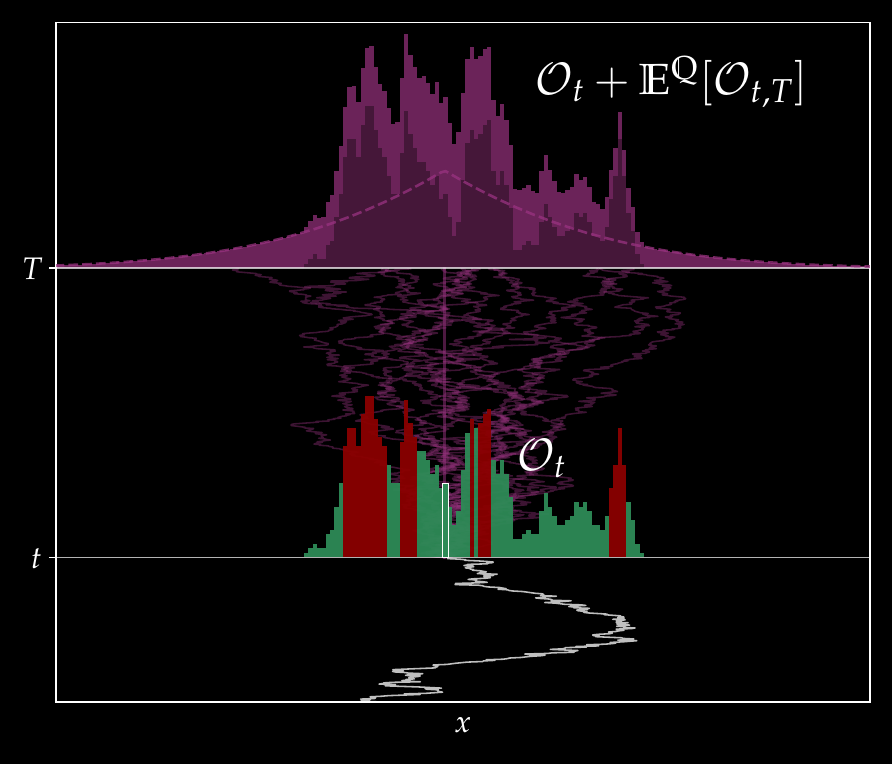}
 \label{fig:stopcont}
 \end{figure}

\cref{tab:earlyPremiumBermud2} summarizes the results in the case $T = 2t = 1$.   We simulate $2^{14} =16,384$ Monte Carlo paths on  a regular time grid with $N=400$ steps and set  $\varepsilon  = \sqrt{T/N} = 0.05$; see \cref{rem:locTimeNum}.  The early exercise premium is indeed significantly positive. 

\begin{table}[H]
    \centering
        \caption{Existence of an early exercise premium. Value of the OS problem \eqref{eq:OSLT} with $\calT = \{0.5,1\}$ and 
        $\calT = \{1\}$.    $N=400$,  $\varepsilon  = 0.05$, and $2^{14}$ Monte Carlo (MC) simulations. }
    \begin{tabular}{lcc}
      Exercise dates  &  Value & MC Error\\ \hline\hline
    $\calT = \{0.5,1\}$  & 0.8455 & 0.0050\\
     $\calT = \{1\}$  & 0.7979 & -   
    \end{tabular}
\label{tab:earlyPremiumBermud2}
\end{table}

\begin{remark}\label{rem:locTimeNum}
    \textnormal{(Numerical approximation of Brownian local time)}  Define 
\begin{equation}\label{eq:varphiEps}
     \varphi^{\varepsilon}(\calO_t,x) := \frac{\calO_t(B_{\varepsilon}(x))}{2\varepsilon}  = \frac{1}{2\varepsilon}\int_0^t \mathds{1}_{\{|X_s-x| \le \varepsilon\}}ds.
 \end{equation}
 In light of \eqref{eq:LTdef}, $\varphi^{\varepsilon}(\calO_t,x)$  converges to $ L_{t}^{x}$ pathwise. In the present context of optimal stopping, we shall also see  in \cref{sec:shrinkCorridor} that  
 estimating local time with
 narrow corridors  is legitimate in the sense that the value of the OS problem  \eqref{eq:OS} with $\varphi = \varphi^{\varepsilon}$  converges to the value of \eqref{eq:OSLT}. 
 If $X$ is a simulated Brownian path on a regular time grid $t_n = n\delta t$, $\delta t = \frac{T}{N}$,  we then discretize the  integral in \eqref{eq:varphiEps} to obtain
    $L_t^x \approx \frac{1}{2\varepsilon}\sum_{n=1}^N \mathds{1}_{\{|X_{t_n}-x| \le \varepsilon\}}\delta t. $
This estimate  induces   constraints on  $\varepsilon$ once $\delta t$ is fixed. 
  Intuitively, $\varepsilon \gg \delta t $,  and from our experiments, adequate choices are  
 $\varepsilon = c\sqrt{\delta t}$, e.g. $c = 1$, matching the scale of Brownian motion. 
\end{remark}

\subsection{Numerical Resolution in the case $\calT = [0,T]$} \label{sec:American}
We now turn to the case $\calT = [0,T]$. 
As the backward induction  in the previous section does not apply when $|\calT| > 2$ due to path-dependence, alternative methods  must be considered.    First, we outline a simple stopping rule giving a lower bound on the value of \eqref{eq:OSLT}. 

For fixed $\iota\in [0,T]$, interpreted as an inspection date, define\footnote{It is easy to show that the maximum of $x\mapsto L_\iota^x$ is attained and $\Q-$almost surely unique.}     $$\tau_\iota= \inf \{ t\in [\iota,T] \ : \ X_t = X^*_\iota\} \wedge T, \quad X^*_\iota = \text{argmax}_{x}L_\iota^x. $$
In other words, it is the hitting time after $\iota$ of the level which maximizes local time on $[0,\iota]$. 
If $\sigma^* = \sup \{s\in [0,T]  :  X_s = X^*_\iota \} \wedge 0$ denotes the last visit of the optimal level $X_\iota^*$,  then  on $\{\sigma^* \ge \iota\}$, the reward $L_{\tau_\iota}^{X_\iota^*}$ is precisely $L_{\iota}^{X_\iota^*} = \sup_x L_{\iota}^{x}$. 
This gives the decomposition, 
$$ \E^{\Q}[L_{\tau_\iota}^{X_{\tau_\iota}}] =   p_\iota\E^{\Q}[\sup_x L_{\iota}^{x}\ | \ \sigma^* \ge  \iota ] 
 + (1-p_{\iota})  \E^{\Q}[L_{T}^{X_{T}} \ | \ \sigma^* < \iota], \quad p_\iota = \Q(\sigma^* \ge \iota).$$
 There is a clear tradeoff between the reward $\sup_x L_{\iota}^{x}$ (increasing in $\iota$) and the probability $p_\iota$ to hit the  level $X^*_\iota$ again  (decreasing in $\iota$).
\cref{tab:earlyPremium2} summarizes the numerical results for values of  $\iota$. We use $T=1$ and  $2^{14} = 16,384$ simulated Brownian trajectories on a regular time grid with $400$  steps. To compute $X_{\iota}^*$, we use a regular space grid  of $[-2,2]$ with  $200$ subintervals. Despite the simplicity of the present stopping rule,  a  significant premium is observed. 
Note that the inspection date $\iota =0.7$ yields nearly a $40\%$ improvement over the case $\calT = \{T\}$  seen in \cref{tab:earlyPremiumBermud2}.  

\begin{table}[h]
    \centering
        \caption{Value of the stopping rule $\tau_{\iota} = \inf \{ t\in [\iota,T] \ : \ X_t = X^*_\iota\}$, $X^*_\iota = \text{argmax}_{x}L_\iota^x$ for varying inspection date $\iota$ and  parameters  $T=1$,  $N=400$, $\varepsilon =0.05$. }
    \begin{tabular}{ccc}
      Inspection date  &  Value & Monte Carlo Error\\ \hline\hline
       $\iota= 0.5$   & 1.0404 & 0.0035 \\ 
       $\iota= 0.6$   & 1.0897 & 0.0040\\ 
       $\iota= 0.7$   & 1.1116 & 0.0046\\ 
       
         $\iota= 0.8$   &  1.0892 & 0.0052\\ 
  $\iota= 0.9$   & 1.0182 & 0.0056\\    
    \end{tabular}

    \label{tab:earlyPremium2}
\end{table}


Next, we implement an approach à la Longstaff-Schwartz  where we refer to the original paper \cite{LS} and \cref{app:LSMC} for further details. Given the time grid $t_n = n\delta t, \ \delta t = \frac{T}{N}$, a  crucial step in \cite{LS} is the computation of the continuation value $C_{t_{n}} = \E^{\Q}[Y_{t_{n+1}} |\calF_{t_n}]$ where $Y = v(\calO,X)$ denotes the  Snell envelope of $L^{X}$. 
 Although we can write $C_{t_{n}} = c(\calO_{t_n},X_{t_n})$, $c:\calD \to \R$, using the Markov property of  $(\calO,X)$ (\cref{prop:Markov}), it is naturally challenging to estimate $c$  as the occupation measure is infinite-dimensional. Therefore, we carry out a space discretization to summarize $\calO$ with finitely many features. 

 
Given 
$\nu = \frac{1}{6}\big(\delta_{-\sqrt{3}} + 4\delta_0 + \delta_{\sqrt{3}}\big)$ matching the first five moments of $\mathcal{N}(0,1)$, introduce the random walk 
$X_{t_{n+1}} = X_{t_n} + Z_{n+1} \sqrt{\delta t}, \ Z_{n+1} \sim \nu,$  
taking values on a  trinomial tree with nodes $x_{m} = m\sqrt{3\delta t }$  
and   $|m| \le n$ at time $t_n$. 
In this model, the occupation measure  becomes finite-dimensional, namely 
\begin{equation}\label{eq:discreteOcc}
    \calO_{t_n} = (L_{t_n}^{x_{-n}}, \ldots, L_{t_n}^{x_{n}}), \quad L_{t_n}^{x_{m}} = \frac{\calO_{t_n}(B_{\varepsilon}(x_m))}{2\varepsilon} := \frac{1}{2\varepsilon}\sum_{i=1}^n  \mathds{1}_{\{X_{t_i} \ = \   x_m\}} \delta t.
\end{equation}
where $\varepsilon = \frac{x_{m+1}-x_m}{2} = \frac{\sqrt{3\delta t }}{2}$ in line with \cref{rem:locTimeNum}. 
The above expression for $L_{t_n}^{x_{m}}$
also reveals our  motivation behind the chosen  trinomial structure:  the local time process 
may accumulate at every step instead of every other step  in a (recombining) binomial tree.  

Notice that  the dimension of $\calO_{t_n}$ in \eqref{eq:discreteOcc} still expands with time, which becomes problematic for large $N$. Therefore,  we  introduce the truncation $\calO_{t_n}^{\bar{m}} = (L_{t_n}^{x_{m-\bar{m}}},\ldots, L_{t_n}^{x_{m+\bar{m}}} )$, $\bar{m} \in \N$, around the spot 
    $x_m = X_{t_n}$. The resulting pair $(\calO_{t_n}^{\bar{m}},X_{t_n})$ is of dimension $2\bar{m}+2$, hence a least square approach can be implemented for small values of $\bar{m}$ (say, less than $5$). 
Note that this is tailored to the payoff $L^X$: similar to \eqref{eq:contVal},   the continuation value primarily depends on the local time close to the spot. 
Evidently, other rewards may lead to other  truncations. 

We then follow the  least square Monte Carlo method  \cite{LS}, 
expounded in \cref{alg:LSMC} for completeness. 
We perform an  offline and online phase as  in \cite[
Remark 6.8]{GuyonHL} consisting of  $2^{11}$ and $2^{14}$ simulations,  respectively. See also \cref{app:LSMC} for other implementation details. The results are given in \cref{tab:LSMC}. Note that adding the local time of adjacent nodes ($\bar{m} = 1$) is indeed benefical, while  the value improves slowly beyond that. Comparing with \cref{tab:earlyPremium2}, we also see that the least square Monte Carlo approach indeed outperforms the more  naive inspection strategy. 
\begin{table}[h]
    \centering
        \caption{Value of OS problem \eqref{eq:OSLT} with  $\calT = \{n/N : n=1,\ldots,N\}$, $N = 400$, using Least square Monte Carlo for varying truncation level $\bar{m}$.  }
    \begin{tabular}{cccc}
      Truncation level $(\bar{m})$ &  Value & MC Error & Run time \\ \hline\hline
       $0$   & 1.1916 & 0.0044 & 182 sec. \\
        $1$   & 1.2180 & 0.0031 & 187 sec. \\ 
      $2$   & 1.2252 & 0.0030 & 194 sec. \\
       $3$   &  1.2277 & 0.0030 & 197 sec. \\
       $5$   & 1.2296 & 0.0030 & 210 sec. \\
      \end{tabular}
    \label{tab:LSMC}
\end{table}

 \subsection{Shrinking the Corridor}\label{sec:shrinkCorridor} 
Let  $\varphi^{\varepsilon}(\mo,x) = \frac{\mo(B_{\varepsilon}(x))}{2\varepsilon}$, 
$\varepsilon >0$ as   in \eqref{eq:varphiEps}. Hence $\varphi^{\varepsilon}(X_t,\calO_t)$ is a pathwise approximation of the spot local time $L_t^{X_t}$.  
We aim to show that the value $v_{*}^{\varepsilon}$ 
of the optimal stopping problem 
\begin{equation}\label{eq:OSLTEPS}
    \text{maximize } \;  v^{\varepsilon}(\tau) := \E^{\Q}[\varphi^{\varepsilon}(\calO_{\tau},X_{\tau})] \;\; \text{ over } \;\tau \in \vartheta(\calT),
\end{equation}
converges to the value $v_{*}$ of \eqref{eq:OSLT} in the extreme cases $\calT = \{T\}$ 
and $\calT = [0,T]$. 
We start with the former, where   $v_{*}^{\varepsilon}= v^{\varepsilon}(T)$ and $v_{*}= v(T)$.  

\begin{figure}
    \centering
       \caption{Verification of \cref{prop:vEpsEU} and \cref{cor:conv}  with $T=1$.  Estimate of   $\varepsilon \mapsto v_*^{\varepsilon}$  using the inspection strategy (\cref{sec:American}), $N=400$ time intervals and $2^{14}$  simulations. 
}\includegraphics[height = 2.1in,width = 2.85in]{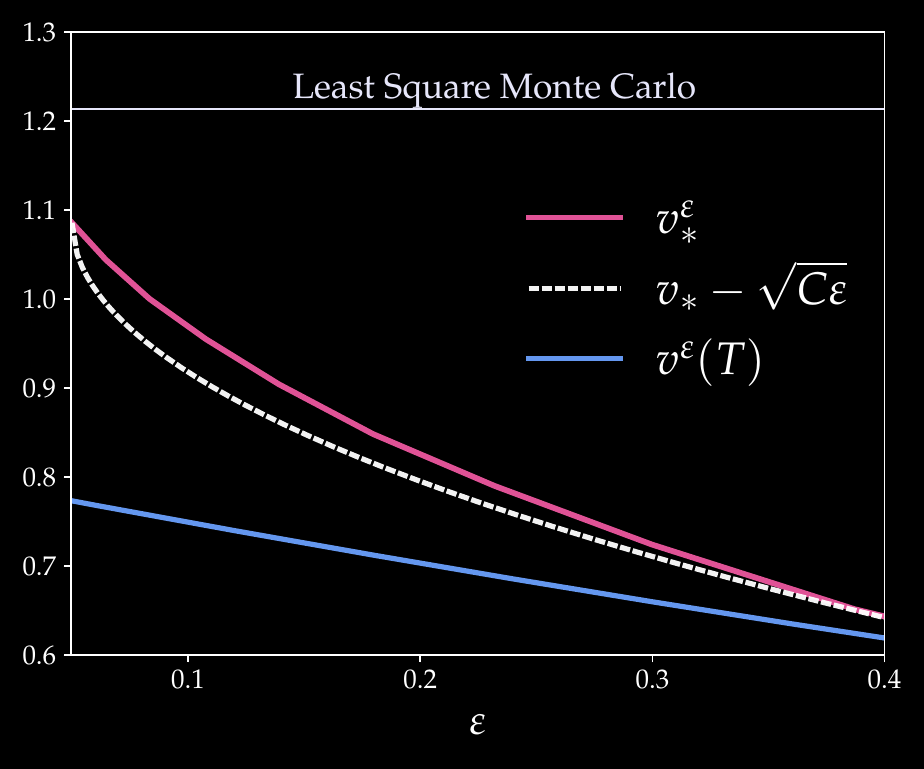}
    \label{fig:valueEps}
\end{figure}

\begin{proposition}\label{prop:vEpsEU}
    We have 
    $v^{\varepsilon}(T) \uparrow v(T)$ as  $\varepsilon \downarrow 0$. 
Precisely, $v^{\varepsilon}(T) = v(T) - \frac{\varepsilon}{2} + \frac{\varepsilon^2}{\sqrt{18\pi T}} (1 +  r_\varepsilon)$ with  $\lim_{\varepsilon\downarrow 0} r_\varepsilon = 0$.
\end{proposition}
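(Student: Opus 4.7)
The plan is to reduce the claim to an explicit deterministic integral and then extract its asymptotics as $\varepsilon\downarrow 0$. As a first step I would invoke the time reversal used in the proof of \cref{prop:EUPrice}: setting $\overleftarrow{X}_s = X_{T-s}-X_T$, which is again a Brownian motion starting at $0$ under $\Q$, the reward becomes pathwise
$$\varphi^{\varepsilon}(\calO_T,X_T)=\frac{1}{2\varepsilon}\int_0^T\mathds{1}_{\{|X_s-X_T|<\varepsilon\}}\,ds=\frac{1}{2\varepsilon}\int_0^T\mathds{1}_{\{|\overleftarrow{X}_s|<\varepsilon\}}\,ds.$$
Taking expectations and applying Fubini yields the closed form
$$v^{\varepsilon}(T)=\frac{1}{2\varepsilon}\int_0^T \Q(|B_s|<\varepsilon)\,ds=\frac{1}{2\varepsilon}\int_0^T\bigl[2\Phi(\varepsilon/\sqrt{s})-1\bigr]\,ds,$$
with $\Phi$ the standard normal CDF and $B$ a standard Brownian motion.

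To obtain the rate, I would rescale via $y=s/\varepsilon^{2}$, giving $v^{\varepsilon}(T)=\tfrac{\varepsilon}{2}\int_0^{T/\varepsilon^2}[2\Phi(1/\sqrt{y})-1]\,dy$, then integrate by parts (differentiating $2\Phi(1/\sqrt{y})-1$, integrating $dy$). This produces a boundary term $\frac{T}{\varepsilon^2}[2\Phi(\varepsilon/\sqrt{T})-1]$, which the Taylor expansion $2\Phi(x)-1=\sqrt{2/\pi}(x-x^3/6)+O(x^5)$ resolves as $\sqrt{2T/\pi}/\varepsilon-\sqrt{2/(\pi T)}\,\varepsilon/6+O(\varepsilon^3)$, plus an integral that, under the substitution $u=1/\sqrt{y}$, reduces to $2\int_{\varepsilon/\sqrt{T}}^{\infty}\phi(u)u^{-2}du$. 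The delicate step lies here: since $\phi(u)/u^2$ is not integrable at $0$, I would split $\phi(u)=\tfrac{1}{\sqrt{2\pi}}+[\phi(u)-\tfrac{1}{\sqrt{2\pi}}]$. The first piece gives the exact divergent term $\sqrt{T/(2\pi)}/\varepsilon$, while the second, integrated by parts once more (against $du/u^2$), reduces to $-\int_{\varepsilon/\sqrt{T}}^{\infty}\phi(u)du=-\tfrac{1}{2}+\tfrac{\varepsilon}{2\sqrt{2\pi T}}+O(\varepsilon^3)$ after using $\phi(u)-\frac{1}{\sqrt{2\pi}}=O(u^2)$ at the lower endpoint. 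Assembling the four contributions, the $1/\varepsilon$ terms combine to $\sqrt{2T/\pi}=v(T)$, the $O(1)$ terms collapse to $-\varepsilon/2$, and the $O(\varepsilon^2)$ coefficient equals $\tfrac{\sqrt{2}}{6\sqrt{\pi T}}=\tfrac{1}{\sqrt{18\pi T}}$, with remainder $O(\varepsilon^4)$, which is precisely the stated expansion.

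For the monotone convergence $v^{\varepsilon}(T)\uparrow v(T)$, I would use the equivalent representation $v^{\varepsilon}(T)=\tfrac{1}{\varepsilon}\int_0^{\varepsilon}\E^{\Q}[L_T^x]\,dx$ (which follows from the occupation time formula combined with the symmetry $\E^{\Q}[L_T^x]=\E^{\Q}[L_T^{-x}]$ for the time-reversed Brownian motion started at $0$) together with the elementary observation that
$$\partial_x\E^{\Q}[L_T^x]=\partial_x\int_0^T\phi_s(x)\,ds=-x\int_0^T\frac{e^{-x^{2}/(2s)}}{s\sqrt{2\pi s}}\,ds<0,\quad x>0.$$
Hence $x\mapsto\E^{\Q}[L_T^x]$ is strictly decreasing in $|x|$, and the running average of a decreasing function over shrinking neighborhoods of $0$ is itself decreasing in $\varepsilon$, converging upward to the value at $0$, namely $\E^{\Q}[L_T^0]=v(T)$. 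The main obstacle of the proof is purely analytic: tracking the contribution of the non-integrable singularity of $\phi(u)/u^2$ at the endpoint $u=\varepsilon/\sqrt{T}$ so that all three leading orders emerge with their precise constants, while the positive part of the argument (convergence and monotonicity) follows painlessly from the closed-form representation above.
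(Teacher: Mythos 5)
Your argument is correct and reaches the stated expansion with the right constants, but after the common first step it takes a different computational route from the paper. Both proofs start from the same time reversal $\overleftarrow{X}_s = X_{T-s}-X_T$, which turns the averaged spot local time into the occupation of a Brownian motion near the origin, i.e.\ $v^{\varepsilon}(T)=\frac{1}{2\varepsilon}\int_{-\varepsilon}^{\varepsilon}\E^{\Q}[L_T^x]\,dx$; your monotonicity argument ($x\mapsto \E^{\Q}[L_T^x]$ decreasing in $|x|$, so the shrinking averages increase to $\E^{\Q}[L_T^0]=v(T)$) is exactly the paper's. For the expansion, the paper then invokes Tanaka's formula to write $\E^{\Q}[L_T^x]=2c(T,x)$ with $c(T,x)=\E^{\Q}[(X_T-x)^+]$ a Bachelier call price, uses $\partial_x c=\Phi_T(x)-1$, $\partial_{xx}c=\phi_T(x)$, and gets the three terms from a two-line Taylor expansion in the strike; you instead work with the closed form $\frac{1}{2\varepsilon}\int_0^T[2\Phi(\varepsilon/\sqrt{s})-1]\,ds$ and extract the asymptotics by rescaling and two integrations by parts. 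Your route is self-contained (no Tanaka, no option-price interpretation) but trades the paper's clean second-order Taylor step for a delicate endpoint analysis, and this is where your write-up is loose in two places, even though the final numbers are right. First, the divergent contribution of the split is $2\int_{\varepsilon/\sqrt{T}}^{\infty}\frac{1}{\sqrt{2\pi}}u^{-2}du=\sqrt{2T/\pi}/\varepsilon$, not $\sqrt{T/(2\pi)}/\varepsilon$; your assembly only balances because the prefactor $2$ is silently reinstated at the end. Second, the second piece does not reduce to $-\int_{\varepsilon/\sqrt{T}}^{\infty}\phi(u)\,du$: the integration-by-parts boundary term at the lower endpoint equals $\bigl(\phi(a)-\tfrac{1}{\sqrt{2\pi}}\bigr)/a=-\tfrac{a}{2\sqrt{2\pi}}+O(a^3)$ with $a=\varepsilon/\sqrt{T}$, which is of order $\varepsilon$ and contributes at exactly the order of the target $\varepsilon^2$ coefficient, so it cannot be dismissed via "$\phi(u)-\tfrac{1}{\sqrt{2\pi}}=O(u^2)$". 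Including it gives $\int_{a}^{\infty}[\phi(u)-\tfrac{1}{\sqrt{2\pi}}]u^{-2}du=-\tfrac12+\tfrac{\varepsilon}{2\sqrt{2\pi T}}+O(\varepsilon^3)$, which is precisely the value you quote (your displayed middle identity $-\int_a^{\infty}\phi(u)\,du=-\tfrac12+\tfrac{\varepsilon}{2\sqrt{2\pi T}}$ is off by a factor $2$ in the $\varepsilon$ term), so the bookkeeping should be made explicit rather than compensated implicitly. With those two points cleaned up, your computation assembles to $v(T)-\tfrac{\varepsilon}{2}+\tfrac{\varepsilon^2}{\sqrt{18\pi T}}(1+r_\varepsilon)$ with remainder $O(\varepsilon^4)$, matching \cref{prop:vEpsEU}; if you want the shorter path, the paper's Tanaka/call-price identity avoids the singular endpoint analysis altogether.
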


\begin{proof}
    See \cref{app:vEpsEU}. 
\end{proof}

We gather from \cref{prop:vEpsEU} that the function $\varepsilon \mapsto v^{\varepsilon}(T)$ is essentially linear for small values of $\varepsilon$. This will be confirmed in \cref{fig:valueEps}. 
In the next results, we prove  that the convergence rate of  $v^{\varepsilon}(\tau)$ for general stopping times and  
$v_{*}^{\varepsilon}$
when $\calT = [0,T]$ is at least  $1/2$. In fact, this lower bound  seems  tight according to   \cref{fig:valueEps}.   

\begin{proposition}\label{prop:conv}
There exists a constant $C\in (0,\infty)$ such that for all $0< \varepsilon \le 1$, 
\begin{equation}
    \left\lVert\varphi^{\varepsilon}(\calO_{\tau},X_{\tau}) - L_{\tau}^{X_{\tau}} \right\rVert_{L^2(\Q)} \le \sqrt{C\varepsilon}\qquad \forall \tau\in \vartheta(\calT).
\end{equation}
In particular, $ |v^{\varepsilon}(\tau) -  v(\tau)| \le \sqrt{C\varepsilon}$ for all  $\tau\in \vartheta(\calT)$. 
\end{proposition}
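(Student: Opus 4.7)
The plan is to represent the error as a spatial average of Brownian local-time increments and reduce the $L^2$ bound to a uniform second-moment estimate on those increments. The occupation time formula (\cref{prop:OccProperties}.III) applied to $\calO_\tau$ and the ball $B_\varepsilon(X_\tau)$ yields $\calO_\tau(B_\varepsilon(X_\tau)) = \int_{X_\tau-\varepsilon}^{X_\tau+\varepsilon} L_\tau^y\,dy$, hence
\[
\varphi^\varepsilon(\calO_\tau,X_\tau) - L_\tau^{X_\tau} \;=\; \frac{1}{2\varepsilon}\int_{-\varepsilon}^{\varepsilon}\bigl(L_\tau^{X_\tau+h} - L_\tau^{X_\tau}\bigr)\,dh.
\]
Squaring, applying $L^2$-Jensen with the uniform probability on $(-\varepsilon,\varepsilon)$, and Fubini give
\[
\E^{\Q}\bigl[(\varphi^\varepsilon - L_\tau^{X_\tau})^2\bigr] \;\le\; \frac{1}{2\varepsilon}\int_{-\varepsilon}^{\varepsilon}\E^{\Q}\bigl[(L_\tau^{X_\tau+h} - L_\tau^{X_\tau})^2\bigr]\,dh,
\]
reducing everything to the key pointwise inequality $\E^{\Q}[(L_\tau^{X_\tau+h} - L_\tau^{X_\tau})^2] \le C|h|$, uniform in $\tau\in\vartheta(\calT)$ and $|h|\le 1$. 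The second assertion is then immediate from Cauchy-Schwarz: $|v^\varepsilon(\tau) - v(\tau)| \le \|\varphi^\varepsilon(\calO_\tau,X_\tau) - L_\tau^{X_\tau}\|_{L^2(\Q)} \le \sqrt{C\varepsilon}$.

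For \emph{deterministic} $z$, Tanaka-Meyer yields
\[
L_\tau^{z+h} - L_\tau^z \;=\; \bigl[|X_\tau - z - h| - |X_\tau - z|\bigr] - \bigl[|X_0 - z - h| - |X_0 - z|\bigr] - N_\tau^{z,h},
\]
with the continuous martingale $N_t^{z,h} := \int_0^t[\mathrm{sgn}(X_s - z - h) - \mathrm{sgn}(X_s - z)]\,dX_s$. The bracketed terms are pathwise bounded by $|h|$, and the It\^o isometry with optional stopping combined with the occupation time formula yields
\[
\E^{\Q}\bigl[(N_\tau^{z,h})^2\bigr] \;=\; 4\int_z^{z+h}\E^{\Q}[L_\tau^y]\,dy \;\le\; 4|h|\sqrt{2T/\pi},
\]
using $\E^{\Q}[L_T^y] = \E^{\Q}[|X_T - y|] - |X_0 - y| \le \E^{\Q}[|X_T|]$ (cf.\ \cref{prop:EUPrice}). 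Hence $\sup_z \E^{\Q}[(L_\tau^{z+h} - L_\tau^z)^2] \le C|h|$ for some $C = C(T)$.

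The main obstacle is transferring this deterministic-$z$ estimate to the random spot $z = X_\tau$. The local-time field $(s,z)\mapsto L_s^z$ admits a jointly continuous version, so the Tanaka identity can be evaluated at $z = X_\tau$ pathwise; but the integrand of $N_\tau^{z,h}$ is non-adapted at $z = X_\tau$, so the It\^o isometry is unavailable. I would close the gap by bootstrapping the pointwise $L^{2p}$ moment estimates $\E^{\Q}[(L_\tau^{z+h} - L_\tau^z)^{2p}] \le C_p|h|^p$ (via Burkholder-Davis-Gundy applied to $N^{z,h}$) to a uniform field estimate
\[
\E^{\Q}\Bigl[\sup_{|z|\le K,\,s\le T}(L_s^{z+h} - L_s^z)^2\Bigr] \;\le\; \tilde C_{T,K}\,|h|
\]
through a Kolmogorov / Garsia-Rodemich-Rumsey argument on the two-parameter random field, then truncating at $K = \sup_{s\le T}|X_s|$, which has every moment. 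This field-regularity step for Brownian local time is the only delicate ingredient; combining it with the preceding display produces the key pointwise inequality and thereby the proposition.
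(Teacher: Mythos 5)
Your reduction is sound and coincides with the first step of the paper's argument: Jensen over the spatial average, plus the fixed-$z$ Tanaka/It\^o-isometry estimate $\E^{\Q}[(L_\tau^{z+h}-L_\tau^z)^2]\le C|h|$. The genuine gap is exactly the step you flag and then leave as a plan: passing from deterministic $z$ to the random spot $z=X_\tau$ (and the random time $\tau$). The paper closes this gap not by a Kolmogorov/GRR bootstrap but by quoting \cite[Theorem 6.5]{BarlowYor}, applied to the two continuous martingales $M=X-z$ and $N=X$: that theorem bounds $\lVert\sup_{(t,x)\in[0,T]\times\R}|L_t^{x+z}-L_t^x|\rVert_{L^2(\Q)}$ directly in terms of $\vertiii{M-N}=|z|$ (up to a logarithmic factor absorbed into the constant). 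Because the supremum runs over all levels $x$ and all times $t\le T$ simultaneously, one may simply dominate $\lVert L_\tau^{X_\tau+z}-L_\tau^{X_\tau}\rVert_{L^2(\Q)}$ by the norm of that supremum and average over $z\in B_\varepsilon$; no adaptedness or field-regularity argument is needed.

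Your proposed substitute for this step is not proven, and its quantitative core is doubtful as stated. A Kolmogorov/Garsia--Rodemich--Rumsey argument starting from $\E^{\Q}[(L_\tau^{z+h}-L_\tau^z)^{2p}]\le C_p|h|^p$ cannot deliver the uniform bound $\E^{\Q}[\sup_{|z|\le K,\,s\le T}(L_s^{z+h}-L_s^z)^2]\le \tilde C_{T,K}|h|$ with the exact rate $|h|$: chaining at this critical exponent loses at least a logarithmic factor, consistent with Ray's sharp spatial modulus $\sqrt{|h|\log(1/|h|)}$ for Brownian local time, so you would obtain $|h|\log(1/|h|)$ or $|h|^{1-\delta}$ at best. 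Moreover, truncating at the random radius $K=\sup_{s\le T}|X_s|$ requires tracking how $\tilde C_{T,K}$ grows in $K$ and decoupling it from the field increment (Cauchy--Schwarz plus moments of $K$), which degrades the bound further unless handled carefully. None of this threatens the qualitative conclusion $v^\varepsilon\to v$, but as written your argument does not establish the stated $\sqrt{C\varepsilon}$ rate; the cleanest repair is to replace the bootstrap by the Barlow--Yor supremum estimate, as the paper does (noting that even there a $\log(1/|z|)$ appears in the constant, which the paper absorbs somewhat loosely).
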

\begin{proof}
    See \cref{app:conv}. 
\end{proof}

\begin{corollary}\label{cor:conv}
   Let $v_*^{\varepsilon}$, $v_{*}$ be respectively the value of  the OS problem \eqref{eq:OSLTEPS} and \eqref{eq:OSLT} with $\calT = [0,T]$. Then 
   $ |v^{\varepsilon}_{*} -  v_*| \le \sqrt{ C \varepsilon}$, with   $C$ as in \cref{prop:conv}. 
   In particular, 
    $\lim_{\varepsilon \to 0} v_*^{\varepsilon} = v_*$. 
\end{corollary}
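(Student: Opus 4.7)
The plan is to use the uniform-in-$\tau$ estimate from \cref{prop:conv} together with the elementary fact that $|\sup_{\tau} f(\tau) - \sup_{\tau} g(\tau)| \le \sup_{\tau} |f(\tau) - g(\tau)|$ for bounded functionals on any common domain. This reduces the statement to pure bookkeeping once \cref{prop:conv} is invoked.

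Concretely, I would first recall the definitions $v_*^{\varepsilon} = \sup_{\tau \in \vartheta([0,T])} v^{\varepsilon}(\tau)$ and $v_* = \sup_{\tau \in \vartheta([0,T])} v(\tau)$. Since $\sup_{t\le T} L_t^{X_t}$ and $\sup_{t\le T} \varphi^{\varepsilon}(\calO_t,X_t)$ both belong to $L^1(\Q)$ (as noted in \cref{sec:spotLTpremium}, using the bound $\lVert \calO_T\rVert_{\infty} = \sup_x L_T^x \in L^p(\Q)$), both suprema are finite. By \cref{prop:conv}, for every $\tau \in \vartheta([0,T])$,
\[
v(\tau) - \sqrt{C\varepsilon} \;\le\; v^{\varepsilon}(\tau) \;\le\; v(\tau) + \sqrt{C\varepsilon}.
\]

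Taking the supremum over $\tau \in \vartheta([0,T])$ on the right inequality gives $v_*^{\varepsilon} \le v_* + \sqrt{C\varepsilon}$, and taking it on the left inequality gives $v_* - \sqrt{C\varepsilon} \le v_*^{\varepsilon}$. Combining the two bounds yields $|v_*^{\varepsilon} - v_*| \le \sqrt{C\varepsilon}$, and letting $\varepsilon \downarrow 0$ immediately delivers $v_*^{\varepsilon} \to v_*$.

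There is essentially no obstacle here: the substantive work has already been done in \cref{prop:conv}, where the $L^2(\Q)$ convergence of $\varphi^{\varepsilon}(\calO_\tau,X_\tau)$ to $L_{\tau}^{X_\tau}$ is established uniformly over $\tau$. The corollary is just the observation that a uniform error bound on the rewards  translates directly into the same bound on the optimal values, irrespective of whether the optimizers $\tau^{\varepsilon}_*$, $\tau_*$ achieving (or $\eta$-achieving) the suprema coincide. The only very minor point to mention is that $\sqrt{C\varepsilon}$ does not depend on $\tau$, which is what licenses pulling it outside the supremum.
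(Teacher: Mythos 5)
Your proof is correct and follows essentially the same route as the paper: both rest entirely on the uniform-in-$\tau$ bound $|v^{\varepsilon}(\tau)-v(\tau)|\le\sqrt{C\varepsilon}$ from \cref{prop:conv}; the paper merely phrases the passage to the suprema via a $\gamma$-optimal stopping time $\tau_{\gamma}$ and lets $\gamma\to 0$, which is the same bookkeeping you carry out by taking the supremum over $\tau$ on both sides of the pointwise inequality.
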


\begin{proof} This is immediate: 
    let $\gamma>0$ be arbitrary and $\tau_{\gamma}$ such that $v(\tau_{\gamma}) \ge v_{*}-\gamma$. Then 
    \begin{align*}
    v_* -\gamma \le  v(\tau_{\gamma})  \le  v^{\varepsilon}(\tau_{\gamma}) + \sqrt{C\varepsilon} \le v_*^{\varepsilon} + \sqrt{C\varepsilon},
\end{align*}
 so $v_* - v_*^{\varepsilon} \le  \sqrt{C\varepsilon}$ as $\gamma \to 0$. The inequality  $v_*^{\varepsilon} - v_*  \le  \sqrt{C\varepsilon}$ is shown mutatis mutandis. 
\end{proof}

\cref{fig:valueEps} displays 
the optimal value of \eqref{eq:OSLT} as function of $\varepsilon$ in the case $\calT = \{1\}$ 
 and $\calT =[0,1]$. 
 The highest value from \cref{tab:LSMC} using the least square Monte Carlo approach is also reported. We use the inspection strategy laid out in \cref{sec:American}  to estimate  the value in the case  $\calT =[0,1]$. Note that the early exercise premium (spread between the magenta and blue lines) is significant and increases as the corridor width shrinks to zero. 
 This is because 
peaks in the local time field $x\mapsto L_{t}^{x}$ 
can be captured more accurately as $\varepsilon$ gets smaller. 

\section{Unified Markovian  Lift of Financial Instruments}\label{sec:finApp}
This section  demonstrate the 
broad range of financial applications  offered by occupied processes. 
First, we show how occupation flows provide a unified way to express payoffs of financial derivatives  while preserving the Markov property.  
Precisely, if $X$ is the asset price process and $\xi:\Omega \to \R$ a European style, path-dependent claim, a common approach in derivatives pricing  consists of choosing a path-dependent feature $F_t(\omega) = \textnormal{f}(t,\omega) \in \frakF$ where $\textnormal{f}: [0,T]\times \Omega \to \R$ is a non-anticipative functional (see \cref{sec:FITO}), such that 
\begin{equation}
    \xi(\omega)  = \varphi(F_T,X_T)(\omega),  \quad \varphi: \frakF \times \R \to \R, \quad \ \omega \in \Omega, 
\end{equation}
and $ (F,X)$ is a Markov process. 
The feature space $\frakF$ is chosen to be "nicer" than $\Omega$, e.g., a finite-dimensional Euclidean space $\frakF = \R^d$  or $\frakF = \calM$ in the case of occupation flows. 
Due to the Markov property, the derivatives price will be function of these variables as well, leading to  tractable numerical  methods for  the  pricing PDE in the projected space.  
Setting $F = \tilde{\calO}$ or $F = \calO$  leads to occupied pricing PDEs, allowing  the model to depend on occupation times as well, mainly  through volatility. 
In the case $F = \calO$,  
an extension of forward variance models is explored in \cref{sec:FwdOccupation}, while the use of calendar time  occupation flows shall be presented in a companion paper \cite{TissotLOV}. 

It turns out that relevant features in finance are very often expressible in terms of the  occupation flow, 
as shown next.

\subsection{Exotic Options and Structured Products}\label{sec:occPayoffs}

We here gather financial instruments  
whose payoff  is function of 
the calendar time occupied process, that is 
$\xi = \varphi(\tilde{\calO}_T,X_T)$.  
If $\Q$ is a risk-neutral measure and assuming zero interest rates,  the time $t$   price of a European-style exotic option with payoff $\varphi$   is  $v(\tilde{\calO_t},X_t) = \E^{\Q}[\varphi(\tilde{\calO}_T,X_T)  |  \calF_t ]$. If the asset follows the dynamics of an occupied SDE as in \eqref{eq:OSDEStd} (see also the companion paper \cite{TissotLOV}), we   can then invoke Feynman-Kac's formula in \cref{thm:FKDirichletStd} to deduce the corresponding pricing PDE. The latter is then integrated using a numerical solver, the \emph{same} for all occupation-dependent derivatives, from which the initial price and  sensitivities (Greeks) can be extracted. 

This framework is summarized in \cref{fig:diagramExotics}. We stress that  the occupation measure is discretized  in practice, e.g., using occupation times as in \cref{sec:American}, leading to a finite-dimensional Markovian lift. For the numerical solver, a promising  direction is to adapt   deep BSDE methods \cite{HanJentzenE1,hure2020deep} to our context, as explained in the forthcoming work \cite{HuangTissotZhang}.

\begin{figure}[H]
    \centering
    \caption{Unified Markovian lift for exotic options and structured products.}
    \label{fig:diagramExotics}

\begin{tikzpicture}

  
        \node (A) at (-0.25,4.5) {\underline{Instrument/Feature}}; 
    \node (E) at (4,4.5)  {\underline{Occupied}};  
     \node (O) at (8,4.5)  {\underline{Model/Method}}; 
    \node (A1) at (-0.25,3) [draw,text width=4.3cm, align=center, fill=red!5] {\textbf{Asian} \\ $\int_0^{T}X_tdt$}; 
    \node (A2) at (-0.25,1.25) [draw,text width=4.3cm, align=center, fill=red!5] {\textbf{Barrier/Lookback} \\ 
    $[\min\limits_{t\le T}X_t, \max\limits_{t\le T}X_t] $}; 
    \node (A3) at (-0.25,-0.5) [draw,text width=4.3cm, align=center, fill=red!5] {{\small \textbf{Range Accrual/Parisian}} \\ 
    $\int_0^T  \mathds{1}_{A}(X_t)dt$}; 

    \node (E1) at (4,3) [draw,text width=1.9cm, align=center, fill=cyan!50!blue!10] {$\int_{\R} x  \tilde{\calO}_T(dx)$}; 
    \node (E2) at (4,1.25) [draw,text width=1.9cm, align=center, fill=cyan!50!blue!10] {$\text{supp} (\tilde{\calO}_T)$}; 
    \node (E3) at (4,-0.5) [draw,text width=1.9cm, align=center, fill=cyan!50!blue!10] {$\tilde{\calO}_T(A)$}; 

    \node (F) at (8,1.25) [draw,text width=2cm, align=center, fill=green!40!blue!15] {\textbf{BSDE} \\ \textbf{Solver}}; 

     \node (G) at (11.5,1.25) [draw,text width=1.5cm, align=center, fill=green!70!blue!10] {\textbf{Price} \\ \& \textbf{Greeks}}; 

    \node (H) at (8, 3) [draw,text width=3.75cm, align=center, fill=cyan!30!blue!10] {\textbf{Occupied Volatility} \\ $\frac{dX_t}{X_t} = \sigma(\tilde{\calO}_t,X_t) dW_t$};

       \draw[myarr] (A1) -- (E1) node[midway, above] {};
       \draw[myarr] (A2) -- (E2) node[midway, above] {};
       \draw[myarr] (A3) -- (E3) node[midway, left] {};
       \draw[myarr] (E1) -- (F) node[midway, above] {};
       \draw[myarr] (E2) -- (F) node[midway, above] {};
       \draw[myarr] (E3) -- (F) node[midway, left] {};
       \draw[myarr] (F) -- (G) node[midway, above] {};
       \draw[myarr] (H) -- (F) node[midway, above] {};

\end{tikzpicture}
\end{figure}

\begin{example} \label{ex:Asian}  
   \textit{Asian options} are contingent upon the time average of the asset. One typically choose the running integral  $F_t =  \int_0^{t} X_s ds$ as feature over the running average due to the simpler dynamics of the former. 
   Clearly, the chronology of the path is here irrelevant, so we conclude from \cref{thm:chronology}  that $F_t$ is function of $\tilde{\calO}_t$.   
   This is  confirmed by 
    the occupation time formula, namely  $\int_0^{t} X_s ds = \int_{\R} x d\tilde{\calO}_{t}$. Hence, Asian payoffs can be rewritten in terms of $\tilde{\calO}_T$ and the terminal asset price when need be. 
   For instance,  the payoff of an floating  strike Asian call option reads
    $$\xi = \left( X_{T} - \frac{1}{{T}} \int_0^{T} X_t dt\right)^+ = \left( X_{T} - \frac{1}{{T}}\int_{\R} x d\tilde{\calO}_{T} \right)^+= \varphi(\tilde{\calO}_{T},X_{T}). $$
  The scaling factor $\frac{1}{{T}}$ may also be written as $\tilde{\calO}_{T}(\R)^{-1}$ in the occupation payoff, though the maturity is known from the contract's term sheet. 
\end{example}%

\begin{example}\label{ex:Lkbk}
   The payoff of  \textit{barrier and lookback options} involves the range  of the underlying asset. The relevant feature is thus given by $$F_t = [\min\limits_{s\le t}X_s, \max\limits_{s\le t}X_s],$$ 
   which coincides with the support of $\tilde{\calO}_t$ as seen in \cref{ex:runMax}.  
   Barrier options use the range to monitor whether the asset has breached a prescribed level$-$the barrier, which acts as a knock-in (creation) or or knock-out (termination) trigger. On the other hand, lookback derivatives mimic vanilla call/put payoffs 
   by replacing the terminal value by   the maximum/minimum price, or use the latter as a floating strike. 
E.g., the payoff of a floating lookback call options admits the form  
    $$\left(X_T - \min_{t\le {T}} X_t \right)^+ = (X_T - \inf \textnormal{supp}(\tilde{\calO}_{T}))^+= \varphi(\tilde{\calO}_{T},X_T).$$
\end{example}

We end this section with a 
popular class of contracts in the quantitative finance literature known as  \textit{occupation time derivatives}\footnote{not to be confused with the occupation  derivative in \cref{def:occDerivative}!} \cite{ChesneyJeanblancYor,Hugonnier}  \cite[Chapter 7]{Detemple}. 
The generic form for the payoff of such options is 
\begin{equation}\label{eq:occTimePayoff}
    \varphi(\tilde{\calO}_T,X_T) = \phi(\tilde{\calO}_T(A),X_T), \quad \phi:\R^2\to \R, \quad A\in \calB(\R).
\end{equation} 
\begin{remark}
    While it is technically sufficient to use  the two-dimensional process $(\tilde{\calO}(A),X)$ for the payoff in \eqref{eq:occTimePayoff}, this minimal approach becomes impractical  
when a book of occupation time derivatives contingent on several Borel sets is given, calling for the more general occupation lift $X_T \to (\tilde{\calO}_T,X_T)$. 
\end{remark}

\begin{example} 
\label{ex:rangeAccrual} \emph{Range accruals} are popular hybrid products in fixed income and foreign exchange markets. The holder receives a floating interest payment based on the proportion of time the reference rate (e.g. SOFR) has stayed within   a given range, say $[x_1,x_2]$. 
In other words, the paid amount is of the form $\frac{C}{T}\int_0^T \mathds{1}_{[x_1,x_2]}(X_t)dt$, so the relevant feature 
is precisely the occupation time $F_T = \tilde{\calO}_T([x_1,x_2])$. 

Note that range accruals are linear  in $\tilde{\calO}_T$, so their price is  theoretically known from vanilla options. Indeed,  recall from  \citet{BreedenLitzenberger} that 
$\Q(X_T\le x) = \partial_K P(x,T)$, with the undiscounted vanilla put price  $P(K,T) = \E^{\Q}[(K-X_T)^+]$. Consequently, the expected calendar occupation time of $X$ in $[x_1,x_2]$ becomes  
\begin{equation}\label{eq:rangeIntegral}
    \E^{\Q}[\tilde{\calO}_T([x_1,x_2])] = \int_0^T \Q(X_t \in [x_1,x_2])dt =    \int_0^T \big( \partial_K P(x_2,t) - \partial_K P(x_1,t)\big)dt, 
\end{equation} 
assuming that $\Q\circ X_T^{-1}$ has no atoms. 
However, only finitely many strikes and maturities are available in practice. Even when the range accrual is monitored daily, so the right side of \eqref{eq:rangeIntegral} becomes a sum, options on the reference rate rarely expires on every business day. Hence, the valuation of range accruals  would still benefit from the unified  framework shown in  \cref{fig:diagramExotics}. 
\end{example}


\begin{example} 
\textit{Parisian options} \cite{ChesneyJeanblancYor,Hugonnier} were introduced to relax the knock-in/out component of  barrier options seen in \cref{ex:Lkbk}. 
 Precisely, the option is activated or terminated only once the asset has performed an excursion above/below the barrier of length at least $\delta>0$, referred to as the \textit{window}. 
 A variant, called cumulative Parisian or \textit{Parasian} option \cite{haberWilmott},  requires that the cumulative time above/below the barrier exceeds the window. As the payoff of excursion-based Parisian options depends on the chronology of the path, the occupied lift can only be applied to the cumulative counterparts  in view of \cref{thm:chronology}. 
For example, consider   an asset-or-nothing, up-and-out cumulative Parisian call option with strike $K$, barrier  $B>K$, and window $\delta >0$, namely 
\begin{equation*}
   \xi = \begin{cases}
       X_T, & \text{ if } \int_0^T\mathds{1}_{\{X_t \ge B\}} dt <\delta,  \\
       0, & \text{otherwise.}
   \end{cases}
\end{equation*}
Letting $A = [B,\infty)$, we can rewrite the above  payoff  as 
$ \xi  =  X_T \mathds{1}_{\{\tilde{\calO}_T(A) < \delta  \}},$ which is indeed of the form \eqref{eq:occTimePayoff}.  
The relaxed knock-in/out feature is particularly appealing to the buyer of knock-out Parisian options as  it (partially) prevents  price manipulation from the seller  to push the underlying towards the barrier,  making the contracts  worthless. From the seller's perspective,  Parisian options are easier to hedge as the soft barrier effectively smoothen the Greeks. 
Other instruments exhibit a similar relaxation feature. 
One example is the soft call provision of convertible bonds, where  the issuer  discontinues (calls) the contract once the underlying has spent a prescribed amount of time above a  threshold. 
\end{example}

\subsubsection{Deep BSDE Solver}\label{sec:PDE}
%

Back to the unified approach  in \cref{fig:diagramExotics}, the occupation payoffs and calibrated model are then given to a numerical solver. We here present an adaptation of deep BSDE methods \cite{HanJentzenE1,HanJentzenE2,hure2020deep}, which will be further discussed and tested in an accompanying paper \cite{HuangTissotZhang}. 
Let $W$ be Brownian motion and $X$ denotes the asset price process  adapted to  $W$. Given a risk-neutral measure $\Q$ and contingent claim   $\xi  \in L^1(\Q)$, then the value process $Y_t = \E^{\Q}_t[\xi]$ solves the backward SDE (BSDE),  
\begin{equation}\label{eq:BSDE1}
    Y_t= \xi -  \int_t^T Z_sdW_s,
\end{equation}
for some adapted process $Z$. Assuming  a Markov structure, the  deep BSDE method  consists of parameterizing the $Z$ process by a feedforward neural network,  taking the state variable as input.  Among other applications, the deep BSDE method has shown impressive performance in derivatives pricing, especially when the state process is high-dimensional \cite{HanJentzenE1,HanJentzenE2,hure2020deep}. The increasing dimensionality may come from  numerous underlying securities in multi-asset options or, perhaps of greater relevance in practice, from features to express exotic payoffs and path-dependent models as shown next. 

Suppose that $\xi = \varphi(\tilde{\calO}_T,X_T)$ for some occupation payoff $\varphi$ and that $X$ has dynamics $dX_t =  X_t\sigma(\tilde{\calO}_t,X_t)dW_t$; see \cite{TissotLOV}. 
Since $(\tilde{\calO},X)$ is Markovian, then  $Y_t  = v(\calO_t,X_t)$ for some price functional $v:\calD\to \R$.   
Assuming that  $v$ is smooth, we conclude from  It\^o's formula (\cref{thm:ItoOX}) that $Z_t = \partial_x v(\calO_t,X_t)  \sigma(\calO_t,X_t)$. One then parameterizes the Delta $\Delta = \partial_x v$, or $Z$, by a feedforward neural network $\Phi:\calD_T \times \Theta \to \R$ for some  parameter set $\Theta \subseteq \R^p$, $p\in \N$. In the present  context of pricing and hedging, it is more natural  to approximate the Delta process, namely
\begin{equation}\label{eq:ZTheta}
    \partial_x v(\tilde{\calO}_t,X_t)   \approx \Phi(\tilde{\calO}_t,X_t; \theta) = :\Delta^{\theta}(\tilde{\calO}_t,X_t).
\end{equation}
Moreover, the unknown initial value $Y_0$ is replaced by an extra parameter $\pi\in \R$, the  price of the claim, leading to the parameterized value process $ Y_t^{\pi, \theta} = \pi + \int_0^t \Delta^{\theta}(\tilde{\calO}_s,X_s)dX_s$. 
In light of \eqref{eq:BSDE1}, the parameters $\pi,\theta$ are trained so as to satisfy $Y_T^{\pi, \theta} \approx \xi$, e.g., by solving the following quadratic hedging problem using stochastic gradient descent,
\begin{equation}\label{eq:loss}
    \inf_{\pi, \theta} \ \calL(\pi,\theta), \quad \calL(\pi, \theta) := \E^{\Q}[|\xi - Y_T^{\pi, \theta}|^2]. 
\end{equation}
Crucially, the control $\Delta^{\theta}(\tilde{\calO}_t,X_t)$ takes the state process as input, so  can be parametrized by a single neural network across time. This is in stark contrast with  path-dependent PDEs where the deep BSDE method is not available due to the lack of Markov structure. Instead, one may use  recurrent neural networks, e.g. \cite{SaporitoPPDE} extending the deep Galerkin method to PPDEs.  However, the training and tuning of recurrent neural networks is much more involved and suffers from an explosion in input dimension when the number of time steps becomes large. Hence,  it is less clear that such approach would be  adopted by practitioners.

\subsection{Variance Derivatives}\label{sec:varDerivatives}
Similarly, the occupation flow $\calO$ provides a unified lift for variance instruments as shown in the following examples and diagram below. The forward occupation model mentioned in \cref{fig:diagramVariance} will be introduced in \cref{sec:FwdOccupation}.   
 
\begin{figure}[H]
    \centering
    \caption{Unified Markovian lift for variance derivatives.}
    \label{fig:diagramVariance}
\begin{tikzpicture}
        \node (A) at (-0.25,4.5) {\underline{Instrument/Feature}}; 
    \node (E) at (4,4.5)  {\underline{Occupied}};  
     \node (O) at (8,4.5)  {\underline{Model/Method}}; 
    \node (A1) at (-0.25,2) [draw,text width=4.4cm, align=center, fill=red!5] {{\small \textbf{Variance/Timer}} \\ $\int_0^{T}V_tdt$}; 
    \node (A2) at (-0.25,0) [draw,text width=4.4cm, align=center, fill=red!5] {{\small \textbf{Weighted Variance}} \\ 
    $\int_0^T  w(X_t)V_tdt$}; 

    \node (E1) at (4,2) [draw,text width=2.5cm, align=center, fill=cyan!50!blue!10] {$\calO_T(\R)$}; 
    \node (E2) at (4,0) [draw,text width=2.5cm, align=center, fill=cyan!50!blue!10] {$\int_{\R} w(x)\calO_T(dx)$};

    \node (F) at (8,1.25) [draw,text width=1.25cm, align=center, fill=cyan!70!blue!15] {\textbf{Solver}}; 
     \node (G) at (11.5,1.25) [draw,text width=1.5cm, align=center, fill=green!70!blue!10] {\textbf{Price} \\ \& \textbf{Greeks}}; 

    \node (H) at (8, 3) [draw,text width=3.95cm, align=center, fill=cyan!50!blue!10] {\textbf{Forward Occupation} \\ $d\calO_t^T = H_t^T \cdot dW_t$};

       \draw[myarr] (A1) -- (E1) node[midway, above] {};
       \draw[myarr] (A2) -- (E2) node[midway, above] {};
       \draw[myarr] (E1) -- (F) node[midway, above] {};
       \draw[myarr] (E2) -- (F) node[midway, above] {};
       \draw[myarr] (F) -- (G) node[midway, above] {};
       \draw[myarr] (H) -- (F) node[midway, above] {};

\end{tikzpicture}
\end{figure}


\begin{example} \label{ex:CVS}
    \textit{Corridor Variance Swaps} (CVS) 
are over-the-counter instruments providing exposure to the volatility of an asset when the latter lies within a prescribed range. They belong to the larger family of weighted variance swaps; see \citet[Chapter 5]{Bergomi} and \citet{Lee}. 
For concreteness, if the log price of an asset has dynamics
$dX_t = - \frac{1}{2}V_tdt + \sqrt{V_t} dW_t$, then the terminal profit and loss (P\&L) for the buyer  is given by  
\begin{equation}\label{eq:PnL}
    \text{P\&L} =  \frac{1}{T}\int_0^T \mathds{1}_{[x_{1},   x_{2}]}(X_t)  V_t dt \; \ - \; \   K^2_{x_1,x_2} \ , \qquad x_{1} < x_{2}.
\end{equation} 
The levels $x_1,x_2$ defines the corridor during which the instanteneous variance $V_t$ is accumulated. Since $V_t dt = d\langle X \rangle_t$, the floating leg 
is proportional to the  occupation time $\calO_T([x_1,x_2])$.  
The \textit{fair strike} $K_{x_1,x_2}$ is set such that the swap  has zero initial cost. By the law of one price,  it is thus given by the (scaled) expected occupation time   $ K^2_{x_1,x_2} = \frac{1}{T}\E^{\Q}[\calO_T([x_1,x_2])]$. Hence, corridor variance can be viewed as the primitive securities associated to $\calO$ and will play a central role in the forward occupation models introduced in \cref{sec:FwdOccupation}. 


Corridor variance swaps offer several  advantages over a standard variance swap which  delivers the cumulative realized variance 
irrespective of the underlying price level. 
First, CVS's are less expensive
the variance accumulated outside the prescribed corridor is simply discarded. Second, the holder may have some views on the market and can speculate a range on the asset's future levels by entering a suitable CVS. See \cite{Lee,BNP} for further details. 
\end{example}

\begin{remark}
Towards a parallel with the stop local time problem
(\cref{sec:spotLT}), we may envision a \emph{floating American CVS}  where the holder 
chooses when to receive the realized variance within 
 a floating corridor around the spot price. 
That is,  $x_{1} = X_{\tau}-\varepsilon$, $x_{2} = X_{\tau}+\varepsilon$, in which case  we have\footnotemark[\value{footnote}] 
\footnotetext{Note that $\calO_T(\overline{B_{\varepsilon}(x)}) = \calO_T(B_{\varepsilon}(x))$ as $\calO_T \ll \lambda$ $\Q-a.s.$ }
\begin{equation}\label{eq:CVSStrikeAmFloat}
      K^2(\varepsilon) = \sup_{\tau\le T}  \ \E^{\Q}\left[\frac{1}{2\varepsilon}\int_0^\tau \mathds{1}_{B_{\varepsilon}(X_{\tau})}(X_t) V_t dt \right] = \sup_{\tau\le T}  \ \E^{\Q}\left[\varphi_{\varepsilon}(\calO_\tau,X_\tau) \right], 
\end{equation} 
with $\varphi_{\varepsilon}$ seen in \eqref{eq:varphiEps}.
As seen \cref{sec:shrinkCorridor}  that  $K^2(\varepsilon) = v_{\varepsilon}^*$ converges to the value of the spot local time problem \eqref{eq:OSLT}. 
While these contracts are not traded in financial markets,  their introduction could benefit investors seeking exposure to volatility around the asset price when it is exercised. 
\end{remark}
 

\begin{example}\label{ex:timer}
    
Introduced by Société Générale  \cite{SocGen}, \textit{timer options}   
differ from traditional derivatives by having a floating maturity  determined by the asset's realized volatility 
\cite{carole}  
\cite[Chapter 1]{GuyonHL}. Concretely, the  payoff is delivered once 
the total variance exceeds a prescribed budget. This characteristic presents benefits for both parties: 
the buyer can save on the  premium, typically positive, between  implied  and realized volatility, while the seller is less exposed, if exposed at all,  to model risk arising from volatility, called Vega risk. This is because these contracts they are less sensitive, if not insensitive to volatility as explained next. 

Let $S$ be an asset with   dynamics $dS_t = \sigma_t S_t dW_t$, with $\sigma_t \ge \underline{\sigma}$ for some constant $\underline{\sigma}>0$. Define also the log price  $X = \log S$,   quadratic variation  process 
 $Q_t = \langle X \rangle_t = \int_0^t \sigma_s^2ds$, 
 and stopping time $\tau_q = \inf\{t\ge 0 :  Q_t \ge q\}$. Fix a variance budget $\bar{q} >0$ and payoff $\varphi: \R_+\to \R$ such that  $w(q,s) := \E^{\Q}[\varphi(S_{\tau_{\bar{q}-q}}) \ | \ S_0 = s]$ belongs to $\calC^{1,2}(\R_+^2)$. Then, it is easily  seen from Itô's formula that $w$ 
 solves the PDE
  \begin{subnumcases}{}
\Big(\partial_q + \frac{s^2}{2}\partial_{ss}\Big)u(q,s) = 0,  & \; $q <  \bar{q}$, \label{eq:PDETimer}\\[0.2em] 
u(q,s) = \varphi(s) & \; $q = \bar{q}. $
\label{eq:PDETimerTerminal}
\end{subnumcases}
The volatility does not appear in \eqref{eq:PDETimer} since  it is absorbed by the quadratic variation process.   Consequently,  the value of the contract  
 is insensitive to volatility. 
However,  this conclusion does not hold anymore in the presence 
 of (nonzero) 
 interest rates or dividends; see \cite{carole}.   

Importantly, $q$  plays the role of time in  \eqref{eq:PDETimer}-\eqref{eq:PDETimerTerminal} as does the occupation flow, and one naturally expects a connection between  the above PDE and  the Dirichlet problem \eqref{eq:dirPDE}-\eqref{eq:dirTerminal}. Indeed, if $\calO$ is the occupation flow of   $X$, then $Q_t = \calO_t(\R)$, and set $v(\mo,x) := w(\mo(\R),e^x)$. Hence $v\in \calC^{1,2}(\calD)$ with $\partial_x v = s \partial_s w$, $\partial_{xx} v = s^2 \partial_{ss} w$,   and  $\partial_{\mo} v = \partial_q w$ from    \cref{ex:t}. 
If the volatility process satisfies $\sigma_t = \sigma(\calO_t,X_t)$ for some map $\sigma:\calD\to [\underline{\sigma},\infty)$, then $dX_t = -\frac{1}{2}\sigma(\calO_t,X_t)^2dt + \sigma(\calO_t,X_t)dW_t$,  and according to  \cref{thm:FKDirichlet},  $v$ solves   the occupied PDE 
  \begin{subnumcases}{}
\Big(\sigma^2\partial_{\mo}   -\frac{\sigma^2}{2}\partial_{x}  + \frac{\sigma^2}{2}\partial_{xx}\Big)u(\mo,x) = 0,  & \text{ on } \hspace{-0.8mm} $\calD_{\Theta},$ \label{eq:oPDETimer}\\[0.2em] 
u(\mo,x) = \varphi(e^x) & \text{ on } \hspace{-0.8mm} $\partial \! \calD_{\Theta},$
\label{eq:oPDETimerTerminal}
\end{subnumcases}
where $\Theta = \bar{q}$. 
Again, we can factor out $\sigma^2$  from \eqref{eq:oPDETimer},  
confirming  the irrelevance of the volatility  to price the option.  
\end{example}


\subsection{Forward Occupation Models}\label{sec:FwdOccupation}

We conclude this section with a generalization of  
\textit{forward variance models} (\cite{Bergomi,GatheralKeller}) that uses the information content of vanilla options not only through the forward variance curve, but through all forward occupation times.\footnote{I would like to thank an anonymous referee for suggesting this promising direction. } 
Consider a standard Brownian motion $W = (W^1,\ldots, W^d)$ in $\R^d$, $d\ge 1$, and denotes its natural filtration by $(\calF_t)$. Write also $\E^{\Q}_t[\cdot ]= \E^{\Q}[ \cdot  |\calF_t]$. Assume that the log price  $X = \log S$ of an asset has dynamics 
$$dX_t = - \frac{1}{2}V_tdt + \sqrt{V_t}dW_t^1,$$
where  $V$ may depend on every component of $(W^1,\ldots, W^d)$.  Forward variance models \cite{Bergomi,GatheralKeller} rely on the following embedding $V_t \hookrightarrow \xi_t^{\cdot}$, with the  forward variance curve $\xi_t^T =  \E^{\Q}_t[V_T]$ (noting that $V_t= \xi_t^t$). 
Then $\xi^T$ is a $\Q-$martingale on $[0,T]$, and one typically specifies a dynamics for the forward variance curve of the form 
\begin{equation}\label{eq:fwdVarDynamics}
    d\xi_t^T = \eta(t,T,\xi_t^{\tau})\cdot dW_t = \sum_{i=1}^d \eta_i(t,T,\xi_t^{\tau}) dW_t^i,
\end{equation}
where $\tau \ge 0$ depends on $t,T$. In Bergomi models \cite{Bergomi}, $\tau = T$ with $\eta(t,T,\xi_t^{T})$ linear in $\xi_t^{T}$, while in  affine forward variance models \cite{GatheralKeller},  $\tau= t$, namely $\eta(t,T,\xi_t^{\tau}) = \eta(t,T,V_t)$. See \cref{ex:BergomiModels,ex:AffineFwdModels}. 
Given a variance swap that delivers the realized variance on $[0,T]$, then 
the value of its  (unannualized) floating leg  at time $t\in [0,T]$ can then be written  as 
\begin{equation}\label{eq:VSt}
    \text{VS}_t^T := \E^{\Q}_t[\int_0^T V_sds] = \underbrace{\int_0^t V_sds}_{\text{realized}} + \underbrace{\int_t^T \xi_t^sds}_{\text{implied}}. 
\end{equation}
Note that $\text{VS}^T$ is also a $\Q-$martingale, starting at the fair strike $\text{VS}^T_0$  and delivering the (unannualized) realized variance at $t = T$. 
Shifting gears, on can embed the forward variance curve into the \textit{instantaneous forward occupation measure},  
\begin{equation}\label{eq:instFwdOcc}
\xi_t^{T} \hookrightarrow   \mo_t^T(A) = \E^{\Q}_t[\mathds{1}_{A}(X_T)V_T],  \quad A\in \calB(\R), 
\end{equation}
where $\xi_t^T = \mo_t^T(\R)$. 
Also, $\mo_t^T$ can be seen as a surface $(K,T) \mapsto  \E^{\Q}_t[\delta_{K}(X_T)V_T]$ in strikes and maturities, an object initially studied by \citet{dupire2004unified}.  
Integrating  $\mo_t^{\cdot}$ for fixed $t$   gives the   forward occupation measure
\begin{equation}
    \calO_t^T(A) = \int_0^T \mo_t^{s}(A) ds = \E^{\Q}_t\Big[\int_0^T \mathds{1}_{A}(X_s)V_sds\Big] 
    = \E^{\Q}_t[\calO_T(A)], 
\end{equation}
which is equivalently described by the local time surface $(K,T) \mapsto \E^{\Q}_t[L_T^K]$; see \cref{fig:fwdOccSurface}. 
When $A = [x_1,x_2]$,  $\calO_T^T(A) = \calO_T(A)$ is  the (unannualized) floating leg of the associated corridor variance swap (\cref{ex:CVS}), while $\calO_0^T(A) = \E^{\Q}[\calO_T(A)]$ is its fair strike. 
Classically, the latter can be statically replicated using vanilla options through the formula
$$\calO_0^T(A) = 2\int_A \text{OTM}(K,T) \frac{dK}{K^2}, $$
where $\text{OTM}(K,T)$ is the  price  of the corresponding out-of-the-money  vanilla contract (put  if $K< S_0$ and call otherwise). As $\mo_0^T = \partial_T \calO_0^T$, the instantaneous forward occupation surface can be extracted from a weighted sum of calendar spreads 
$\mo_0^T(A) = 2\int_A \partial_T\text{OTM}(K,T) \frac{dK}{K^2}. $ An illustration is given in  \cref{fig:fwdOccSurface} for S\&P 500 Index (SPX)  options. In short,  forward variance models can be refined by adding all the tradeable forward occupation times $\calO_o^{\cdot}(A)$.

\begin{figure}[H]
\centering
\caption{Forward occupation surfaces of SPX Options as of 2025/02/27, 4pm EST. Left: $\mo^T_0$ (instantaneous).   Right:  $\calO^T_0 = \int_0^T \mo_0^{s} ds$ (cumulative). }

\begin{subfigure}[b]{0.49\textwidth}
     \centering
\includegraphics[height=2.1in,width=2.75in]{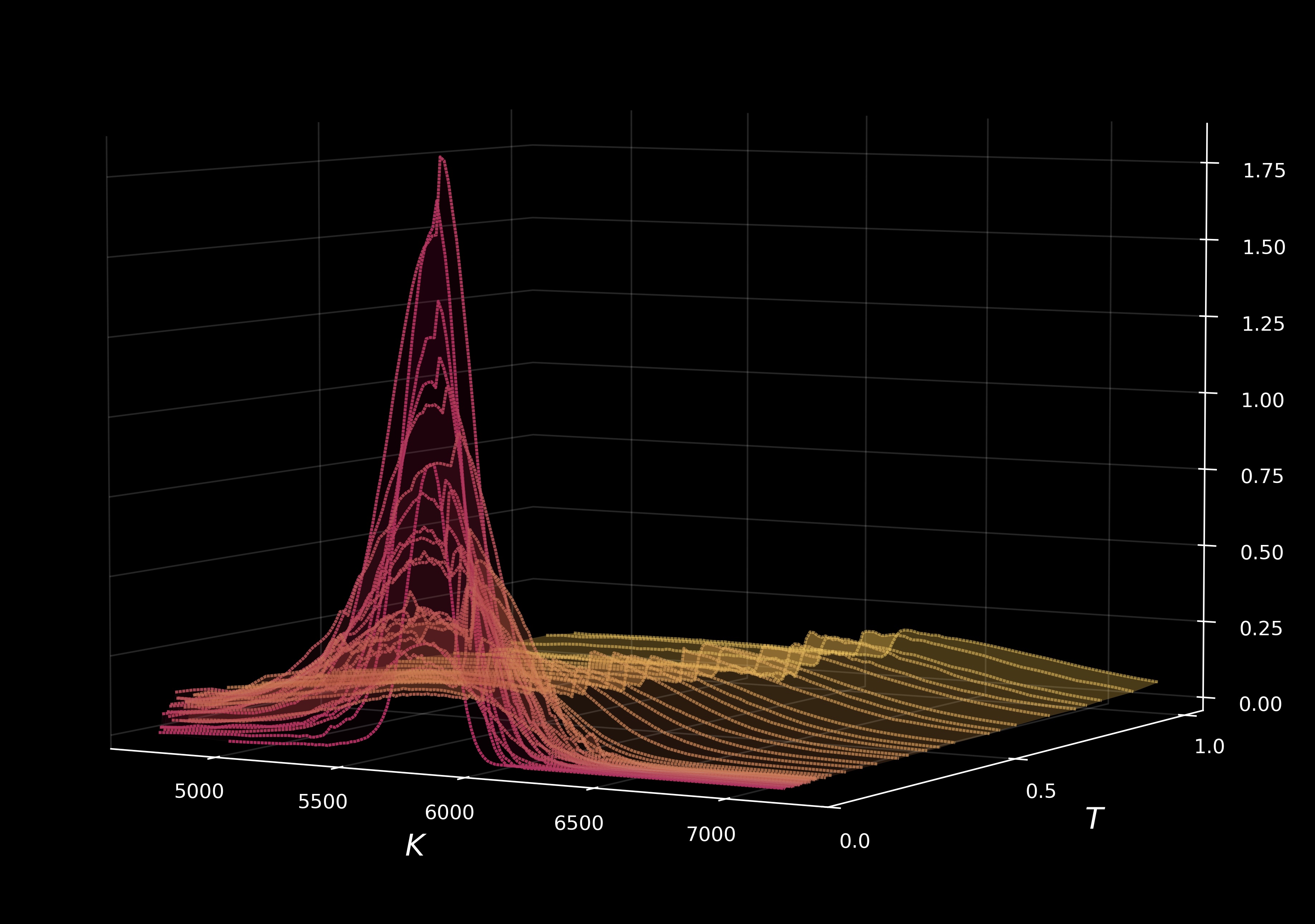}
\end{subfigure}
\begin{subfigure}[b]{0.49\textwidth}
     \centering
\includegraphics[height=2.1in,width=2.75in]{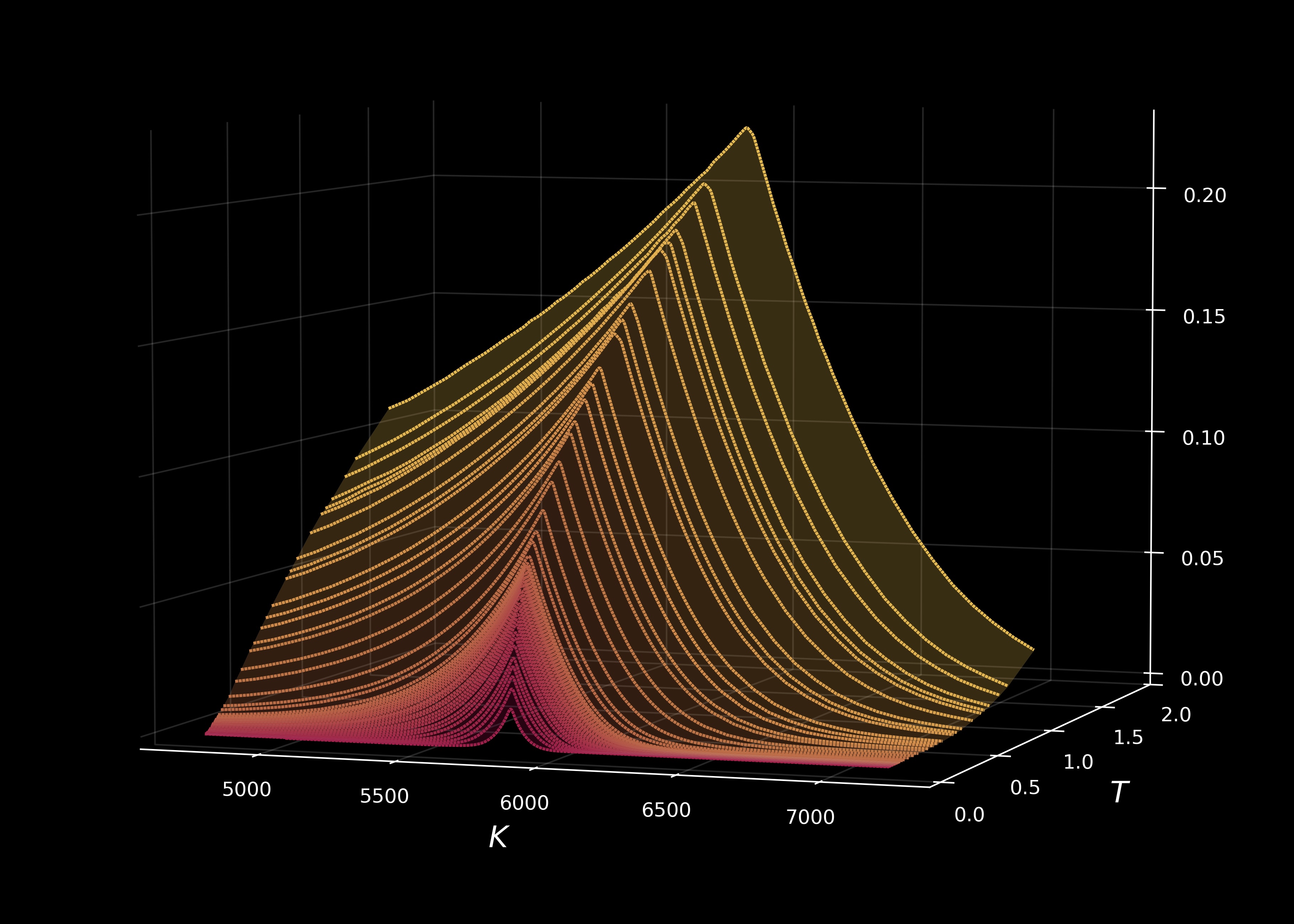}
 \end{subfigure}
\vspace{5mm}
 \label{fig:fwdOccSurface}
 \end{figure}



\paragraph{Dynamics of  Forward Occupation Flows}
For all $A\in \calB(\R)$,  $\mo^T_{\cdot}(A)$ is a martingale  adapted to the filtration of $W$. Hence, it admits the  representation
\begin{equation}\label{eq:martingale}
\mo_t^T(A) = \mo_0^T(A) + \int_0^t \eta^T_u(A)\cdot dW_u, \quad t\le T, 
\end{equation}
for some  process $\eta^T_{\cdot}(A)$ in $\R^d$. 
Moreover, for  disjoint $A,A' \in \calB(\R)$, 
$$\int_0^t \eta^T_u(A\cup A')\cdot dW_u = \mo_t^T(A \cup A') 
= \mo_t^T(A) + \mo_t^T(A') = \int_0^t (\eta^T_u(A) + \eta^T_u(A'))\cdot dW_u .$$
Hence $\eta^T_t(A\cup A') = \eta^T_t(A) + \eta^T_t(A')$ for all $ t\le T$, by   uniqueness of the martingale representation. Similarly, one can show that $\eta^T_t$ is $\sigma-$additive, hence a signed random measure. We can then write the dynamics of the instantaneous forward occupation flow as 
\begin{equation}\label{eq:omartingale}
    d\mo_t^T = \eta^T_t \cdot dW_t = \sum_{i=1}^d\eta^T_{t,i} \ dW_t^i,
\end{equation}
where $\eta^T_t \in \calM^d$ is a vector-valued measure.   
Similar to  \eqref{eq:fwdVarDynamics}, suppose  that $\eta_t^T = \eta(t,T,\mo_t^{\tau})$ 
for some map $\eta:\R_{+}^2\times \calM \to \calM^d$ 
and time $\tau = \tau(t,T)$. When $t \ge T$, w set $\eta_t^T = 0$ as $\mo_t^T$ is constant after $T$ (equal to $\delta_{X_T}V_T$).  
Moreover,  the forward occupation measure $\calO_t^T $ has $t-$dynamics, 
$$d\calO_t^T = d\left(\int_0^T \mo_t^{s} ds\right) = d\left(\int_t^T \mo_t^{s} ds\right)  = H_t^T \cdot dW_t,   $$
with volatility $H_t^T = \int_t^T\eta_t^sds = \int_t^T\eta(t,s,\mo_t^{\tau(t,s)})ds \in \calM^d$. Unlike $\calO$, the forward occupation flow $\calO^T$ does not play the role of time since its total mass process, the  variance swap rate $\text{VS}^T$, is 
of infinite variation. Leveraging the tools in \cref{sec:ITOMAIN} leads to the following  chain rule 
\begin{align}\label{eq:fwdChain}
   df(\calO^T_t) &=   (\delta_{\mo}f(\calO_t^T)\cdot H_t^T)\cdot dW_t +  \frac{1}{2} \delta_{\mo\mo}f(\calO_t^T)\cdot (H_t^T)^{\otimes 2}dt, \; f\in \calC^2(\calM), 
\end{align}
where $\phi\cdot H = \int_{\R}\phi dH$ if $\phi = \delta_{\mo}f(\calO_t^T)$, $H \in \calM^d$,  and  when  $\phi = \delta_{\mo\mo}f(\calO_t^T)$, 
$$\phi\cdot H^{\otimes 2} = \int_{\R^2}\phi(x,x')H(dx)\cdot H(dx') = \sum_{i=1}^d \int_{\R^2}\phi(x,x')H_i(dx) H_i(dx'). $$ As expected, a second-order term appears due to the martingality of $\calO^T$, further  confirming that $\calO^T$ differs from time. 

Directions for future research include the development of Feynman-Kac's formula in this context, and  numerical methods for the associated PDEs. The latter could then be used to price variance derivatives contingent on the terminal occupation measure $\calO_T$. While weighted variance swaps are readily available from the prices of vanilla options, the proposed model would permit the valuation of weighted variance derivatives, which do not seem to  be traded \textit{yet}. 
For instance, it would be relevant to  popularize  vanilla options on downside/upside variance,  
$$\varphi(\calO_T) = (\calO_T(A^{\pm}) - K)^+, \quad A^{-} = (-\infty,X_0],  \;  A^{+} = (X_0,\infty), $$ 
or  options on \textit{weighted VIX indexes},  
$$\text{VIX}_{T,w}^2 = \frac{1}{\Delta}w\cdot (\calO_T^{T+\Delta}-\calO_T) = \E^{\Q}_T\Big[\frac{1}{\Delta}\int_T^{T+\Delta}w(X_s)V_sds\Big], $$  
for some function $w:\R\to \R$ and horizon $\Delta >0$ (typically $30$ days).



\begin{example}\label{ex:BergomiModels} 
Inspired by  \citet{Bergomi}, let  $\tau = T$, $\kappa:\R_+\to \R_+^d$ be a vector-valued kernel,  and choose 
$\eta(t,T,\mo_t^{T}) = \eta_0 \kappa(T-t)\mo_t^T$, that is 
\begin{equation}
    \mo_t^T(A) = \mo_0^T(A) \exp\big\{\eta_0 Z_t - \frac{1}{2}\eta_0^2\E^{\Q}[Z_t^2]\big\},
\end{equation}
with the Gaussian Volterra process $Z_t = \int_0^t\kappa(t-s)\cdot dW_s $ and constant $\eta_0>0$. Note that $\eta(t,T,\mo_t^{T})$ is indeed a measure, possibly vector-valued when $d\ge 2$, 
and  $\mo_t^T(A)$ remains nonnegative. When $A = \R$, we recover the  Bergomi model $d\xi_t^T / \xi_t^T  = \eta_0 \kappa(T-t) \cdot dW_t$.  

\end{example}

\begin{example}\label{ex:AffineFwdModels}  (Affine forward occupation)
  Let  $\tau = t$ and choose $\eta(t,T,\mo_t^{t}) = \eta_0\sqrt{\mo_t^t} \kappa(T-t)$, $\kappa:\R_+\to \R_+^d$, namely
  $\eta_t^T(A) = \mathds{1}_{A}(X_t)\eta_0\sqrt{V_t}\kappa(T-t).$ When $A = \R$, we recover the affine forward model of \citet{GatheralKeller}, 
  $$ d\mo_t^T(\R) = d\xi_t^T = \eta_0\sqrt{V_t}\kappa(T-t)\cdot dW_t.$$ 
  Note that the log price $X$ is still an affine process since it depends on $\mo_t^T$ only through its total mass $\mo_t^T(\R) = \xi_t^T$, and $(X,\xi)$ admits an affine cumulant generating function as shown in  \cite{GatheralKeller}. The volatility of $\calO_t^{T} = \int_0^T \mo_t^sds$ is given by 
  \begin{equation}
      H_t^T = \delta_{X_t}\eta_0\sqrt{V_t}K(t,T),\quad K(t,T) = \int_t^T\kappa(s-t)ds \in \R^d.
  \end{equation}
  Consequently, the chain rule \eqref{eq:fwdChain} simplifies to 
  \begin{align*}
   df(\calO^T_t) =   \partial_{\mo}f(\calO_t^T,X_t) \eta_0 \sqrt{V_t}K(t,T)\cdot dW_t +  \frac{1}{2} \partial_{\mo\mo}f(\calO_t^T,X_t) \eta_0^2 V_t |K(t,T)|^2dt ,
\end{align*}
with the second order occupation derivative $\partial_{\mo\mo}f(\mo,x) = \delta_{\mo\mo}f(\mo)(x,x)$. 
It would be interesting to see if one can describe the characteristic function of the forward occupation times $\calO_t^T(A)$, e.g. using the  forest expansion in \citet{Friz2023forest}. This would give access to Fourier methods for  variance  derivatives contingent on  $w\cdot \calO_T = \int_0^Tw(X_t)V_tdt$, the floating leg of weighted variance swaps. 
Alternatively, one may adapt  the deep-learning algorithm of \citet{JackCurve} for curve-dependent PDEs to price weighted variance derivatives using finitely many forward occupation  curves $T\to \mo_t^{T}(A)$.

\end{example}

\appendix

\section{Chronology Invariance}\label{sec:occFunctionalEx}

Intuitively, the chronology of a path has no impact on its calendar time occupation measure. 
 Indeed, given $A\in \calB(\R)$ and $T>0$,  then   $\tilde{\calO}_T(A)$,  
 tells us how much time $X$ has spent  in $A$ up to $T$, but not \textit{when}. We can therefore "shuffle" pieces of the path  while leaving the occupation measure unchanged.  
This invariance  
carries over to occupation functionals $ f(\tilde{\calO}_T)$.  
We here provide some insights by  adopting the pathwise setting from \cref{sec:FITO}. 
Let  $\Omega_T = \{\omega: [0,T] \to \R, \, \text{càdlàg}\}$ as in \cref{sec:FITO}, and  $\text{Sym}(A)$ denote the symmetric group of some arbitrary set $A$. Consider the  partition $\{t_n = \frac{nT}{N}  :  n=0,\ldots,N\}$  of $[0,T]$ and the   subgroup $\frakS^N_T$ of $(\text{Sym}([0,T]),\circ)$  containing the time permutations
\begin{equation}\label{eq:timePerm}
    \psi(s) =  t_{\sigma(n)-\xi_n^+} \ + \ \xi_n (s-t_{n-1}) , \quad t \in  I_n := [t_{n-1},t_n) \quad  (I_N = [t_{N-1},T]),  
\end{equation}
where 
   $\xi \in \{\pm 1\}^{N}$ and $\sigma \in \text{Sym}(\{1,\ldots,N\})$. 
The transformations consists of two steps: (i) permute the intervals $I_n$ according to  $\sigma$  (ii) use   $\xi_n$ to determine if the restriction $\omega|_{I_{\sigma(n)}}$ is run forward ($\xi_n=1$) or backward $(\xi_n=-1)$.  See  \cref{fig:permutation}.  Next, consider the action  $ (\psi,\omega) \  \mapsto \ \omega \circ   \psi \in \Omega_T$ and set also  $\frakS_T := \bigcup_{N\ge 1} \frakS^N_T$. 

\begin{figure}[H]
\centering
\caption{ Original path $\omega$ (left), time permutation $\psi$ defined in \eqref{eq:timePerm} with $\xi = (-1,1,1,-1)$ and $\sigma((1,2,3,4)) = (3,2,4,1)$ (middle) and  transformed path $\omega \circ  \psi$ (right). }
     \centering
    \includegraphics[height=1.9in,width=5.7in]{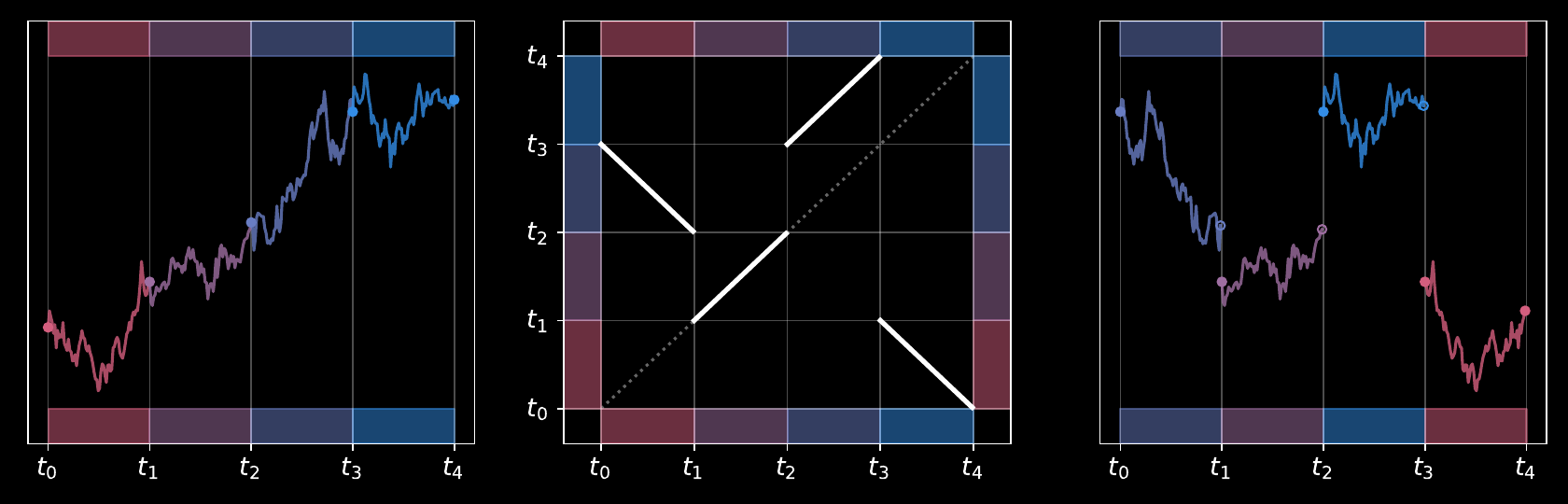}
 \label{fig:permutation}
 \end{figure}

\begin{definition}
    A path functional $\textnormal{f}:\Omega_T \to \R$ is called \textit{chronology-invariant} if $\textnormal{f}(\omega\circ \psi ) = \textnormal{f}(\omega)$ for all $\omega\in \Omega_T$, $\psi \in \frakS_T$.  
\end{definition}    
The next Theorem finally connects chronology invariance functionals with occupation functionals. 

\begin{theorem}
    \label{thm:chronology}  
The following are equivalent:
 
 \begin{itemize} 
     \item[\textnormal{I.}]  $\textnormal{f}:\Omega_T \to \R$ is chronology-invariant.
     \item[\textnormal{II.}]There exists   $f:\calM \to \R$ such that $\textnormal{f}(\omega) = f(\tilde{\calO}_T(\omega)).$
 \end{itemize}
\end{theorem}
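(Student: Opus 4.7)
The strategy is to first dispatch the easy implication (II) $\Rightarrow$ (I) directly from the change-of-variables formula, and then tackle (I) $\Rightarrow$ (II) by showing that two càdlàg paths sharing the same calendar-time occupation measure can be connected by rearrangements in $\frakS_T$, so that the quotient map $\omega \mapsto \tilde{\calO}_T(\omega)$ factors $\textnormal{f}$.

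For (II) $\Rightarrow$ (I), fix $\psi \in \frakS_T^N$ as in \eqref{eq:timePerm}. On each subinterval $I_n$ we have $|\psi'| \equiv 1$, and up to the finite set of grid points $\psi$ is a bijection of $[0,T]$ onto itself. Hence, for every Borel set $A\subset \R$,
\begin{equation*}
\tilde{\calO}_T(\omega\circ \psi)(A) = \int_0^T \mathds{1}_A(\omega_{\psi(s)})ds = \sum_{n=1}^N \int_{I_n}\mathds{1}_A(\omega_{\psi(s)})ds = \sum_{n=1}^N \int_{I_{\sigma(n)}} \mathds{1}_A(\omega_u)du = \tilde{\calO}_T(\omega)(A).
\end{equation*}
Therefore if $\textnormal{f}=f\circ \tilde{\calO}_T$, then $\textnormal{f}$ is chronology-invariant.

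For (I) $\Rightarrow$ (II), define $f$ on the image $\tilde{\calO}_T(\Omega_T)\subset \calM$ by setting $f(\mo):=\textnormal{f}(\omega)$ for any $\omega$ with $\tilde{\calO}_T(\omega)=\mo$ (and extend arbitrarily to $\calM$). The whole task is to show this assignment is well-posed: if $\tilde{\calO}_T(\omega)=\tilde{\calO}_T(\omega')$ then $\textnormal{f}(\omega)=\textnormal{f}(\omega')$. First consider the case where $\omega,\omega'$ are both piecewise constant on the common partition $\{I_n\}_{n\le N}$, with constant values $c_n, c_n'$. Then $\tilde{\calO}_T(\omega) = \tfrac{T}{N}\sum_n\delta_{c_n}$ and similarly for $\omega'$, so equality of these atomic measures produces $\sigma\in \mathrm{Sym}(\{1,\dots,N\})$ with $c_n' = c_{\sigma(n)}$. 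Taking $\xi_n\equiv 1$ in \eqref{eq:timePerm} yields $\psi\in \frakS_T^N$ such that $\omega\circ \psi$ and $\omega'$ agree off the finite grid; using that a second elementary permutation can correct finitely many endpoint values (or observing that the conclusion only cares about the càdlàg representative), chronology invariance gives $\textnormal{f}(\omega)=\textnormal{f}(\omega\circ \psi)=\textnormal{f}(\omega')$.

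For general càdlàg $\omega,\omega'$ with common occupation measure, the plan is to approximate along the uniform partition at step $T/N$: for instance set $\omega^{(N)}$ constant equal to $\tilde{\omega}_n := \avint_{I_n}\omega_s\,ds$ on $I_n$ (or use the left-endpoint value), and similarly for $\omega'^{(N)}$. Matching occupation-time quantiles pairs up the pieces of $\omega^{(N)}$ with those of $\omega'^{(N)}$ via a permutation $\sigma_N$, and then a single $\psi_N\in \frakS_T^N$ with $\xi_n\in\{\pm 1\}$ chosen appropriately (allowing reversal of a piece to better match endpoints) produces $\omega^{(N)}\circ \psi_N$ arbitrarily close to $\omega'^{(N)}$ as $N\to \infty$. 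The main obstacle is to pass from the discrete equality $\textnormal{f}(\omega\circ \psi_N)=\textnormal{f}(\omega)$ to $\textnormal{f}(\omega)=\textnormal{f}(\omega')$ in the limit, since no a priori continuity is imposed on $\textnormal{f}$. This is the delicate step, and it is handled by noting that $\omega\circ \psi_N\to \omega'$ in the Skorokhod $J_1$ topology together with $\tilde{\calO}_T(\omega\circ \psi_N)=\tilde{\calO}_T(\omega)=\tilde{\calO}_T(\omega')$; one then restricts to the (canonical) class of path functionals for which this implies equality, e.g.\ functionals of the form $F\circ \tilde{\calO}_T$ for Borel $F$, and verifies that chronology invariance forces $\textnormal{f}$ into this class. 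Equivalently, the final step identifies the orbit equivalence relation of $\frakS_T$ on $\Omega_T$ with the fibres of $\tilde{\calO}_T$, which is the core difficulty and is resolved by combining the discrete step with an approximation that preserves occupation measures exactly (rather than only weakly), using quantile couplings.
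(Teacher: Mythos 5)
Your (II) $\Rightarrow$ (I) direction and your treatment of the piecewise-constant case coincide with the paper's argument (match the atoms of the discrete occupation measures by a permutation, absorb reversals via $\xi_n$). The genuine gap is in the passage to general c\`adl\`ag paths. Your stated resolution of the ``delicate step'' --- ``one then restricts to the (canonical) class of path functionals of the form $F\circ \tilde{\calO}_T$ for Borel $F$, and verifies that chronology invariance forces $\textnormal{f}$ into this class'' --- is circular: membership in that class is exactly statement II, which is what you are trying to prove. Nor does the observation $\omega\circ\psi_N\to\omega'$ in the Skorokhod topology help, since no continuity (indeed not even measurability) is assumed of $\textnormal{f}$, so the exact identities $\textnormal{f}(\omega\circ\psi_N)=\textnormal{f}(\omega)$ do not pass to the limit by themselves. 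As written, the well-posedness of $f$ on the fibres of $\tilde{\calO}_T$ is therefore not established for general paths; the closing sentence about ``quantile couplings'' gestures at the right object but no argument is given.

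The paper sidesteps the well-posedness issue by construction: it defines an explicit canonical representative of each fibre, the ``sorted'' path $\omega^{\mo}_t=\sup\{x\in\R : \mo((-\infty,x])\le t\}$, checks $\tilde{\calO}_T(\omega^{\mo})=\mo$, and sets $f:=\textnormal{f}\circ\calR$ with $\calR(\mo)=\omega^{\mo}$. Since $f$ is defined directly on measures, nothing needs to be shown about comparing two arbitrary paths with the same occupation measure; it only remains to prove $\textnormal{f}(\omega)=\textnormal{f}(\omega^{\mo})$, i.e.\ that each path is connected to its nondecreasing rearrangement by the action of $\frakS_T$ --- exactly, via a sorting permutation, when $\omega$ is piecewise constant, and for general $\omega$ by piecewise-constant approximation followed by a passage to the limit. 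Your plan (pick an arbitrary representative and identify fibres with $\frakS_T$-orbits) is morally the same reduction, but the limiting step is precisely where it stalls; adopting the quantile-path representative and phrasing the approximation as ``$\omega$ versus its own rearrangement'' rather than ``$\omega$ versus $\omega'$'' is what recovers the paper's route.
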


In finance, the above equivalence  comes as  a useful tool to check if an exotic payoff is function of the calendar time occupation measure; see  \cref{sec:occPayoffs}.

\begin{proof} 
Clearly,  II. implies I. as the occupation measure is chronology invariant. For completeness,  fix $\omega \in \Omega_T$,  $\psi \in \frakS_T^N$ with permutation $\sigma$ and sign vector $\xi\in \{\pm 1\}^{N}$. First, we may assume that $\xi_n=1$ for all $n$ as clearly $\tilde{\calO}_T(\overleftarrow{\omega})=\tilde{\calO}_T(\omega)$, where $\overleftarrow{\omega}_t:= \omega_{T-t}$. From the additivity of $\tilde{\calO}_T$, we conclude that 
\begin{align*}
    \tilde{\calO}_T(\omega \circ \psi) = \sum_{n=1}^N  \int_{I_n} \delta_{\omega_{\psi(s)}} ds 
             = \sum_{n=1}^N  \int_{I_{\sigma(n)}} \! \delta_{\omega_s} ds  
             = \tilde{\calO}_T(\omega).
\end{align*} 
To prove I. $ \Longrightarrow $ II. ,  introduce the equivalence relation 
$$\omega \sim \omega' \ \Longleftrightarrow \  \tilde{\calO}_T(\omega) = \tilde{\calO}_T(\omega').$$ 
We then construct a  representative for the equivalence classes in $\Omega_T /\! \sim$ by characterizing the pre-image of $\tilde{\calO}_T:\Omega_T \to \calM$. 
Given $\mo = \tilde{\calO}_T(\omega)\in  \calM$ for some $\omega\in \Omega_T$, let  $\omega^{\mo}\in \Omega_T$ defined as 
\begin{equation}
  \omega_t^{\mo} = \sup\{x\in \R \ : \ \mo((-\infty,x]) \le t\}, \quad t \le T. 
\end{equation}
Noting  $ \omega^{\mo}_t\le x \ \Longleftrightarrow \ \mo((-\infty,x]) \ge t$ by construction, it is then easily seen that $\tilde{\calO}_T(\omega^{\mo}) = \mo$. 
We can therefore define the representation map 
$\calR:\calM \to \Omega_T/ \! \sim$, $\calR(\mo) = \omega^{\mo}$ and 
projection $\pi: \Omega_T \to  \Omega_T /\!  \sim$ as $\pi = \calR \circ \ \tilde{\calO}_T$. In short, 
$$ \pi: \Omega_T \ni \omega \ \overset{\tilde{\calO}_T}{\longrightarrow} \   \mo \ \overset{\calR}{\longrightarrow} \  \omega^{\mo} \in \Omega_T / \! \sim. $$

Next,  fix a chronology-invariant functional $\textnormal{f}:\Omega_T \to \R$ and $\omega\in \Omega_T$. 
If $\omega$ is constant in each subinterval of the partition $\{\frac{nt}{N} : n\le N\}$ for some $N\in \N$, then $\mo = \tilde{\calO}_T(\omega)$ is a discrete measure so the representative $\omega^{\mo}$ is piecewise constant as well. Thus, there exists a sorting permutation $\psi \in \frakS_T^N$  such that  $\omega \circ \psi$ is non-decreasing, and as $\tilde{\calO}_T(\omega\circ \psi) = \tilde{\calO}_T(\omega) = \mo$, we gather that $\pi(\omega) = \omega^{\mo} = \omega \circ \psi$. 
For general càdlàg $\omega$, the same reasoning can be applied by first taking a piecewise constant approximation of $\omega$ as seen in \cref{sec:occFunctionalEx}, followed by a passage to the limit. 
To conclude, set $f:= \textnormal{f}\circ \calR:\calM \to \R$, $\omega\in \Omega$, and $\psi\in \frakS_T$ such that $\pi(\omega) =  \omega \circ \psi$. Hence,   
$$f(\tilde{\calO}_T(\omega))  = (\textnormal{f}\circ \calR \circ \  \tilde{\calO}_T)(\omega) = (\textnormal{f} \circ\pi)(\omega) =  \textnormal{f}(\omega \circ \psi) = \textnormal{f}(\omega). $$
\end{proof}

\section{Proofs}\label{app:Proofs}

\subsection{\cref{thm:ItoOX}}\label{app:ItoOX}

\begin{proof} 
  Let  $\Pi^N = \{s=t_0 < \cdots < t_N = t\}$ be a sequence of partitions with mesh size   $\Vert \Pi^N\Vert = \max_{n}(t_{n+1} - t_n)$ decreasing to zero as $N\to \infty$.  Writing  $Y_t = f(\calO_{t},X_{t})$, then, 
   \begin{align}
       Y_t - Y_s &= \sum_{t_n\in\Pi^N} (Y_{t_{n+1}} - Y_{t_{n}}) \quad \qquad (t_{N+1} := t)\nonumber \\
        &= \sum_{t_n\in\Pi^N} (Y_{t_{n+1}} - f(\calO_{t_{n}},X_{t_{n+1}}))\label{eq:do}\\
        &+ \sum_{t_n\in\Pi^N} (f(\calO_{t_{n}},X_{t_{n+1}}) - Y_{t_{n}}).\label{eq:dx}
   \end{align}
    From the classical Itô's formula, 
  \eqref{eq:dx}  converges to 
   $$\int_s^t \partial_x f(\calO_u,X_u) dX_u + \int_s^t \frac{1}{2}\partial_{xx} f(\calO_u,X_u) d\langle X \rangle_u, $$ 
   since the occupation measure each  summand of \eqref{eq:dx} is "frozen" at time $t_n$. 
   We now compute the limit of \eqref{eq:do} as $N\to \infty$. In light of the definition of $\delta_\mo$ (\cref{def:linearderivative}), we set $\Delta_n = \calO_{t_n,t_{n+1}} = \calO_{t_{n+1}} - \calO_{t_{n}}$ for simplicity (note that $\Delta_n$ depends on $N$) 
   and rewrite  the summands in \eqref{eq:do} as 
   \begin{align*}
       Y_{t_{n+1}} - f(\calO_{t_{n}},X_{t_{n+1}}) = \int_0^1  \delta_\mo f(\calO_{t_{n}}+ \eta \Delta_n,X_{t_{n+1}}) \cdot \Delta_n d\eta  
      =  \delta_\mo f(\calO_{t_{n}}+ \eta_n \Delta_n,X_{t_{n+1}}) \cdot \Delta_n,
   \end{align*}
  for some $\eta_n \in [0,1]$ by the mean value theorem. Next,  the occupation time formula \eqref{eq:OTF} yields 
    \begin{align*}
      \sum_{t_n\in\Pi^N} ( Y_{t_{n+1}} - f(\calO_{t_{n}},X_{t_{n+1}})) &=   \sum_{t_n\in\Pi^N} 
      \delta_\mo f(\calO_{t_{n}}+ \eta_n \Delta_n,X_{t_{n+1}}) \cdot \Delta_n \\
      &= \sum_{t_n\in\Pi^N} 
      \int_{t_n}^{t_{n+1}} 
    \delta_\mo f(\calO_{t_{n}}+ \eta_n \Delta_n,X_{t_{n+1}})(X_u) d\langle X \rangle_u \\
       &= \int_s^t Z^N_u d\langle X \rangle_u, 
   \end{align*}
   with the step process
   $$    
     Z^N_u =   \sum_{t_n\in\Pi^N} \delta_\mo f(\calO_{t_{n}}+ \eta_n  \Delta_n,X_{t_{n+1}})(X_u) \ \mathds{1}_{[t_n,t_n+1)}(u),  \qquad [t_N,t_{N+1}) = \{t\}.$$
Recalling that  $\Vert \Pi^N\Vert = \max_{n}(t_{n+1} - t_n)$, then for every $n$ and $u\in [t_n,t_{n+1})$, 
$$\frac{1}{2}\frakm_1(\calO_{u},\calO_{t_{n}}+ \eta_n  \Delta_n)\le \frac{\lVert \calO_{t_n,u}\Vert_1  + \lVert \Delta_n\Vert_1}{2} \le \lVert \Delta_n\Vert_1 = \langle X \rangle_{t_{n+1}} - \langle X \rangle_{t_{n}}\le \varrho(\Vert \Pi^N\Vert),$$
where $\varrho = \varrho(\omega)$ is the modulus of continuity of $\langle X \rangle$ on the compact $[s,t]$. Similarly,   
$|X_{u} - X_{t_{n+1}}| \le \rho(\Vert \Pi^N\Vert)$ for all $u\in [t_n,t_{n+1})$,  where $\rho = \rho(\omega)$ is the modulus of continuity of $X$ on  $[s,t]$. 
Since  $(o,x,y) \mapsto \delta_\mo f(o,x)(y)$ is jointly continuous and $\lim_{N\to \infty}\Vert \Pi^N\Vert = 0$,
$$Z^N_u \to Z_u^{\infty} = \delta_\mo  f(\calO_{u},X_{u})(X_u) = \partial_o  f(\calO_{u},X_{u}), \;  \text{uniformly  on  $ [s,t]$},\; \Q-\text{a.s.}$$
Hence,   
$\lim_{N\to \infty} \int_s^t Z^N_u d\langle X \rangle_u = \int_s^t \partial_o  f(\calO_{u},X_{u}) d\langle X \rangle_u, \  \Q-$a.s. which completes the proof.

\end{proof}

\subsection{\cref{prop:Malliavin}}
\label{app:Malliavin}

\begin{proof}
    First, the additivity property of $\calO$  gives 
    $$\calO_T(\omega + h\mathds{1}_{[t,T]}) = \calO_t(\omega) + \int_t^T \delta_{\omega_s+h}ds = \calO_t(\omega) + \calO_{t,T}^h(\omega), \quad \forall A \in \calB(\R),$$
    where $\calO^h_{t,T}(\omega)(A) = \calO_{t,T}(\omega)(A-h)$. 
    That is,  parallel shifts of $\omega$ in $[t,T]$ translate into   translations of the increment occupation measure $\calO_{t,T}$. 
   Writing $\Delta_{t,T}^{h} = \calO_{t,T}^h - \calO_{t,T}$, then from the definition of $\delta_{\mo}$ and $\textnormal{f}$, we have 
    \begin{align*}
      \textnormal{f}(\omega + h\mathds{1}_{[t,T]}) - \textnormal{f}(\omega) 
        &= \int_0^1  (\delta_{\mo} f(\calO_T + \eta \Delta_{T}^{t,h}) \cdot  \Delta_{t,T}^{h})(\omega)\  d\eta = (\delta_\mo f(\calO_T + \eta^h \Delta_{T}^{t,h}) \cdot  \Delta_{t,T}^{h})(\omega), 
    \end{align*}
    for some $\eta^h\in [0,1]$. 
Formally,   note that 
$\lim\limits_{h \to 0} \frac{\Delta_{t,T}^{h}(A)}{h} = \lim\limits_{h \to 0}\int_A \frac{L^{x-h}_{t,T}-L^{x}_{t,T}}{h}dx = - \int_A \partial_xL_{t,T}^x dx $ (omitting the dependence on $\omega$), 
hence 
\begin{equation*}
    D_{t}f(\calO_T) = - \int_{\R}\delta_\mo f(\calO_T)(x) \partial_xL_{t,T}^x dx = \int_{\R}\partial_{xo} f(\calO_T,x) L_{t,T}^x dx =
    \int_t^T \partial_{xo}f(\calO_T,X_s)ds,
\end{equation*}
using integration by parts and the occupation time formula. 
Although local time of Brownian motion is $\Q-$a.s. nowhere differentiable (e.g. as a consequence of the first Ray-Knight theorem \cite[Chapter XI]{RevuzYor}), 
this heuristics   turns out to be correct. Indeed, 
    for all $\phi \in \calC^1(\R)$, 
     we have using the occupation time formula that 
     $$\frac{1}{h}  (\phi \cdot  \Delta_{t,T}^{h})=  \int_t^T \frac{\phi(X_s+h) - \phi(X_s)}{h} ds \longrightarrow \int_t^T \partial_x \phi(X_s) ds.$$ 
Noting that $ \delta_\mo f(\calO_T + \eta^h \Delta_{t,T}^{h})   =  \delta_\mo  f(\calO_T) +  \varepsilon_h$, $\lim_{h\to 0} \varepsilon_h = 0$, we can apply the above limit to $\phi  = \delta_\mo f(\calO_T)\in \calC^1(\R)$ and obtain \eqref{eq:malliavin}. 
 \end{proof}

    \subsection{\cref{thm:FK}}\label{app:FKBM}
 \begin{proof}
The first assertion follows from   the fact that $v(\calO,X)$ is an $(\F,\Q)-$martingale, as a consequence of \eqref{eq:heatPDE} and Itô's formula (\cref{thm:ItoOX})  applied to $v\in \calC^{1,2}(\calD)$. 
Hence $v(\mo,x) = \E_{\mo,x}^{\Q}[v(\calO_T,X_T)]$, which is \eqref{eq:valueFctal} in light of the terminal condition \eqref{eq:heatTerminal}. 

For the second part, it is enough to  verify that $v \in \calC^{2}_2(\calD)$ as the uniqueness then follows from the previous assertion. For all $\mo,\mo'\in \calM$, $\Delta =\mo'-\mo$, and condition \eqref{eq:linDer}, observe that  
\begin{align*}
    v(\mo',x)-v(\mo,x) &= \E^{\Q}[\varphi(\mo'+\calO_{T-t}^x,x + X_{T-t}) - \varphi(\mo+\calO_{T-t}^x,x + X_{T-t})]\\
   &=  \E^{\Q}\left[ \int_0^1 \delta_\mo\varphi(\mo+\calO_{T-t}^x + \eta \Delta,x + X_{T-t}) \cdot \Delta \ d\eta  \right]\\
        &=  \int_0^1 w(\mo+ \eta \Delta,x,\cdot)  \cdot \Delta \ d\eta, \\[1em]
        w(\mo,x,y) &= \E^{\Q}[\delta_\mo\varphi(\mo+\calO_{T-t}^x ,x + X_{T-t})(y)].
\end{align*}
Clearly, the joint continuity of $(\mo,x,y) \mapsto \delta_\mo\varphi(\mo,x)(y)$ carries over to $w$. Moreover, defining $C := \sup_{(\mo,x)\in \calD, y\in \R}\frac{|\delta_\mo\varphi(\mo ,x)(y)|}{1+|y|^2} $ which is finite as  $ \varphi \in \calC_2^{2}(\calD)$, we have  from Jensen's inequality  that 
$|w(\mo,x,y)| \le C(1+|y|^2)$, uniformly in $(\mo,x)$. 
Hence $\delta_\mo v = w$ in light of \cref{def:linearderivative}, which implies that  $v \in \calC^{1,0}(\calD)$.  Next, it is easily seen that  
$$\delta_{\mu \mu}v(\mo,x)(y,y') = \delta_\mo w(\mo ,x,y)(y') = \E^{\Q}[\delta_{\mu\mu}\varphi(\mo+\calO_{T-t}^x ,x + X_{T-t})(y,y')], $$ 
which is, similar to  $\varphi$,  continuous in $(\mo,x)$,    twice differentiable with respect to $(y,y')$ and satisfies the quadratic growth condition II. in \cref{def:linearderivative} (with respect to $y'$). 

Moving to the space derivatives, observe that  for every $f\in \calC^1(\calM)$ such that $y\mapsto \delta_\mo f(\mo)(y) \in \calC^1(\R)$, we have 
$\frac{d}{dh} f(\mo+\calO_{s}^{x+h}) = \partial_y \delta_\mo f(\mo+\calO_{s}^{x})\cdot \calO_{s}^{x}$ 
using similar arguments as in the proof of \cref{prop:Malliavin};  see \cref{app:Malliavin}. Thus, the chain rule and bounded convergence theorem yields 
\begin{align*}
     \partial_x v(\mo,x) &= \underbrace{\E^{\Q}[\partial_x\varphi(\mo+\calO_{T-t}^{x},x+X_{T-t})]}_{=: \ w_1^{\varphi}(\mo,x)} + \underbrace{\E^{\Q}[\partial_y \delta_\mo\varphi(\mo+\calO_{T-t}^{x},x+X_{T-t}) \cdot \calO_{T-t}^{x}]}_{=: \ w_2^{\varphi}(\mo,x)},
\end{align*}
which is jointly continuous.  
Next, it is immediate that $\partial_x w_1^{\varphi} = w_1^{\partial_x\varphi}+ w_2^{\partial_x\varphi} \in \calC(\calD)$. For $w_2^{\varphi}$, we compute 
\begin{align*}
   \partial_x w_2^{\varphi}(\mo,x) &=
     \E^{\Q}[(\partial_{x}+ \partial_{y})\partial_{y}\delta_\mo\varphi(\mo+\calO_{T-t}^{x},x+X_{T-t})\cdot \calO_{T-t}^{x}],\\[0.5em]
     &+     \E^{\Q}[\partial_{y'y} \delta_{\mu \mu} \varphi (\mo+\calO_{T-t}^{x},x+X_{T-t}) \cdot (\calO_{T-t}^{x} \otimes \calO_{T-t}^{x}) ] 
\end{align*}
where $\phi \cdot (\mo \otimes \mo' ) = \int_{\R^2} \phi(y,y')\mo(dy)\mo'(dy')$, $\phi:\R^2\to \R$,  
and 
using \cite[Lemma 2.2.4.]{CardDelLasLio}  to rewrite $\partial_{y'}\delta_\mo\partial_{y}\delta_\mo$ more compactly as $\partial_{y'y} \delta_{\mu \mu}$. Given our assumptions on $\varphi$, we  conclude that $ \partial_x w_2^{\varphi}\in\calC(\calD)$ and thus   $v\in\calC^{2}_2(\calD)$. 
 \end{proof}

 \subsection{\cref{thm:OSDE}}\label{app:OSDE}
\begin{proof}
     We use a fixed point argument similar to  \cite[Chapter IX]{RevuzYor}. 
Let $\calR$ consist  of all the pairs $(\calO,X)$, where 
$\calO$ (resp. $X$) is a continuous  measure-valued (resp. real-valued) $\F-$adapted process. Note that $\calO$ need not be the occupation flow of $X$. Moreover,  
let  $\vertiii{X}_{t} := \lVert X^*_t\rVert_{L^2(\Q)}$, $X^*_t = \sup_{s\in [0,t]}|X_s|$,   
and define the unit ball  $\calR_1\subset \calR$ as 
\begin{align}
     (\calO,X)\in \calR_1 \ &\Longleftrightarrow \ \lVert(\calO,X)\rVert :=  \vertiii{\varrho(\calO,X)}_{T}\le 1,\label{eq:OSDESup} 
\end{align} 
with  $\varrho(\calO,X) = (\varrho(\calO_t,X_t))_{t\in [0,T]}$. 
Next, set $\calO_0=X_0=0$ in \eqref{eq:OSDEOcc}$-$\eqref{eq:OSDEOcc}  for simplicity  
and 
introduce the map $\Psi = (\Psi_o,\Psi_x):\calR\to \calR$, with  
\begin{align*}
    \Psi_\mo(\calO, X)_t &=   \int_0^t \delta_{X_s}\sigma(\calO_s,X_s)^2ds \in \calM, \\[0.5em]
        \Psi_x(\calO, X)_t &=  \int_0^t b(\calO_s,X_s)ds +  \int_0^t \sigma(\calO_s,X_s)dW_s.
\end{align*}
We show that $\Psi$ admits a unique fixed point  in $\calR_1$ using the contraction mapping theorem. 

\noindent
\textit{Step 1} ($\Psi(\calR_1)\subseteq \calR_1$).
Let $(\calO,X)\in \calR_1$ and  write 
$(\bar{\calO},\bar{X}) = \Psi(\calO,X)$. If 
$f_t = f(\calO_t,X_t)$, $f\in \{b,\sigma\}$, 
we obtain from Cauchy-Schwarz inequality that 
$
    \frac{1}{2}|\bar{X}_t|^2  
    \le t \int_0^t b_s^2ds + M_t^2, 
  $
where  $M_t := \int_0^t \sigma_s dW_s$ is a  true martingale due to the growth condition \eqref{eq:OSDEGrowth} satisfied by $\sigma$.  
Hence    $\vertiii{M}_T\le 2 \lVert M_T\rVert_{L^2(\Q)}$ from  Doob's inequality. Next, Itô isometry  gives 
\begin{align*}
    \frac{1}{2}\vertiii{\bar{X}}_T^2 
    &\le \big  \lVert \sup_{t\in [0,T]} t\int_0^t  b_s^2 ds \big \rVert_{L^2(\Q)}^2 + 4 \lVert M_T\rVert_{L^2(\Q)}^2 \\[0.75em] 
      &\le   \int_0^T \left[T\lVert b_t \rVert_{L^2(\Q)}^2 + 8 \lVert \sigma_t\rVert_{L^2(\Q)}^2\right]dt 
      \\[0.75em] 
     &\le  K^2(T+8)\int_0^T (1+\lVert \varrho(\calO_t,X_t) \rVert_{L^2(\Q)}^2)dt \\[0.75em] 
      &\le    K^2T(T+8)(1 +  \lVert(\calO,X)\rVert^2), 
\end{align*}
using that $\lVert \sigma_t\rVert_{L^2(\Q)}^2 \le K^2 (1+\lVert \varrho(\calO_t,X_t) \rVert_{L^1(\Q)}) \le 2K^2(1+\lVert \varrho(\calO_t,X_t) \rVert_{L^2(\Q)}^2)$, which follows from  \eqref{eq:OSDEGrowth} and the elementary inequality $1+\rho \le 2(1+\rho^2)$. 
Recalling that  $\lVert(\calO,X)\rVert\le 1$, we thus obtain $\vertiii{\bar{X}}_T^2 \le 4K^2T(T+8)$. 
Next, observe that     $\sup_{t\in [0,T]}|\bar{\calO}_t|_{\textnormal{\tiny{BL}}}  = |\bar{\calO}_T|_{\textnormal{\tiny{BL}}} $ since the flow $\bar{\calO}$ is  nonnegative and nondecreasing.   
Then, for a bounded Lipschitz function $\phi$ with $\lVert \phi\rVert_{\textnormal{\tiny{BL}}}\le 1$,  the occupation time formula \eqref{eq:OTF} and \eqref{eq:OSDEGrowth} yield 
   \begin{align*}
       \phi\cdot \bar{\calO}_T 
   &=   \int_0^T \phi(X_t) \sigma_t^2 dt \le K^2\lVert \phi\rVert_{\infty} \int_0^T (1+ \varrho(\calO_t,X_t))dt,\\[1em]
   \Longrightarrow |\! \calO_T\! |_{\textnormal{\tiny{BL}}}  &= \sup\{\phi\cdot \bar{\calO}_T : \lVert \phi \rVert_{\textnormal{\tiny{BL}}} \le 1\}\le  K^2\int_0^T (1+ \varrho(\calO_t,X_t))dt. 
   \end{align*}
Thus $\lVert|\! \calO_T\! |_{\textnormal{\tiny{BL}}}\rVert_{L^1(\Q)}\le 2K^2 T$,  
and
$\Psi(\calR_1)\subseteq \calR_1$ for all $T$ that satisfies  $C_1(T):= 2K^2T(2T+17)\le 1$. 

\noindent
\textit{Step 2} (Contraction property of $\Psi$). Let   $(\calO,X),(\calO',X')\in \calR_1$ and write $(\bar{\calO}, \bar{X}) = \Psi(\calO,X)$, $(\bar{\calO}', \bar{X}') = \Psi(\calO',X')$. Using   condition \eqref{eq:OSDELip}, then 
\begin{align*}
    \frac{1}{2}\vertiii{\bar{X}-\bar{X}'}_T^2 
      &\le   \int_0^T T\lVert b_t' - b_t\rVert_{L^2(\Q)}^2  + 4 \lVert \sigma_t' - \sigma_t\rVert_{L^2(\Q)}^2 dt \\ 
      &\le K^2T(T+4)\lVert(\calO'-\calO,X'-X)\rVert^2.  
\end{align*}
We now provide an estimate of $|\bar{\calO}_t' -\bar{\calO}_t|_{\textnormal{\tiny{BL}}} $. 
For any bounded Lipschitz function $\phi$ such that $\lVert \phi\rVert_{\textnormal{\tiny{BL}}}\le 1$,  the occupation time formula \eqref{eq:OTF} yields  
\begin{align*}\label{eq:phiOSDE}
   \phi\cdot (\bar{\calO}_t'-\bar{\calO}_t) 
   =  
   \int_0^t (\phi(X_s') (\sigma'_s)^2 -   \phi(X_s) \sigma_s^2) ds 
    \le  \int_0^t\left| \phi(X_s') (\sigma'_s)^2 - \phi(X_s) \sigma_s^2 \right|ds.
\end{align*} 
Moreover, 
 the  conditions  \eqref{eq:OSDEGrowth}$-$ \eqref{eq:OSDELip} give for all  $ (\mo,x),(\mo',x') \in \calD$,
\begin{align*}
   \frac{| \phi(x') \sigma(\mo',x')^2 - \phi(x) \sigma(\mo,x)^2 |}{\varrho(\mo'-\mo,x'-x)} &\le  \underbrace{\frac{|\phi(x')-\phi(x)|}{|x'-x|}}_{\le 1} |\sigma(\mo',x')|^2 \\
   &+ \underbrace{| \phi(x)|}_{\le 1} \frac{|\sigma(\mo',x')^2 -  \sigma(\mo,x)^2|}{\varrho(\mo'-\mo,x'-x)} \\[1em] 
   &\le K^2(1 + \varrho(\mo',x'))+ K \big( |\sigma(\mo',x')| + |\sigma(\mo,x)|\big)\\[1em] 
     &\le \underbrace{K^2\left[ 3 +  2 \varrho(\mo',x') +  \varrho(\mo,x)\right]}_{=: \ \Xi(\mo,\mo',x,x')}
\end{align*}
using the  coarse estimate 
$ \sqrt{ 1+\varrho} \le 1+\varrho$
in the last inequality. 
Defining $ \xi_s := \Xi(\calO'_s,\calO_s,X_s',X_s)$, $\varrho_s := \varrho(\calO'_s-\calO_s,X_s'-X_s)$, then 
$$| \bar{\calO}_t'-\bar{\calO}_t |_{\textnormal{\tiny{BL}}} 
   \le   
 \int_0^t\xi_s \varrho_s ds \le T \xi^* \varrho^*, $$ 
 with  $\xi^* = \sup_{t\in [0,T]}\xi_t$ and similarly for $\varrho^*$. 
Since $(\calO,X),(\calO',X')\in \calR_1$, we obtain that $\lVert \xi^*  \rVert_{L^2(\Q)}^2 \le 3K^4 \big[9+4\lVert (\calO',X')\rVert^2 + \lVert (\calO,X)\rVert^2  \big] \le 42K^4$, hence      
\begin{align*}
  \frac{1}{T} \E^{\Q} \big[ \sup_{t\in [0,T]}| \bar{\calO}_t'-\bar{\calO}_t |_{\textnormal{\tiny{BL}}}\big] \le  
  \lVert \xi^*  \rVert_{L^2(\Q)}\lVert  \varrho^* \rVert_{L^2(\Q)} \le \sqrt{42}K^2 \lVert (\calO'-\calO,X'-X)\rVert.
\end{align*}
Altogether, we conclude that 
$\lVert \Psi(\calO',X') -  \Psi(\calO,X)\rVert^2 \le C_2(T)\lVert (\calO',X')-(\calO,X)\rVert$, with $  C_2(T) =  K^2T(T+4+\sqrt{42}) < C_1(T).$ 
When $C_1(T) < 1$, $\Psi$ is thus a contraction from $\calR_1$ into itself and  admits a unique fixed point, i.e.,  a strong solution of \eqref{eq:OSDEOcc}$-$\eqref{eq:OSDE}. When $C_1(T) \ge 1$, we split $[0,T]$ into $N\in \N$ intervals of length $\delta T = \frac{T}{N}$ with  $C_1(\delta T) < 1$ and   successively concatenate  the  strong solutions from   each interval. 
\end{proof}

\subsection{\cref{prop:vEpsEU}}\label{app:vEpsEU}

\begin{proof}
   Let $\overleftarrow{X}_s = X_{T-s}-X_T$ and denote its flow of occupation measures by $\overleftarrow{\calO}$ and local time field by $\overleftarrow{L}$. Then, $L_T^{X_T+z}  = \overleftarrow{L_T^{z}}$ for all $z \in \R$, which implies that 
   $$\varphi^{\varepsilon}(\calO_T,X_T) = \avint_{B_{\varepsilon}}L_T^{X_T+z}dz = \avint_{B_{\varepsilon}} \overleftarrow{L_T} \ d\lambda = \varphi^{\varepsilon}(\overleftarrow{\calO}_T,0).$$
Thus,   
    \begin{align*}
        v^{\varepsilon}(T) &=  \E^{\Q}[\varphi^{\varepsilon}(\overleftarrow{\calO}_T,0)] = \avint_{B_{\varepsilon}} \E^{\Q}[\overleftarrow{L}_T]d\lambda = \avint_{[0,\varepsilon]}\E^{\Q}[L_T^x]dx \  \uparrow \ \E^{\Q}[L_T^0],
    \end{align*}
    where we used that $\text{Law}_{\Q}(\overleftarrow{L}_T^x) = \text{Law}_{\Q}(L_T^x)$  and $\partial_x \E^{\Q}[L_T^x] < 0$ $\forall x> 0$ to prove the monotone convergence. 
    Finally, recall from \cref{prop:EUPrice} that $\E^{\Q}[L_T^0]=v(T)$. For the second assertion, notice from Tanaka's formula that $\E^{\Q}[L_T^x] = 2c(T,x)$, where  $c(T,x) = \E^{\Q}[(X_T-x)^+]$  is the  price of a call option struck at $x$ with maturity $T$ in a Bachelier model with unit volatility. Similar to \cite{BreedenLitzenberger}, 
    we have $\partial_x c(T,x) = \Q(X_T \le x) - 1 = \Phi_T(x)-1 $ and $\partial_{xx} c(T,x)  = \phi_T(x)$, where $\Phi_T,\phi_T$ is the CDF and PDF of $\calN(0,T)$, respectively. 
    Hence, a second order Taylor expansion of $x\mapsto c(T,x)$ at $x=0$ yields
    $$v^{\varepsilon}(T) = \frac{2}{\varepsilon}\int_0^{\varepsilon}c(T,x)dx = v(T) - \frac{\varepsilon}{2} + \frac{\varepsilon^2/3}{\sqrt{2\pi T}} (1 +  r_\varepsilon), \quad \lim_{\varepsilon\downarrow 0} r_\varepsilon = 0. $$
\end{proof}

\subsection{\cref{prop:conv}}\label{app:conv}

\begin{proof} 
    For every stopping time $\tau\in \vartheta(\calT)$, $x\in \R$, we have by Jensen's inequality that 
$$|\varphi^{\varepsilon}(\calO_\tau,x) - L_\tau^x|^2 = \left|\avint_{B_{\varepsilon}}(L_\tau^{x+z} - L_\tau^x) \  dz \right|^2 \le \avint_{B_{\varepsilon}}|L_\tau^{x+z} - L_\tau^x|^2 \  dz. $$
Moreover, there exists $C >0$ (to be determined)  such that 
$$\left\lVert\sup_{(t,x)\in [0,T]\times  \R}|L_t^{x+z} - L_t^{x}|\right\rVert_{L^2(\Q)}^2 \le C |z|, \quad |z|\le 1.$$
Indeed, writing $L^x(Y)$ for the local time of some process $Y$ at $x$ and $\vertiii{Y} :=  \lVert \sup_{t\in [0,T]} Y_t \rVert_{L^2(\Q)}$,
  we can apply Theorem 6.5 in \citet{BarlowYor} with $\gamma = 2$ to the continuous martingales $M = X - z$, $N = X$. This yields
\begin{align*}
    \left\lVert\sup_{(t,x)\in [0,T]\times  \R}|L_t^{x+z} - L_t^{x}|\right\rVert_{L^2(\Q)}^2 &= \left\lVert\sup_{(t,x)\in [0,T]\times  \R}|L_t^{x}(M) - L_t^{x}(N)|\right\rVert_{L^2(\Q)}^2 \\[1em] 
    &\le C_2^2 \ (\vertiii{M} + \vertiii{N}  ) \underbrace{\vertiii{M-N}}_{ = \ |z|} \  \max \left(\log \frac{\vertiii{M} + \vertiii{N} }{\vertiii{M-N}}, 1\right) \\
    &\le 2 C|z|, 
\end{align*}
with  $C = 2 C_2^2 \ \sqrt{T+1} \max(\log  4 \sqrt{T+1} / |z|,1) $\footnote{note that 
$\frac{1}{4}\vertiii{N}^2 \le \frac{1}{4}\vertiii{M}^2 \le \ \rVert X_T-z\lVert_{L^2(\Q)}^2 = T +|z|^2 \le T+1$ using Doob's  inequality and $|z| \le 1$. Hence $\vertiii{M} + \vertiii{N} \le 4\sqrt{T+1}$.}   
and $C_2$ as in \cite[Theorem 6.5]{BarlowYor}. 
Thus,
      \begin{align*}
         \left\lVert\varphi^{\varepsilon}(X_{\tau},\calO_{\tau}) - L_{\tau}^{X_{\tau}} \right\rVert_{L^2(\Q)}^2 &\le \avint_{B_{\varepsilon}} \left\lVert L_{\tau}^{X_{\tau}+z} - L_{\tau}^{X_{\tau}}\right\rVert_{L^2(\Q)}^2  \  dz
          \le 2C \avint_{B_{\varepsilon}}|z|  dz = C \varepsilon, \quad \forall \ \varepsilon \le 1.  
      \end{align*}
      The second assertion  follows from Jensen's inequality. 
\end{proof}

\section{Algorithms}\label{app:Algos}

\subsection{Least Square Monte Carlo for Spot Local Time}\label{app:LSMC}

\cref{alg:LSMC} summarizes the Least Square Monte Carlo approach seen in \cref{sec:American}. Notice the exponential transformation of the discrete Brownian in step IV.2. so that all variables  are nonnegative. We  choose a family of functions on $\R_+$, namely  the Laguerre polynomials $(p_k)$, up to degree $3$, which are orthonormal in the weighted Hilbert space $L^2(\R_+,w d\lambda)$, $w(x) = e^{-x}$. The basis functions $\phi_k$ in \cref{alg:LSMC} then corresponds to  evaluations of the feature variables $\calO_{t_n}^{\bar{m}}, \bar{X}_{t_n}$  through the  polynomials. 
Note that $p_1(\calO_{t_n}^{\bar{m}}) := (p_1(L_{t_n}^{x_{m-\bar{m}}}),\ldots, p_1(L_{t_n}^{x_{m+\bar{m}}}) )$ 
contains the intrinsic value $L_{t_n}^{X_{t_n}}$ itself (up to a constant) which is of course valuable information. 
One can also choose instead, as in \cite{LS}, the better-behaved Laguerre polynomials weighted by $\sqrt{w}$, yielding similar results. 

In both phases, we use antithetic sampling, i.e.,  we simulate $X(\omega_j)$ for $j=1,\ldots,J/2$ (provided that $J$ is even) and set $X(\omega_j) = -X(\omega_{j-J/2})$ for $j=J/2 +1,\ldots,J$,  $J\in \{J_{\text{off}},J_{\text{on}}\}$.  

\begin{algorithm}[H]
\caption{(Least Square Monte Carlo,  Spot Local Time) }\label{alg:LSMC}

\vspace{1mm}
 $T>0$, $N \in \N$,  $\bar{m}\ll N$ (truncation level), $J_{\text{off}}$ (\# simulations, offline phase), 
  $J_{\text{on}}$ (\# simulations,  online phase), $\{\phi_k : k=1,\ldots ,K\}$ (basis functions)
 \\[-0.8em] 
\noindent\rule{\textwidth}{0.5pt} 
\textbf{\underline{OFFLINE}}
\begin{itemize}
 \setlength \itemsep{0.15em}

\item[I.] \textbf{Space-time grid} $\begin{cases}
     t_n \ = n\delta t, \; \  &n=0,\ldots, N, \quad  \delta t = \frac{T}{N}, \\
     x_m =  m \sqrt{3\delta t},  \; \  &m=-N,\ldots,N
\end{cases}$
 
\item[II.] \textbf{Generate} paths $X(\omega_j)$,  $j=1,\ldots,J_{\text{off}}$, where  
\begin{align*}
  X_{t_{n+1}} = X_{t_n} + Z_{n+1} \sqrt{\delta t}, \quad  X_0 = 0, \quad Z_{1},\ldots,Z_{N} \overset{i.i.d.}{\sim} \nu = \frac{1}{6}\left(\delta_{-\sqrt{3}} + 4\delta_0 + \delta_{\sqrt{3}}\right) 
\end{align*}

\item[III.] \textbf{Terminal value} $Y_{T}= L_T^{X_T} = \frac{1}{2\varepsilon}\sum_{n=1}^N  \mathds{1}_{\{X_{t_n} = X_T\}} \delta t$, $\ \varepsilon = \frac{x_{m+1}-x_m}{2} = \frac{\sqrt{3\delta t }}{2}$.

\item[IV.] \textbf{For} $n=N-1,\ldots,\bar{m}$: 
\begin{enumerate}


     \item \textbf{Let} $\calJ = \{j =1,\ldots,J_{\text{off}} \ : \ X_{t_n}(\omega_j) = x_m, \ |m| \le n- \bar{m}\}$

    \item \textbf{Feature variables}:  $\bar{X}_{t_n} = e^{X_{t_n}}$,  $\calO_{t_n}^{\bar{m}} =(L_{t_n}^{x_{m-\bar{m}}},\ldots, L_{t_n}^{x_{m+\bar{m}}} )$, 
    $x_m = X_{t_n}$.
    
    \item \textbf{Regression: } 
    $\hat{\beta}^n = \text{argmin}_{\beta} \sum_{j\in \calJ} \left | Y_{t_{n+1}}- \sum_{k=1}^K \beta_k \ \phi_k(\calO_{t_n}^{\bar{m}}, \bar{X}_{t_n})\right|^2(\omega_j)$.

     \item \textbf{Continuation value:} $C_{t_n} = \sum_{k=1}^K \hat{\beta}_k^n \ \phi_k(\calO_{t_n}^{\bar{m}}, \bar{X}_{t_n})$.
     
     \item \textbf{Update:} $Y_{t_n}(\omega_j) = \begin{cases}
         
         Y_{t_{n+1}}(\omega_j) , & C_{t_n}(\omega_j) > L_{t_n}^{X_{t_n}}(\omega_j) \text{ or } j\notin \calJ, \\
         L_{t_n}^{X_{t_n}}(\omega_j), & \text{otherwise}. 
     \end{cases}  $

\end{enumerate}
   \end{itemize}

\textbf{\underline{ONLINE}}   
\begin{itemize}
    \item[I'.] \textbf{Generate} paths $X(\omega_j)$,  $j=1,\ldots,J_{\text{on}}$ as in II.  

\item[II'.] \textbf{Repeat} steps III., IV.1, 2, 4, 5 using $\hat{\beta}^{N-1},\ldots, \hat{\beta}^{\bar{m}}$ from IV.3.

\item[III'.] \textbf{Return} $Y_0 =\frac{1}{J_{\text{on}}}\sum_{j=1}^{J_{\text{on}}} Y_{t_1}(\omega_j) $.
\end{itemize}

\end{algorithm}

\bibliographystyle{abbrvnat}
\bibliography{main.bib}

@article{BarlowYor,
title = {Semi-martingale inequalities via the {Garsia-Rodemich-Rumsey} lemma, and applications to local times},
journal = {Journal of Functional Analysis},
volume = {49},
number = {2},
pages = {198-229},
year = {1982},
author = {M.T Barlow and M Yor}
}

@article{bayraktar,
author = {Bayraktar, Erhan and Feng, Qi and Zhang, Zhaoyu},
title = {Deep Signature Algorithm for Multidimensional Path-Dependent Options},
journal = {SIAM Journal on Financial Mathematics},
volume = {15},
number = {1},
pages = {194-214},
year = {2024}
}

@article{Bayer,
author = {Christian Bayer and Paul P. Hager and Sebastian Riedel and John Schoenmakers},
title = {{Optimal stopping with signatures}},
volume = {33},
journal = {The Annals of Applied Probability},
number = {1},
publisher = {Institute of Mathematical Statistics},
pages = {238 -- 273},
keywords = {deep learning, fractional Brownian motion, Optimal stopping, Rough paths, signature},
year = {2023}
}

@article{Bhatt,
 author = {Abhay G. Bhatt and Vivek S. Borkar},
 journal = {The Annals of Probability},
 number = {3},
 pages = {1531--1562},
 publisher = {Institute of Mathematical Statistics},
 title = {Occupation Measures for Controlled Markov Processes: Characterization and Optimality},
 urldate = {2023-11-13},
 volume = {24},
 year = {1996}
}

@article{Becker,
  author  = {Sebastian Becker and Patrick Cheridito and Arnulf Jentzen},
  title   = {Deep Optimal Stopping},
  journal = {Journal of Machine Learning Research},
  year    = {2019},
  volume  = {20},
  number  = {74},
  pages   = {1-25},
}

@book{Bergomi,
  title={Stochastic Volatility Modeling},
  author={Bergomi, L.},
  series={Chapman and Hall/CRC Financial Mathematics Series},
  year={2016},
  publisher={CRC Press}
}

@article{BlumenthalGetoor,
  author  = {Blumenthal, R. M. and Getoor, R. K.},
  title   = {Local times for Markov processes},
  journal = {Zeitschrift für Wahrscheinlichkeitstheorie und Verwandte Gebiete},
  year    = {1964},
 volume  = {3},
  pages   = {50-74},
}

@article{BouchardTan,
      title={On the regularity of solutions of some linear parabolic path-dependent {PDE}s}, 
      author={Bruno Bouchard and Xiaolu Tan},
      year={2023},
      journal={arXiv:2310.04308}
}

@article{BreedenLitzenberger,
 author = {Douglas T. Breeden and Robert H. Litzenberger},
 journal = {The Journal of Business},
 number = {4},
 pages = {621--651},
 publisher = {University of Chicago Press},
 title = {Prices of State-Contingent Claims Implicit in Option Prices},
 urldate = {2023-10-10},
 volume = {51},
 year = {1978}
}

@book{CardDelLasLio,
  title={The Master Equation and the Convergence Problem in Mean Field Games:(AMS-201)},
  author={Cardaliaguet, P. and  Delarue, F. and  Lasry, J.-M. and Lions,  P.-L.},
  year={2019},
volume = {201},
  publisher={Princeton University Press}
}

@book{CarmonaDelarue,
  title={Probabilistic Theory of Mean Field Games with Applications I-II},
  author={Carmona, R. and Delarue, F.},
  series={Probability Theory and Stochastic Modelling},
  year={2018},
  publisher={Springer International Publishing}
}

@article{carole,
author = {Bernard, Carole and Cui, Zhenyu},
year = {2010},
month = {08},
pages = {},
title = {Pricing Timer Options},
volume = {15},
journal = {Journal of Computational Finance}
}

@article{ChesneyJeanblancYor,
 author = {Marc Chesney and Monique Jeanblanc-Picqué and Marc Yor},
 journal = {Advances in Applied Probability},
 number = {1},
 pages = {165--184},
 publisher = {Applied Probability Trust},
 title = {{B}rownian Excursions and {P}arisian Barrier Options},
 urldate = {2023-07-31},
 volume = {29},
 year = {1997}
}

@article{CoxKallblad,
author = {Cox, Alexander M. G. and K\"{a}llblad, Sigrid},
title = {Model-Independent Bounds for Asian Options: A Dynamic Programming Approach},
journal = {SIAM Journal on Control and Optimization},
volume = {55},
number = {6},
pages = {3409-3436},
year = {2017}}

@article{Cuchiero2021,
title = {Measure-valued affine and polynomial diffusions},
journal = {Stochastic Processes and their Applications},
volume = {175},
pages = {104392},
year = {2024},
author = {Christa Cuchiero and Luca {Di Persio} and Francesco Guida and Sara Svaluto-Ferro}
}

@article{CuchieroLarsson,
author = {Christa Cuchiero and Martin Larsson and Sara Svaluto-Ferro},
title = {{Probability measure-valued polynomial diffusions}},
volume = {24},
journal = {Electronic Journal of Probability},
publisher = {Institute of Mathematical Statistics and Bernoulli Society},
pages = {1 -- 32},
keywords = {Dual process, Fleming–Viot type processes, interacting particle systems, Martingale problem, maximum principle, Polynomial processes, probability measure-valued processes},
year = {2019}
}

@incollection{Dawson93,
  added-at = {2008-03-14T00:37:48.000+0100},
  address = {Berlin},
  author = {Dawson, Donald A.},
  biburl = {https://www.bibsonomy.org/bibtex/2a327a37e0730e572e14ec6aa070f25ba/peter.ralph},
  booktitle = {\'Ecole d'\'Et\'e de Probabilit\'es de Saint-Flour XXI---1991},
  interhash = {2690c196091eb1741aa622da2645ad1c},
  intrahash = {a327a37e0730e572e14ec6aa070f25ba},
  keywords = {book superprocesses},
  mrclass = {60G57 (60H15 60J70 60J80)},
  mrnumber = {MR1242575 (94m:60101)},
  mrreviewer = {Luis G. Gorostiza},
  pages = {1--260},
  publisher = {Springer},
  series = {Lecture Notes in Math.},
  timestamp = {2013-08-30T18:40:01.000+0200},
  title = {Measure-valued {Markov} processes},
  volume = 1541,
  year = 1993
}

@article{Dawson75,
title = {Stochastic evolution equations and related measure processes},
author = {Dawson, Donald A.},
journal = {Journal of Multivariate Analysis},
volume = {5},
number = {1},
pages = {1-52},
year = {1975}
}

@book{Detemple,
  title={American-Style Derivatives: Valuation and Computation},
  author={Detemple, J.},
  series={Chapman and Hall/CRC Financial Mathematics Series},
  year={2005},
  publisher={CRC Press}
}

@book{DiNunno,
author = {Di Nunno, Giulia and Øksendal, Bernt and Fran Proske},
title = {{M}alliavin Calculus for {L}évy Processes with Application to Finance},
publisher = {Springer}, 
year = {2009}
}

@article{DupireLV,
    author = {Bruno Dupire},
    title = {Pricing with a Smile},
    journal = {Risk},
    year = {1994},
volume = {7},
number = {1}, 
page = {18-20}
}

@article{DupireFITO,
author = {Bruno Dupire},
title = {\textnormal{Functional Itô calculus}},
journal = {SSRN},
year  = {2009},
note={Republished in \textit{Quantitative Finance, 19(5):721–729, 2019}}
}

@article{FlemingViot,
 author = {Wendell H. Fleming and MICHEL Viot},
 journal = {Indiana University Mathematics Journal},
 number = {5},
 pages = {817--843},
 publisher = {Indiana University Mathematics Department},
 title = {Some Measure-Valued {M}arkov Processes in Population Genetics Theory},
 volume = {28},
 year = {1979}
}

@article{FlemingVermes,
  title={Generalized Solutions in the Optimal Control of Diffusions},
  author={Wendell H. Fleming and Domokos Vermes},
  year={1988},
publisher = {Springer New York},
journal = {stochastic control theory and applications (Minneapolis, Minn., 1986)}, 
pages = {119--127}
}

@article{GatheralKeller,
author = {Gatheral, J. and Keller-Ressel, M.},
title = {Affine forward variance models},
volume = {23},
journal = {Finance and Stochastics},
number = {1},
publisher = {Springer},
pages = {501 -- 533},
year = {2019}
}

@article{GemanHorowitz,
author = {Donald Geman and Joseph Horowitz},
title = {{Occupation Densities}},
volume = {8},
journal = {The Annals of Probability},
number = {1},
publisher = {Institute of Mathematical Statistics},
pages = {1 -- 67},
year = {1980}
}

@book{GuyonHL,
  title={Nonlinear Option Pricing},
  author={Guyon, J. and Henry-Labordère, P.},
  series={Chapman and Hall/CRC Financial Mathematics Series},
  year={2013},
  publisher={CRC Press}
}

@article{Guyon2014, 
author = {Guyon, Julien},
title = {Path-dependent volatility}, 
journal = {Risk},
year = {2014}}

@article{GuyonSlides, 
author = {Guyon, Julien},
title = {Path-dependent volatility: practical examples}, 
journal = {Global Derivatives Conference},
year = {2017}}

@article{GuyonLekeufack,
author = {Julien Guyon and Jordan Lekeufack},
title = {Volatility is (mostly) path-dependent},
journal = {Quantitative Finance},
volume = {23},
number = {9},
pages = {1221-1258},
year = {2023},
publisher = {Routledge}
}

@article{haberWilmott,
  title={Pricing Parisian options},
  author={Haber, Stefan and Sch{\"o}nbucher, Peter J and Wilmott, Paul},
  journal={The Journal of Derivatives},
  volume={6},
  number={3},
  pages={71--79},
  year={1999},
  publisher={Institutional Investor Inc.}
}

@article{Hanin,
author="Leonid G. Hanin",
title="An extension of the Kantorovich norm",
journal="Contemporary Mathematics",
ISSN="0271-4132",
publisher="American Mathematical Society",
year="1999",
volume="226",
pages="113-130"
}

@Article{HobsonRogers,
  author={David G. Hobson and L. C. G. Rogers},
  title={{Complete Models with Stochastic Volatility}},
  journal={Mathematical Finance},
  year=1998,
  volume={8},
  number={1},
  pages={27-48},
  keywords={},
}

@article{HuangTissotZhang,
      title={Deep {BSDE} Solver for Occupied Pricing {PDE}s}, 
      author={Yuxing Huang and Valentin Tissot-Daguette and Xin Zhang},
      journal={In preparation}
}

@article{Hugonnier,
author = {Hugonnier, Julien-N.},
title = {THE {F}EYNMAN–{K}AC FORMULA AND PRICING OCCUPATION TIME DERIVATIVES},
journal = {International Journal of Theoretical and Applied Finance},
volume = {02},
number = {02},
pages = {153-178},
year = {1999}
}

@article{LS,
  title={Valuing {A}merican options by simulation: a simple least-squares approach},
  author={Longstaff, Francis A and Schwartz, Eduardo S},
  journal={The Review of Financial Studies},
  volume={14},
  number={1},
  pages={113--147},
  year={2001},
  publisher={Oxford University Press}
}

@article{Larsson,
  author    = {Alexander M. G. Cox and Sigrid Källblad and Martin Larsson and Sara Svaluto-Ferro},
  title     = {Controlled measure-valued martingales: A viscosity solution approach},
  journal   = {The Annals of Applied Probability},
  volume    = {34},
  number    = {2},
  pages     = {1987--2035},
  year      = {2024},
  month     = {04},
  publisher = {Institute of Mathematical Statistics}
}

@article{LasryLions,
author = {Lasry, Jean-Michel and Lions, Pierre-Louis},
year = {2007},
month = {03},
pages = {229-260},
title = {Mean Field Games},
volume = {2},
journal = {Japanese Journal of Mathematics}
}

@article{Lee,
    author = {Roger Lee},
    title = {Weighted Variance Swap},
    journal = {Encyclopedia of Quantitative Finance},
    year = {2010},
    publisher = {Wiley}
}

@article{BNP,
    author = {Lieberman, Paul and Omprakash, Anand},
    title = {Corridor Variance Swaps: A Cheaper Way to Buy Volatility?},
note = {White paper},
    journal = {BNP Paribas, U.S. Equities and Derivatives Strategy},
    year = {2007}
}

@article{PengWang,
author = {Peng, Shige and Wang, Falei},
year = {2011},
month = {08},
pages = {},
title = {{BSDE}, path-dependent {PDE} and nonlinear {F}eynman-{K}ac formula},
journal = {Science China Mathematics}
}

@article{PeskirShiryaev,
author = {Peskir, G. and Shiryaev, A.},
year = {2006},
chapter = {1},
title = {Optimal Stopping and Free-Boundary Problems},
journal = {Basel: Birkhäuser Verlag}
}

@book{RevuzYor,
  title={Continuous Martingales and {B}rownian Motion},
  author={Daniel Revuz and Marc Yor},
  year={1999},
  publisher={Springer Berlin, Heidelberg}
}

@misc{SocGen,
      title={{SG CIB} launches timer options}, 
      author={Nick Sawyer},
      year={2007},
      journal = {Risk},
howpublished={\url{https://www.risk.net/derivatives/structured-products/1506473/sg-cib-launches-timer-options}},
note = {\textit{Risk}. Accessed 10/25/2023.}
}

@article{Tankov,
author = {Bouveret, G\'{e}raldine and Dumitrescu, Roxana and Tankov, Peter},
title = {Mean-Field Games of Optimal Stopping: A Relaxed Solution Approach},
journal = {SIAM Journal on Control and Optimization},
volume = {58},
number = {4},
pages = {1795-1821},
year = {2020}
}

@article{TissotLOV,
      title={Pricing with Passion: The Local Occupied Volatility ({LOV}) Model}, 
      author={Valentin Tissot-Daguette},
      journal={Submitted},
year = {2026}
}

@article{TTZ,
author = {Talbi, Mehdi and Touzi, Nizar and Zhang, Jianfeng},
title = {Dynamic Programming Equation for the Mean Field Optimal Stopping Problem},
journal = {SIAM Journal on Control and Optimization},
volume = {61},
number = {4},
pages = {2140-2164},
year = {2023}
}

@article{TTZ2,
author = {Talbi, Mehdi and Touzi, Nizar and Zhang, Jianfeng},
title = {Viscosity Solutions for Obstacle Problems on Wasserstein Space},
journal = {SIAM Journal on Control and Optimization},
volume = {61},
number = {3},
pages = {1712-1736},
year = {2023}
}

@article{ViensZhang,
author = {Frederi Viens and Jianfeng Zhang},
title = {{A martingale approach for fractional Brownian motions and related path dependent {PDE}s}},
volume = {29},
journal = {The Annals of Applied Probability},
number = {6},
publisher = {Institute of Mathematical Statistics},
pages = {3489 -- 3540},
keywords = {fractional Brownian motion, functional Itô formula, Monte Carlo methods, path dependent PDEs, Rough volatility, Time inconsistency, Volterra SDE},
year = {2019}
}

@article{ZhangEkren2014,
author = {Ibrahim Ekren and Christian Keller and Nizar Touzi and Jianfeng Zhang},
title = {{On viscosity solutions of path dependent PDEs}},
volume = {42},
journal = {The Annals of Probability},
number = {1},
publisher = {Institute of Mathematical Statistics},
pages = {204 -- 236},
keywords = {Backward SDEs, Comparison principle, functional Itô formula, path dependent PDEs, viscosity solutions},
year = {2014}
}

@book{ZhangBook,
  title={Backward Stochastic Differential Equations: From Linear to Fully Nonlinear Theory},
  author={Zhang, J.},
  series={Probability Theory and Stochastic Modelling},
  year={2017},
  publisher={Springer New York}
}

@article{SonerTissotZhang,
      title={Controlled Occupied Processes and Viscosity Solutions}, 
      author={H. Mete Soner and Valentin Tissot-Daguette and Jianfeng Zhang},
      year={2024},
      journal ={arXiv:2411.12080}, 
}

@article{HanJentzenE1,
    author =  {W. E and J. Han and A. Jentzen },
    title = {Solving high-dimensional partial differential equations using deep learning},
    journal = {Proc. Natl. Acad. Sci.},
    year = {2018}, 
volume = {115}, 
number = {34}
}

@article{HanJentzenE2,
    author = {W. E and J. Han and A. Jentzen },
    title = {Deep Learning-Based Numerical Methods for High-Dimensional Parabolic Partial Differential Equations and Backward Stochastic Differential Equations},
    journal = {Commun. Math. Stat.},
    year = {2017}, 
volume = {5}
}

@article{ChassagneuxPages,
      title={Computing the invariant distribution of McKean-Vlasov SDEs by ergodic simulation}, 
      author={Jean-François Chassagneux and Gilles Pagès},
      year={2025},
      journal={arXiv:2406.13370}
}

@article{ContFournie,
title = {Change of variable formulas for non-anticipative functionals on path space},
journal = {Journal of Functional Analysis},
volume = {259},
number = {4},
pages = {1043-1072},
year = {2010},
author = {Rama Cont and David-Antoine Fournié}
}

@incollection{ContBally,
    author = {Rama Cont},
editor = {Vlad Bally and Lucia Caramellino and Rama Cont},
    title = {Functional {I}t{\^o} calculus and functional {K}olmogorov equations},
    booktitle = {Stochastic Integration by Parts and Functional {I}t{\^o} Calculus},
    publisher = {Advanced Courses in Mathematics - CRM Barcelona, Birkauser},
    year = {2016}
}

@article{Benaim2,
  author    = {Michel Bena{\"i}m and Olivier Raimond},
  title     = {Self-interacting diffusions {II}: convergence in law},
  journal   = {Annales de l'Institut Henri Poincare (B) Probability and Statistics},
  volume    = {39},
  number    = {6},
  pages     = {1043--1055},
  year      = {2003},
}

@article{Benaim3,
  author    = {Michel Bena{\"i}m and Olivier Raimond},
  title     = {Self-interacting diffusions. {III}. Symmetric interactions},
  journal   = {The Annals of Probability},
  volume    = {33},
  number    = {5},
  pages     = {1716--1759},
  year      = {2005},
}

@article{Benaim4,
  author    = {Michel Bena{\"i}m and Olivier Raimond},
  title     = {Self-Interacting Diffusions {IV}: Rate of Convergence},
  journal   = {Electronic Journal of Probability},
  volume    = {16},
  pages     = {1815--1843},
  year      = {2011},
}

@article{Benaim1,
  author    = {Michel Bena{\"i}m and Michel Ledoux and Olivier Raimond},
  title     = {Self-interacting diffusions},
  journal   = {Probability Theory and Related Fields},
  volume    = {122},
  pages     = {1--41},
  year      = {2002},
}

@phdthesis{TissotThesis,
  author    = {Valentin Tissot-Daguette},
  title     = {Free boundaries, functional expansions, and occupied processes},
  school    = {Princeton University},
  year      = {2024},
  note      = {Available at \href{https://about.proquest.com/en/products-services/pqdtglobal/}{ProQuest Dissertations \& Theses Global}},
}

@article{SaporitoPPDE,
author = {Saporito, Yuri F. and Zhang, Zhaoyu},
title = {Path-Dependent Deep {G}alerkin Method: A Neural Network Approach to Solve Path-Dependent Partial Differential Equations},
journal = {SIAM Journal on Financial Mathematics},
volume = {12},
number = {3},
pages = {912-940},
year = {2021}
}

@incollection{dupire2004unified,
  title={A unified theory of volatility},
  author={Dupire, Bruno},
  booktitle={Derivatives Pricing: The Classic Collection},
  editor={Carr, Peter},
  publisher={Risk Books},
  address={London},
  year={2004},
  note={Reprint of a 1996 working paper from Paribas Capital Markets}
}

@article{hure2020deep,
  title={Deep backward schemes for high-dimensional nonlinear PDEs},
  author={Hur{\'e}, C{\^o}me and Pham, Huy{\^e}n and Warin, Xavier},
  journal={Mathematics of Computation},
  volume={89},
  number={324},
  pages={1785--1821},
  year={2020},
  publisher={American Mathematical Society},
  doi={10.1090/mcom/3514}
}

@incollection{Friz2023forest,
  author = {Friz, Peter K. and Gatheral, Jim},
  title = {The forest expansion of forward variance models},
  booktitle = {Rough Volatility},
  editor = {Bayer, Christian and Friz, Peter K. and Fukasawa, Masaaki and Gatheral, Jim and Jacquier, Antoine and Rosenbaum, Mathieu},
  publisher = {SIAM},
  year = {2023},
  pages = {183--198},
  doi = {10.1137/1.9781611977783.ch9},
  url = {https://epubs.siam.org/doi/10.1137/1.9781611977783.ch9}
}

@book{FlemingSoner,
  title={Controlled Markov Processes and Viscosity Solutions},
  author={Fleming, Wendell H and Soner, H Mete},
  edition={2},
  series={Stochastic Modelling and Applied Probability},
  volume={25},
  year={2006},
  publisher={Springer}
}

@article{JackCurve,
author = {Jacquier, Antoine and Oumgari, Mugad},
title = {Deep Curve-Dependent {PDE}s for Affine Rough Volatility},
journal = {SIAM Journal on Financial Mathematics},
volume = {14},
number = {2},
pages = {353-382},
year = {2023}
}

@article{Bethencourt,
author = {B\'{e}thencourt, Lo\"{\i}c and Catellier, R\'{e}mi and Tanr\'{e}, Etienne},
title = {Brownian Particles Controlled by Their Occupation Measure},
journal = {SIAM Journal on Control and Optimization},
volume = {63},
number = {2},
pages = {1286-1313},
year = {2025}
}

@article{CranstonLeJan,
  author = {Cranston, M. and Le Jan, Y.},
  title = {Self attracting diffusions: Two case studies},
  journal = {Mathematische Annalen},
  volume = {303},
  pages = {87--93},
  year = {1995},
  publisher = {Springer},
  doi = {10.1007/BF01460980}
}

@article{Raimond,
  author = {Raimond, Olivier},
  title = {Self-attracting diffusions: Case of the constant interaction},
  journal = {Probability Theory and Related Fields},
  volume = {107},
  pages = {177--196},
  year = {1997},
  publisher = {Springer}
}

@article{DuJiangLi,
	year = {2023},
	month = aug,
	publisher = {Bernoulli Society for Mathematical Statistics and Probability},
	volume = {29},
	number = {3},
	pages = {2492--2518},
	author = {Kai Du and Yifan Jiang and Jinfeng Li},
	title = {Empirical approximation to invariant measures for {McKean--Vlasov} processes: Mean-field interaction vs self-interaction},
	journal = {Bernoulli}
}

@article{DurrettRogers,
  title={Asymptotic behavior of {B}rownian polymers},
  author={Durrett, Richard and Rogers, L.C.G.},
  journal={Probability Theory and Related Fields},
  volume={92},
  number={3},
  pages={337--349},
  year={1992},
  publisher={Springer}
}

@article{tomaševićFungus,
  title={Ergodic behaviour of a multi-type growth-fragmentation process modelling the mycelial network of a filamentous fungus},
  author={Toma{\v{s}}evi{\'c}, Milica and Bansaye, Vincent and V{\'e}ber, Amandine},
  journal={ESAIM: Probability and Statistics},
  volume={26},
  pages={397--435},
  year={2022}
}

@article{BoswellDavidson,
title = {Modelling hyphal networks},
journal = {Fungal Biology Reviews},
volume = {26},
number = {1},
pages = {30-38},
year = {2012},
note = {Hyphal networks: mechanisms, modelling and ecology},
issn = {1749-4613},
author = {Graeme P. Boswell and Fordyce A. Davidson}
}
\end{document}